\documentclass[10pt, reqno]{amsart}
\usepackage{amssymb}
\usepackage{amsmath}
\usepackage{enumitem}
\usepackage{comment}
\usepackage{breqn}
\usepackage{geometry}
\usepackage{cleveref}
\usepackage{textcase}
\usepackage{esint}
\usepackage[giveninits=true,isbn=false, url=false, doi=false, backend=biber,style=numeric,maxnames=99,minnames=99]{biblatex}
\addbibresource{ref.bib}

\DeclareFieldFormat[article,incollection]{title}{\mkbibemph{#1}}
\renewbibmacro{in:}{%
  \ifentrytype{article}{}{\printtext{\bibstring{in}\intitlepunct}}%
}
\setlength{\bibitemsep}{\baselineskip}
\newtheorem{thm}{Theorem}[section]

\newtheorem{lem}[thm]{Lemma}

\pagestyle{plain}

\theoremstyle{definition}
\newtheorem{definition}[thm]{Definition}

\newtheorem{claim}{Claim}

\theoremstyle{remark}
\newtheorem{remark}[thm]{Remark}
\newtheorem*{remark*}{Remark}
\newenvironment{claimproof}{\proof[Proof of Claim]}{\endproof}

\newcommand{\sd}{d}
\newcommand{\R}{\mathbb{R}}
\newcommand{\N}{\mathbb{N}}
\newcommand{\I}{\mathcal{I}}

\newcommand{\D}{\mathcal{D}}

\newcommand{\Hold}[1]{C^{#1,\frac{#1}{2}}(\bar{Q}_T)}
\newcommand{\Camp}[2]{L^{#1,#2}_C(Q_T)}
\newcommand{\Mor}[2]{L^{#1,#2}_M(Q_T)}
\newcommand{\nd}{\nabla_{\nu}}

\newcommand{\appuc}{u_C^{(\varepsilon)}}
\newcommand{\appun}{u_N^{(\varepsilon)}}
\numberwithin{equation}{section}
\numberwithin{claim}{section}
\usepackage{xcolor}
\usepackage[foot]{amsaddr}

\title[on a cross-diffusion hybrid model]{On a cross-diffusion hybrid model: Cancer Invasion Tissue with Normal Cell Involved}
\author[Guanjun Pan and Hong-Ming Yin]{Guanjun Pan and Hong-Ming Yin\\Department of Mathematics and Statistics\\ Washington State University\\ Pullman, WA 99164, USA.}

\email{guanjun.pan@wsu.edu and hyin@wsu.edu}

\begin{document}

\maketitle

\begin{abstract}
    In this paper, we study a well-posedness problem on a new mathematical model for cancer invasion within the plasminogen activation system, which explicitly incorporates cooperation with host normal cells. Key biological mechanisms—including chemotaxis, haptotaxis, recruitment, logistic growth, and natural degradation of normal cells—along with other primary components (cancer cells, vitronectin, uPA, uPAI-1 and plasmin) are modeled via a continuum framework of cancer cell invasion of the extracellular matrix. The resulting model constitutes a strongly coupled, cross-diffusion hybrid system of differential equations. The primary mathematical challenges arise from the strongly coupled cross-diffusion terms, the parabolic operators of divergence form, and the interaction between the cross-diffusion fluxes and the ODE components. We address these by deriving several a priori estimates for dimensions $\sd\leqslant 3$. Subsequently, we employ a decoupling strategy to split the system into proper sub-problems, establishing the existence (and uniqueness) for each subsystem. Finally, we demonstrate the global existence and uniqueness of the solution for dimensions $\sd\leqslant2$ and the global existence of a solution for dimension $\sd = 3$.
\end{abstract}

\noindent
\textbf{Keywords:} Cancer invasion, Plasminogen activation system, Cross-diffusion system, Well-posedness.
\textbf{AMS Subject Classification (2020):} 35K57, 92C17.

\section{Introduction}
According to the World Health Organization (WHO), cancer is one of the most deadly diseases. Meanwhile, around 90 percent of the death caused by cancer is due to the metastasis, see~\cite{chaffer_perspective_2011}, which refers to spreading process of cancer cells from the primary tumor site to other parts of human body. Hence, it is important to understand the mechanism of metastasis. Before cancer cells move through the blood or lymphatic vessels, they invade into the surrounding connective tissues first. An early critical step in cancer cell invasion is the remodeling of the extracellular matrix of the surrounding tissues by the over-expression of proteolytic enzymes. Andasari--Gerisch--Lolas--South--Chaplain had built up a mathematical model of an interacting system of one type of proteolytic enzymes, known as urokinase-type plasminogen activator (uPA) system~\cite{andasari_mathematical_2011}. The major components in uPA system are cancer cells acting as urokinase plasminogen activator receptors (uPAR), plasmin, uPA, vitronectin (VN, one kind of extracellular matrix protein) and plasminogen activator inhibitor type-1 (PAI-1). Meanwhile, they introduced a diffusion-taxis-reaction model to depict the interactions between those components. For more biological details, one can find them in ~\cite{chaplain_mathematical_2005}. 

The model in this paper is an extension of the diffusion-taxis-reaction model of five components uPA system in \cite{andasari_mathematical_2011} by adding interaction of normal cells with other components. Under this new system, normal cells interact with cancer cells, such as leukocytes, stromal cells~\cite{burger2006cxcr4}, macrophages (tumor-associated macrophages~\cite{pan2020tumor}) and fibroblast (cancer-associated fibroblast~\cite{yang_cancer-associated_2023}), etc. We consider an advection of normal cells towards high concentration of cancer cells~\cite{hanahan_accessories_2012} and an advection of cancer cells towards normal cells~\cite{pittet_clinical_2022}. Normal cells perform haptotaxis\footnote{Haptotaxis is the movement of cancer or normal cells driven by the proteins in ECM.} by moving along a vitronectin adhesion gradient~\cite{carter_haptotaxis_1967}. Although there is lack of experimental evidences on the existence of uPAR on a healthy tissue~\cite{baart2020molecular}, for mathematical modelling symmetry, we assume normal cells perform chemotaxis\footnote{Chemotaxis is the movement of cancer or normal cells driven by chemical molecules} by moving along a concentration gradient of uPA and PAI-1 with rates of $\chi_{22}$ and $\chi_{23}$ with no restriction on whether these two coefficients can be zero. An increment of cancer cells density~\cite{kamdje2020tumor} and a decrement of normal cell density is considered due to contact with normal cells~\cite{gatenby1996reaction}. To describe the mathematical model, we will name the variables of each component distinguished by the sub-index:
\begin{align*}
    u_C(x,t)&=\text{cancer cell density at location $x$ and time $t$};\\
    u_N(x,t)&=\text{normal cell density at location $x$ and time $t$};\\
    u_V(x,t)&=\text{vitronectin concentration at location $x$ and time $t$};\\
    u_A(x,t)&=\text{uPA concentration at location $x$ and time $t$};\\
    u_I(x,t)&=\text{PAI-1 concentration at location $x$ and time $t$};\\
    u_P(x,t)&=\text{plasmin concentration at location $x$ and time $t$}.
\end{align*}

Due to the biological facts above and the five components model in \cite{andasari_mathematical_2011}, we extend the model into the following system of hybrid reaction-diffusion and cross diffusion equations:
\begin{multline}
    \label{equ:1}
    \partial_t u_C = \nabla\cdot(d_C(x,t)\nabla u_C)-\nabla\cdot [B(u_C)\left(\chi_{11}\nabla u_N +\chi_{12}\nabla u_A + \chi_{13}\nabla u_I + \chi_{14}\nabla u_V\right)] 
    \\+ \alpha_{11}u_N u_C + \mu_C u_C (1-\frac{u_C}{K_C})-\delta_C u_C;
\end{multline}
\begin{multline}
    \label{equ:2} 
    \partial_t u_N = \nabla\cdot(d_N(x,t)\nabla u_N)-\nabla\cdot [B(u_N)\left(\chi_{21}\nabla u_C + \chi_{22}\nabla u_A + \chi_{23}\nabla u_I + \chi_{24}\nabla u_V\right)]\\ -\alpha_{21} u_N u_C + \mu_N u_N (1-\frac{u_N}{K_N})-\delta_N u_N ;
\end{multline}
\begin{align}
    \label{equ:3}
    \partial_t u_V& = - \alpha_{31}u_V u_P - \alpha_{32}u_V u_I + \alpha_{33}u_A u_I + \mu_V u_V(1-\frac{u_V}{K_V}); \\ 
    \label{equ:4}
    \partial_t u_A& = \nabla\cdot(d_A(x,t)\nabla u_A) - \alpha_{41}u_A u_I -\alpha_{42}u_A u_C + \mu_A u_C;\\ 
    \label{equ:5}
    \partial_t u_I& = \nabla\cdot(d_I(x,t)\nabla u_I) - \alpha_{51}u_I u_A - \alpha_{52}u_I u_V + \mu_I u_P ;\\
    \label{equ:6}
    \partial_t u_P& = \nabla\cdot(d_P(x,t)\nabla u_P) +\alpha_{61} u_C u_A+ \alpha_{62}u_V u_I - \delta_P u_P,
\end{align}
where $\partial_t$ stands for time partial derivative, $\nabla:=(\frac{\partial}{\partial x_1},\frac{\partial}{\partial x_2},\cdots,\frac{\partial}{\partial x_d})$ stands for $d$-dimensional spatial partial derivative and the function $B(u)$ is a bounded function of $u$ with detailed definition in~\eqref{hypo:bound}.

For the biological meaning of some coefficients, one can find detailed illustration in \cite{andasari_mathematical_2011,chaplain_mathematical_2005}, and we will briefly introduce them as following: 
\begin{enumerate}
    \item $d_i(x,t),\forall i \in \{C,N,A,I,P\}$, stands for the diffusion coefficients.
    \item $\chi_{1i},\chi_{2i},\forall i \in \{1,\cdots, 4\}$, stands for the drifting coefficients, which means absolute drifting effect.
    \item $\alpha_{ij}$ stands for the interaction reaction coefficients between two components.
    \item $\mu_i,\forall i\in \{C,N,V,A,I\}$, stands for the proliferation rate.
    \item $K_i,\forall i\in \{C,N,V\}$, stands for the carrying capacity of cancer cells, normal cells and vitronectin, respectively.
    \item $\delta_i,\forall i\in \{C,N,P\}$, stands for the degradation rate.
\end{enumerate}

To complete the whole mathematical model, we assume the system \eqref{equ:1}-\eqref{equ:6} holds in the spatio-temporal domain, $Q_T:=\{(x,t):x\in \Omega \subset \R^\sd, t\in (0,T)\}$, for any fixed $T>0$, where $\Omega$ is a bounded domain of $\R^\sd$ with $C^2$-boundary $\partial \Omega$ and $\sd\in \N$ is the Euclidean dimension. Also, we assume the model equipping with zero-flux boundary conditions:
\begin{equation}\label{bc}
     \nabla_{\vec{\nu}} u_i(x,t) = 0,\;\forall (x,t)\in\partial \Omega \times (0,T), \forall i \in \{C,N,A,I,P\},
\end{equation}
where $\vec{\nu}$ stands for the unit outward normal vector along the boundary of the domain, $\partial \Omega$; and known-data initial conditions:
\begin{multline}\label{ic}
    (u_C(x,0),u_N(x,0),u_V(x,0),u_A(x,0),u_I(x,0),u_P(x,0))\\
    = (u_{C_0}(x),u_{N_0}(x),u_{V_0}(x),u_{A_0}(x),u_{I_0}(x),u_{P_0}(x)),\;\forall x \in \Omega.
\end{multline}

More hypotheses and assumptions on known data, including coefficients and initial conditions will be stated in Section 2.

In the past decades, the mathematical modeling and analysis for cancer evolution, invasion, and metastasis has attracted many researchers~\cite{chaplain_mathematical_2005,franssen2019mathematical,hillen2024modelling,lowengrub2009nonlinear, suzuki2017mathematical}. We provide a short review on those known well-posedness results for models closely related to our model. Tao, in 2011, studied a 2D model featuring ECM remodeling and a single haptotactic cross-diffusion term, proving conditional global existence and uniqueness~\cite{tao2011global}. Giesselmann--Kolbe--Lukacova-Medvidova--Sfakianakis, in 2018, investigated a 2D haptotaxis model with multiple cell components, also establishing a conditional global existence of classical solutions~\cite{giesselmann2017existence}. Subsequently, Jin, in 2020, studied a similar 2D-haptotaxis model that notably excluded the ECM self-remodeling effect, successfully proving the global existence and uniqueness of a uniformly bounded classical solution~\cite{jin2020global}. Desvillettes--Giunta, in 2021, proved the existence and uniqueness of a global weak solution of a 1D chemotaxis model and a global classical solution under smoothness assumption of initial data~\cite{desvillettes2021existence}. For a more general case, Choquet--Rosier--Rosier, in 2021, established a conditional existence for spatial dimension $\sd\leqslant 3$ and uniqueness for spatial dimension $\sd\leqslant 2$ of a general cross-diffusion system under a non-degenerated self-cross diffusion assumption~\cite{choquet_well_2021}. However, due to non-constant diffusion coefficients and coupled cross-diffusion on multiple components, these existing results do not cover the system~\eqref{equ:1} -~\eqref{equ:6}. This motivates us to consider the global existence and uniqueness problem. A crucial tool used in this paper is the estimate in Morrey-John-Nirenberg-Campanato space, which is developed by Yin in~\cite{yin1992l2}. Various apriori estimates are also the keys for the existence proof.

This paper is organized as following: In section 2, we recall some function spaces which are used in subsequence analysis, illustrated the hypothesis on the known data and state the main results; In section 3, we derive some useful apriori estimates. In section 4, we prove theorem \ref{exist} by two important lemmas. In section 5, we prove~\cref{exist_3d} showing the existence of a solution when dimension $\sd=3$. In section 6, we present the conclusion and discuss some open questions.

Throughout the paper, we shall use $C$ to denote various constants, which is only dependent on the known data and terminal time marker $T$, and whose value might differ at each occurrence. The dependence of $C$ on known data and $T$ will be shown by the end.

\section{Preliminaries and Main Results}
For reader's convenience, we recall some function spaces which is standard for studying elliptic and parabolic partial differential equations (see~\cite{evans,Ladyzen}).

For $T=\infty$, we define $Q_T:=\Omega\times(0,T)$, as $Q := \Omega \times (0,\infty)$. And $\bar{Q}_T$ means the closure of $Q_T$.

We recall several Banach spaces which will be frequently used later:

For $p\in [1,\infty)\cup \{\infty\}$ and some subset $D$ of $\Omega$ or $Q_T$, $L^p(D)$ is a Banach space of measurable functions on $D$, where its norm is defined as: for all $u\in L^p(D)$,
\[\|u\|_{L^p(D)} =\begin{cases}\displaystyle
    \big(\int_{D}u^p dx \big)^{\frac{1}{p}},\;\forall p \in[1,\infty);\\
    ess.\sup_{(x,t)\in D}\{|u(x,t)|\},\; p = \infty,
\end{cases} \]
$L^p(D):=\{u:\|u\|_{L^p(D)}< \infty\}$. When $p=2$, another notation, $H(D)$ will be used for $L^2(D)$.

We denote $H^1(\Omega)$ as the Sobolev space, $W^{1,2}(\Omega)$, whose norm is defined as:
\[\|u\|_{H^1(\Omega)} = (\int_{\Omega} |\nabla u|^2 dx)^{\frac{1}{2}}.\]
We also denote its dual space as $H^*(\Omega)$ and $<\cdot,\cdot>_H$ as the dual pairing between $H^*(\Omega)$ and $H^1(\Omega)$ and define its norm as:
\[\|u\|_{H^*(\Omega)}:=\sup\{<u,f>_H:f\in H^1(\Omega),\|f\|_{H^1(\Omega)}\leqslant 1\}.\]

$C^{\alpha,\frac{\alpha}{2}}(Q_T)$ with $\alpha\in (0,1)$ is the $\alpha$-H\"older continuous function space on $Q_T$, where the H\"older seminorm is defined as following:
\[[u]_{\alpha}:= \sup_{\substack{(x_1,t_1),(x_2,t_2)\in Q_T \\ (x_1,t_1)\neq(x_2,t_2)}}\frac{|u(x_1,t_1) -u(x_2,t_2)|}{(x_1-x_2)^{\alpha}+(t_1-t_2)^{\frac{\alpha}{2}}}\]
which is also a Banach space equipped with the norm as: 
\[\|u\|_{C^{\alpha,\frac{\alpha}{2}}(Q_T)} = \|u\|_{L^{\infty}(Q_T)} + [u]_{\alpha}, \;\forall u \in C^{\alpha,\frac{\alpha}{2}}(Q_T).\]

$L^2(0,T;H^1(\Omega))$ is a Banach space of measurable mappings $u:(0,T)\rightarrow H^1(\Omega)$, where its norm is defined as: $\forall u \in {L^2(0,T;H^1(\Omega))}$,
\[\|u\|_{L^2(0,T;H^1(\Omega))} = \big\| \|u(\cdot,t)\|_{H^1(\Omega)}\big\|_{L^2(0,T)} = \Big(\int_0^T \|u(\cdot,t)\|^2_{H^1(\Omega)}dt\Big)^{\frac{1}{2}}.\] 

$V_2(Q_T)$ has the same definition as it in~\cite{Ladyzen}, which is a Banach space consists of all elements of $L^2(0,T;H^1(\Omega))$ with finite norm:\[\|u\|_{V_2(Q_T)}:=\sup_{0< t <T}\|u\|_{L^2(\Omega)} + \|\nabla u\|_{L^2(Q_T)},\;\forall u\in V_2(Q_T).\]

We define the distance between two points $z_1 := (x_1,t_1)$ and $z_2 :=(x_2,t_2)$ in $Q_T$ by
\[d(z_1,z_2) := \max\{|x_1-x_2|,|t_1-t_2|^{\frac{1}{2}}\}.\]

Denote $B_r(x_0) := \{x\in \R^n:|x-x_0|<r\}$ and $Q_r(x_0,t_0) := B_r(x_0)\times (t_0-r^2,t_0].$

For a measurable set $A\subset \R^n\times[0,T]$
\[\fint_{A} u\, dxdt: = \frac{1}{|A|} \int_{A} u\, dxdt\]

In particular, when $A = {Q_T\cap Q_r(x_0,t_0)}$,
\[[u]_{(x_0,t_0)}:= \fint_{Q_T\cap Q_r(x_0,t_0)} u\, dxdt\] which is measurable and whose measure is denoted by $|A|$ and for $\mu>0$,
\[[u]^2_{2,\mu,Q_T}:= \sup_{(x_0,t_0)\in Q_T,r>0}r^{-\mu}\int_{A}\left|u-[u]_{(x_0,t_0)}\right|^2 dxdt\]

The Camponato spaces, denoted $\Camp{2}{\mu}$, is the space consisting of all functions in $L^2(Q_T)$ such that a seminorm $[u]_{2,\mu,Q_T}<\infty$ and its norm is define by 
\[\|u\|_{\Camp{2}{\mu}} = \left(\|u\|^2_{L^2(Q_T)}+ [u]^2_{2,\mu,Q_T}\right)^{\frac{1}{2}}.\]
\begin{remark*}
    A well-known fact is that Campanato space is equivalent to Morrey space if $0\leqslant \mu < d+2$. We denote $\Mor{2}{\mu}$ for Morrey space with its corresponding norm denoted as $\|\cdot\|_{\Mor{2}{\mu}}$.
\end{remark*}

For notation simplicity, we denote the product space by the exponential fashion, e.g. suppose $X$ is a Banach space, then $X^p$ is a $p$ dimensional product space.

Before we state the main result, we shall start with the definition of a weak solution.
\begin{definition}\label{weak}
    A non-negative $(u_C,u_N,u_V,u_A,u_I,u_P)$ is called a non-negative weak solution to the system~\eqref{equ:1} -~\eqref{equ:6} on $Q_T:=\Omega\times (0,T)$ with boundary conditions~\eqref{bc} and initial conditions~\eqref{ic} if for any $(u_C,u_N,u_V,u_A,u_I,u_P)$,
    \[(u_C,u_N,u_V,u_A,u_I,u_P)\in (V_2(Q_T)\cap L^3(Q_T))^2 \times {(V_2(Q_T)\cap L^{\infty}(Q_T))}^4;\]
     and for all test functions $\phi\in L^2(0,T;H^1(\Omega))\cap L^3(Q_T)$ with $\phi(x,T)=0$, and denote the drifting vector $\vec{W}_C = \chi_{11} \nabla u_N +\chi_{12}\nabla u_A + \chi_{13}\nabla u_I + \chi_{14}\nabla u_V$ and $\vec{W}_N = \chi_{21}\nabla u_C + \chi_{22}\nabla u_A + \chi_{23}\nabla u_I + \chi_{24}\nabla u_V$ for notation simplicity, it satisfies following integral identities:
    \begin{multline}
    \iint_{Q_T}  - u_C \partial_t\phi \,dxdt -\int_{\Omega}u_{C_0}\phi(x,0)\,dx + \iint_{Q_T} d_C(x,t)\nabla u_C \cdot \nabla \phi \,dxdt \\= \iint_{Q_T}B(u_C)\vec{W}_C  \cdot \nabla\phi \,dxdt
    +\iint_{Q_T}\alpha_{11}u_N u_C \phi \,dxdt + \iint_{Q_T} \mu_C u_C \phi \,dx \\- \iint_{Q_T}\frac{\mu_C}{K_C}u_C^2\phi \,dxdt - \iint_{Q_T}\delta_C u_C\phi \,dxdt
    \end{multline}
    \begin{multline}
        \iint_{Q_T}  -u_N \partial_t \phi \,dx dt -\int_{\Omega}u_{N_0}\phi(x,0)\,dx + \iint_{Q_T} d_N(x,t)\nabla u_N \cdot \nabla \phi \,dxdt \\= \iint_{Q_T} B(u_N)\vec{W}_N  \cdot \nabla\phi \,dxdt
        -\iint_{Q_T}\alpha_{21}u_N u_C \phi \,dxdt + \iint_{Q_T} \mu_N u_N \phi \,dx\\ - \iint_{Q_T}\frac{\mu_N}{K_N}u_N^2\phi \,dxdt - \iint_{Q_T}\delta_N u_N\phi \,dxdt
    \end{multline}
    \begin{multline}
        \iint_{Q_T}- u_V \partial_t \phi \,dx dt -\int_{\Omega}u_{V_0}\phi(x,0)\,dx \\= \iint_{Q_T} (- \alpha_{31}u_V u_P -\alpha_{32} u_V u_I + \alpha_{33}u_A u_I + \mu_V u_V(1-\frac{u_V}{K_V}) )\phi \,dxdt
    \end{multline}
    \begin{multline}
        \iint_{Q_T} -u_A  \partial_t \phi \,dxdt -\int_{\Omega}u_{A_0}\phi(x,0)\,dx+\iint_{Q_T}d_A(x,t)\nabla u_A \cdot \nabla \phi \,dxdt\\
        = \iint_{Q_T}(-\alpha_{41} u_A u_I -\alpha_{42}u_A u_C + \mu_A u_C)\phi \,dxdt
    \end{multline}
    \begin{multline}
        \iint_{Q_T} - u_I \partial_t \phi \,dxdt -\int_{\Omega}u_{I_0}\phi(x,0)\,dx +\iint_{Q_T}d_I(x,t)\nabla u_I \cdot \nabla \phi \,dxdt\\
        = \iint_{Q_T} (- \alpha_{51}u_I u_A - \alpha_{52}u_I u_V + \mu_I u_P)\phi \,dxdt
    \end{multline}
    \begin{multline}
        \iint_{Q_T} - u_P \partial_t\phi \,dxdt -\int_{\Omega}u_{P_0}\phi(x,0)\,dx +\iint_{Q_T}d_P(x,t)\nabla u_P \cdot \nabla \phi \,dxdt\\
        =\iint_{Q_T}(\alpha_{61}u_C u_A+\alpha_{62}u_V u_I - \delta_P u_P)\phi \,dxdt
    \end{multline}
    \begin{remark*}
        For spatial dimension $\sd\leqslant 2$, one can have weak solution, $(u_C,u_N,u_V,u_A,u_I,u_P)\in {(V_2(Q_T)\cap L^{\infty}(Q_T))}^6$ in the same weak solution form with test function $\phi\in C(0,T;H^1(\Omega))\cap H^1(0,T;L^2(\Omega))$ with $\phi(x,T)=0$.
    \end{remark*}    
\end{definition}

Here we make some assumptions for coefficients and known data:
\begin{enumerate}[label= (H\arabic*),ref=H\arabic*]
    \item\label{h1}\label{hypo:diffusion} Diffusion coefficient, $d_i(x,t)$ for $i\in\{C,N,A,I,P\}$, follows the ellipticity conditions, i.e. there exists some positive $d_i^{(0)},d_i^{(1)}\in \R^{+}$,
    \[d_i^{(0)} \leqslant d_i(x,t)\leqslant d_i^{(1)},\;\forall (x,t)\in Q_T, i\in\{C,N,A,I,P\}.\]
    \item\label{h2}\label{hypo:init} The initial conditions are assumed to be non-negative, i.e.
     \[ u_{i_0}(x)\geqslant 0,\;\forall x\in \Omega,\; i \in \{C,N,V,A,I,P\}\]
     and moreover,
     \[u_{i_0}\in L^{\infty}(\Omega),\,\forall i \in \{C,N,A,I,P\}\]
     \[u_{V_0}\in \{u| \|u\|_{C^{\alpha}(\bar{\Omega})}<\infty\,\text{and}\, \|\nabla u_{V_0}\|_{L^{2,n-2+2\alpha}_C(\Omega)}<\infty\} \,\text{with}\, \nabla_{\vec{\nu}}u_{V_0}(x) = 0,\,\text{for}\,x\in \partial \Omega,\] for some $\alpha \in (0,1)$.
     \item\label{h3}\label{hypo:bound} $B(u)$ is a bounded function, motivated by~\cite{choquet_well_2021}, satisfying the following definition:
        \[B(u):= \max\{0,\min\{u,1\}\},\,\text{for}\, u\in\{u_C,u_N\}.\]
        \begin{remark*}
           An example one can think of is a saturation behavior of population density:
           \[\hat{B}(u)=\frac{u}{1+u}.\]
        \end{remark*}
     \item\label{h4}\label{hypo:reaction} The constants $\alpha_{11}, \alpha_{21},\mu_C,K_C$ satisfy the following inequality:
     \[\alpha_{11}^2 < 4\alpha_{21} \frac{\mu_C}{K_C}.\]
\end{enumerate}

Now, we state the following existence and uniqueness theorem:
\begin{thm}\label{exist}
   (Global Existence and Uniqueness of the Weak Solution $\sd\leqslant 2$) 
For arbitrary time $T>0$, spatial dimension $\sd\leqslant 2$, under~\eqref{h1} -~\eqref{h4}, there exists a unique non-negative weak solution of the form, definition~\eqref{weak}, for the system~\eqref{equ:1} -~\eqref{equ:6}, in ${(V_2(Q_T)\cap L^{\infty}(Q_T))}^6$, for some constants $\chi_{11},\,\chi_{21}\in \R$.
\end{thm}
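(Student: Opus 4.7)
The plan is to prove existence by an iteration/decoupling scheme grounded on the a priori estimates of Section 3, and uniqueness via an energy argument that exploits the $L^\infty$-control. Starting from an iterate $(\tilde{u}_C,\tilde{u}_N,\tilde{u}_V,\tilde{u}_A,\tilde{u}_I,\tilde{u}_P)$ in a suitable closed convex set $\mathcal{K}\subset (V_2(Q_T)\cap L^\infty(Q_T))^6$ cut out by the a priori bounds, I would advance in three cascaded steps. First, solve the pure ODE \eqref{equ:3} for $u_V$ fiberwise in $x$ (given $\tilde u_A,\tilde u_I,\tilde u_P$) via Picard iteration, and invoke the Morrey-Campanato estimate of Yin \cite{yin1992l2} together with Hypothesis~\eqref{h2} to place $\nabla u_V$ into the class $\Camp{2}{\sd-2+2\alpha}$. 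Second, solve the scalar linear parabolic equations \eqref{equ:4}-\eqref{equ:6} for $u_A,u_I,u_P$ with $u_V$ from Step~1 and $\tilde u_C,\tilde u_N$ frozen, upgrading the gradients into the same Campanato class by the same mechanism. Third, solve the linearized cross-diffusion pair \eqref{equ:1}-\eqref{equ:2} for $(u_C,u_N)$, with the drifts $\vec W_C,\vec W_N$ assembled from the outputs of Steps~1-2 and from $\tilde u_C,\tilde u_N$; because $B$ is bounded (Hypothesis~\eqref{h3}) the drift flux is a bounded lower-order perturbation, and Galerkin approximation produces a solution in $V_2(Q_T)$. A fixed point of this map is extracted by the Schauder theorem on $\mathcal{K}$, with compactness supplied by Aubin-Lions from the $V_2$-bound together with the $\partial_t u \in L^2(0,T;H^*(\Omega))$ estimate, and strong $L^2(Q_T)$-convergence is enough to pass to the limit in every nonlinear product.

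The two lemmas of Section~4 presumably correspond to (i) well-posedness and quantitative bounds for each decoupled sub-problem and (ii) continuity/stability of the resulting solution map. The main obstacle, and the reason Hypothesis~\eqref{h4} and the restriction $\sd\leqslant 2$ both enter, is extracting an iteration-uniform $L^\infty$-bound on $(u_C,u_N)$. The idea is to test the $u_C$ and $u_N$ equations with $(u_C - M)_+$ and $(u_N - M)_+$ for $M$ large and to exploit the fact that under $\alpha_{11}^2 < 4\alpha_{21}\mu_C/K_C$ the combined reaction quadratic form $\alpha_{11} u_C u_N - \alpha_{21} u_C u_N + \mu_C u_C(1-u_C/K_C) + \mu_N u_N(1-u_N/K_N)$ is negative-definite outside a large rectangle, so $[0,M]^2$ is positively invariant for the pair. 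The cross-diffusion term is harmless at this stage: $B$ is bounded by $1$ and the Campanato gradient bounds from Steps~1-2 allow the drift flux to be absorbed into the leading diffusion via Young's inequality and the Sobolev embedding $H^1(\Omega)\hookrightarrow L^p(\Omega)$ available for every finite $p$ only when $\sd\leqslant 2$. Non-negativity propagates through the scheme because $B(u)=0$ on $\{u\leqslant 0\}$, so testing with $(u_C)_-$ and $(u_N)_-$ kills the cross-diffusion flux entirely and reduces each equation to a standard maximum-principle argument.

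For uniqueness, assume two weak solutions with the same data, subtract all six equations, and test each difference equation with the difference itself in $V_2(Q_T)$ (the test function is admissible thanks to the Hilbert structure of $V_2$). The Lipschitz continuity of $B$ on $[0,1]$, together with the uniform $L^\infty$-bound on every component from the previous paragraph, controls each cross-diffusion cross term by a product of an $L^2$-norm of a difference and an $L^2$-norm of a gradient of a difference; Young's inequality with coefficients small enough to be absorbed by the leading diffusion (again crucially using $H^1\hookrightarrow L^p$ in $\sd\leqslant 2$) followed by Gronwall's inequality forces the differences to vanish, completing the proof and yielding the unique non-negative weak solution in $(V_2(Q_T)\cap L^\infty(Q_T))^6$.
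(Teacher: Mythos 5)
There is a genuine gap, and it sits exactly where the difficulty of this theorem lies: the $L^{\infty}$-bound on $(u_C,u_N)$. Your invariant-rectangle argument (testing with $(u_C-M)_+$, $(u_N-M)_+$ and using~\eqref{h4} to make the reaction quadratic form negative outside a large box) does not apply here, because the coupling of the pair is not only through the reaction: the flux of the $u_C$-equation contains $\chi_{11}B(u_C)\nabla u_N$ and that of the $u_N$-equation contains $\chi_{21}B(u_N)\nabla u_C$. Testing with $(u_C-M)_+$ leaves $\int_{\Omega} B(u_C)\vec{W}_C\cdot\nabla (u_C-M)_+\,dx$, and after Young's inequality you are left with $\|\vec{W}_C\mathbf{1}_{\{u_C>M\}}\|_{L^2}^2$, which neither vanishes for large $M$ nor is small: from the a priori estimates $\nabla u_N$ is only in $L^2(Q_T)$, and an $L^2$ drift is not enough to run a De Giorgi/Moser truncation even for $\sd\leqslant 2$. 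Moreover, the ``Campanato gradient bounds from Steps 1--2'' that you invoke cover only $\nabla u_V,\nabla u_A,\nabla u_I,\nabla u_P$; the dangerous drifts $\nabla u_N$ (in the $u_C$-equation) and $\nabla u_C$ (in the $u_N$-equation) are part of the pair you are solving in Step 3 and are not yet controlled in any space better than $L^2$. The paper closes this loop differently (Lemma~\ref{max:ucun}): it derives the coupled Campanato estimates $\|\nabla u_C\|^2_{\Camp{2}{2}}\leqslant \tilde{C}_1\chi_{11}^2\|\nabla u_N\|^2_{\Camp{2}{2}}+C$ and its mirror image, which can only be summed and absorbed when $\tilde{C}_1\chi_{11}^2<1$ and $\tilde{C}_2\chi_{21}^2<1$; this smallness of $\chi_{11},\chi_{21}$ is precisely the ``for some constants $\chi_{11},\chi_{21}$'' clause of the theorem and never appears in your proposal. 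Note also that~\eqref{h4} is used in the paper in the $L^2$/$L^3$ energy estimate (Part~6 of Lemma~\ref{ap:v2}) to dominate $\alpha_{11}u_Nu_C^2$ by $\alpha_{21}u_N^2u_C$ and $\frac{\mu_C}{K_C}u_C^3$, not to produce an invariant region.

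The uniqueness sketch has the same defect. Subtracting the fluxes produces trilinear terms such as $\chi_{11}\bigl(B(u_C^{(1)})-B(u_C^{(2)})\bigr)\nabla u_N^{(2)}\cdot\nabla\bar{u}_C$, which involve the gradient of a \emph{solution}, not of a difference; Lipschitz continuity of $B$ and the $L^{\infty}$-bounds on the solutions give at best $\|\bar{u}_C\|_{L^{\infty}(\Omega)}\|\nabla u_N^{(2)}\|_{L^2(\Omega)}\|\nabla\bar{u}_C\|_{L^2(\Omega)}$, and $\|\bar{u}_C\|_{L^{\infty}}$ is not controlled by the Gronwall quantity $\|\bar{u}_C\|_{L^2}$ (nor is $\nabla u_N^{(2)}$ known in any $L^p$, $p>2$, from the $V_2$-bound alone). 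This is why the paper's uniqueness argument in Lemma~\ref{sepa:lemma1} supplements the energy estimate with Campanato estimates on the differences (again needing $\tilde{C}_1\chi_{11}^2<1$, $\tilde{C}_2\chi_{21}^2<1$), converts them into a H\"older bound on $\bar{u}_C,\bar{u}_N$, and closes only on a small time interval followed by a bootstrap in time. Structurally your single six-component Schauder loop differs from the paper's two-level scheme (an outer contraction in $u_V$ in the space $\{\|\nabla u\|_{\Camp{2}{\sd+2\alpha}}<\infty\}$, Lemma~\ref{lemma2}, wrapped around an inner Schaefer fixed point for the five parabolic components, Lemma~\ref{sepa:lemma1}); that difference by itself could be workable, but only after the two estimates above are supplied, and they are the substance of the proof.
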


\begin{thm}\label{exist_3d}
(Global Existence of the weaker solution $\sd =3$) For arbitrary time $T>0$, spatial dimension $\sd=3$, under~\eqref{h1} -~\eqref{h4}, there exists a non-negative weaker solution of the form, definition~\eqref{weak}, for the system~\eqref{equ:1} -~\eqref{equ:6}, in $ (V_2(Q_T)\cap L^3(Q_T))^2 \times {(V_2(Q_T)\cap L^{\infty}(Q_T))}^4$, for some constants $\chi_{11},\,\chi_{21}\in \R$, satisfying $(\chi_{11}+\chi_{21})^2< 4 d_C^{(0)}d_N^{(0)}$.
\end{thm}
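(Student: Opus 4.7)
The plan is to construct the three-dimensional weaker solution as a limit of solutions of a regularized system, reusing the decoupling strategy already set up for the proof of Theorem \ref{exist}, and extracting $V_2(Q_T)$-bounds on $u_C, u_N$ whose key ingredient is the algebraic compatibility $(\chi_{11}+\chi_{21})^2 < 4 d_C^{(0)} d_N^{(0)}$. I would start from mollified initial data $u_{i_0}^{(\varepsilon)}$ and, through the same fixed-point/Galerkin split used in Section 4, obtain an approximating sequence $(\appuc,\appun,u_V^{(\varepsilon)},u_A^{(\varepsilon)},u_I^{(\varepsilon)},u_P^{(\varepsilon)})$ of non-negative weak solutions. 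By \eqref{h3}, the coefficient $B$ is bounded by $1$ regardless of the actual size of $u_C, u_N$, so the cross-diffusion drifts $B(\appuc)\vec{W}_C^{(\varepsilon)}$ and $B(\appun)\vec{W}_N^{(\varepsilon)}$ stay in $L^2(Q_T)$ throughout the approximation.

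The central a priori step is to test \eqref{equ:1} by $\appuc$ and \eqref{equ:2} by $\appun$, integrate by parts, and add. The leading terms give $d_C^{(0)}\|\nabla \appuc\|_{L^2(Q_T)}^2 + d_N^{(0)}\|\nabla \appun\|_{L^2(Q_T)}^2$, while the dangerous cross-diffusion contributions $\chi_{11}B(\appuc)\nabla \appuc\cdot\nabla \appun$ and $\chi_{21}B(\appun)\nabla \appun\cdot\nabla \appuc$ appear with bounded coefficients. The hypothesis $(\chi_{11}+\chi_{21})^2 < 4 d_C^{(0)} d_N^{(0)}$, read as the positive-definiteness of a quadratic form in $|\nabla \appuc|$ and $|\nabla \appun|$, is exactly what allows their absorption into the diffusion via a weighted Young inequality. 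The remaining cross-diffusion couplings against $\nabla u_A^{(\varepsilon)},\nabla u_I^{(\varepsilon)},\nabla u_V^{(\varepsilon)}$ and all reaction terms are controlled by \eqref{h1}, \eqref{h4}, Young's inequality, Gronwall, and the uniform $L^\infty$-bounds on $u_V^{(\varepsilon)},u_A^{(\varepsilon)},u_I^{(\varepsilon)},u_P^{(\varepsilon)}$ coming from Section 3. This yields uniform $V_2(Q_T)$-bounds on $(\appuc,\appun)$; for $\sd=3$ the parabolic Sobolev embedding $V_2(Q_T)\hookrightarrow L^{10/3}(Q_T)\subset L^3(Q_T)$ supplies the $L^3(Q_T)$-regularity required by Definition \ref{weak}.

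To pass to the limit, I would combine the $V_2$-bound with bounds on $\partial_t \appuc,\partial_t \appun$ in $L^2(0,T;H^*(\Omega))$ read directly from the equations and apply Aubin--Lions--Simon to obtain strong $L^2(Q_T)$-convergence of $\appuc,\appun$ along a subsequence. Lipschitz continuity of $B$ then upgrades this to strong convergence of $B(\appuc),B(\appun)$ in every $L^p(Q_T)$, $p<\infty$, and paired with the weak $L^2(Q_T)$-convergence of the gradients this lets every cross-diffusion term converge in the distributional formulation. The equations for $u_V^{(\varepsilon)},u_A^{(\varepsilon)},u_I^{(\varepsilon)},u_P^{(\varepsilon)}$ carry no cross-diffusion and are easier: their $L^\infty$-bounds and standard parabolic compactness deal with their limits. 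Non-negativity survives as an a.e. pointwise limit.

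The main obstacle is the energy step. Making the absorption rigorous under the exact condition $(\chi_{11}+\chi_{21})^2 < 4d_C^{(0)} d_N^{(0)}$, rather than the naive $(|\chi_{11}|+|\chi_{21}|)^2 < 4d_C^{(0)} d_N^{(0)}$ that a direct pointwise estimate would yield, presumably requires splitting the domain according to whether $u_C,u_N$ are below or above $1$, so that on the saturated set $\{u_C\geq 1,u_N\geq 1\}$ the combined coefficient is exactly $\chi_{11}+\chi_{21}$, while elsewhere the factor $B(u_C)\leq u_C$ or $B(u_N)\leq u_N$ furnishes extra lower-order smallness that can be absorbed through Gronwall. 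Once this bound is in place the rest is standard existence-by-compactness; the price of working in $V_2\cap L^3$ rather than $L^\infty$ for the two cell components is that uniqueness is not available in three dimensions, which is why the theorem asserts only existence.
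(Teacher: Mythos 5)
Your overall skeleton (regularize, get $\varepsilon$-uniform $V_2$ bounds on $u_C,u_N$ via the joint energy estimate using $(\chi_{11}+\chi_{21})^2<4d_C^{(0)}d_N^{(0)}$, then pass to the limit by Aubin--Lions plus Lipschitz continuity of $B$) matches the spirit of the paper, but there is a genuine gap at the step you dispatch in one sentence: ``through the same fixed-point/Galerkin split used in Section 4, obtain an approximating sequence.'' The Section 4 construction does not survive in dimension $3$: its compactness and contraction arguments live in H\"older/Campanato norms with exponent $\mu=2+2\alpha>\sd$, and the $L^{\infty}$ bound on $u_C,u_N$ (Lemma~\ref{max:ucun}) is proved only for $\sd\leqslant 2$. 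Producing the approximate solutions is precisely the hard part in 3D, and mollifying the initial data does not touch the actual obstruction, which is the quadratic reaction coupling $u_Nu_C$ in the absence of any $L^{\infty}$ control on $u_C,u_N$. The paper handles this by (i) a different decoupling: solving the $(u_V,u_A,u_I,u_P)$ subsystem for a \emph{prescribed} $\hat u_C\in L^3(Q_T)$ (Lemma~\ref{3d:lemma1}, which itself needs a contraction argument and Lemma~\ref{holderlemma} for the ODE component $u_V$ --- ``standard parabolic compactness'' does not apply to the non-diffusive $u_V$ equation); (ii) regularizing the reaction term as $u_N\,u_C/(1+\varepsilon u_C)$, not the data; and (iii) a Leray--Schauder fixed point for $(u_C^{(\varepsilon)},u_N^{(\varepsilon)})$ in a ball of $(L^3(Q_T))^2$, with a quantitative choice of the radius $K_0$, the parameter $\kappa$ and a small time $\tau^*$ to get invariance of the ball, followed by a bootstrap in time. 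Your proposal contains no substitute for this construction, so as written the approximating sequence whose limit you take is not known to exist.

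Two smaller points. First, the closing of the energy estimate is not only a matter of absorbing the cross-diffusion terms: you must also absorb $\alpha_{11}\int_{\Omega}u_Nu_C^2$, which is done by Young's inequality against $\alpha_{21}\int_{\Omega}u_N^2u_C$ and the cubic logistic terms, and this is exactly where \eqref{hypo:reaction} enters (via $\alpha_{11}^2\leqslant\frac{\theta-1}{\theta}\,\frac{4\alpha_{21}\mu_C}{K_C}$); citing (H4) in a list is not enough, since without this mechanism the $V_2$ bound does not close at all (the paper also gets the $L^3$ bound from the leftover cubic terms, though your route via $V_2\hookrightarrow L^{10/3}$ in 3D is fine). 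Second, the domain splitting $\{u_C\geqslant 1,\,u_N\geqslant 1\}$ you anticipate is not used in the paper: the estimate is run with the crude bound $B\leqslant 1$, and since the theorem only asserts existence ``for some constants $\chi_{11},\chi_{21}$,'' no sharpening from $(|\chi_{11}|+|\chi_{21}|)^2$ to $(\chi_{11}+\chi_{21})^2$ is needed for the argument to go through.
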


\section{Apriori Estimate}
In this section, we shall show some useful apriori estimates of the system~\eqref{equ:1} -~\eqref{equ:6}. For the notation simplicity on both claiming and proof process, we denote an index set, $\I$, as a collection of subindex for each variable, i.e. $\I:=\{C,N,V,A,I,P\}$.

\begin{lem}\label{ap:pos} (Non-negativity) Under the assumption~\eqref{h1}-\eqref{h2}, a solution $(u_i)_{i\in\I}$ to the system~\eqref{equ:1}-~\eqref{equ:6} must have a non-negative apriori estimate: $\forall i \in \I$,
    \[u_i(x,t)\geqslant 0,\; \forall (x,t)\in Q:=\Omega\times (0,\infty).\]  
\end{lem}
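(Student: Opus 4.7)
The plan is to prove all six non-negativity statements \emph{simultaneously} by a coupled energy argument. For each $i\in\I$, the idea is to test the corresponding equation of~\eqref{equ:1}--\eqref{equ:6} against $-u_i^-$, where $u_i^-:=\max(-u_i,0)\ge 0$, and to derive a single Gronwall inequality for $\Phi(t):=\sum_{i\in\I}\|u_i^-(\cdot,t)\|_{L^2(\Omega)}^{2}$. Since $u_i^-(\cdot,0)\equiv 0$ by~\eqref{hypo:init}, proving $\Phi\equiv 0$ forces $u_i\ge 0$ a.e.\ in $Q$. The standard chain-rule identities $\nabla u_i^-=-\chi_{\{u_i<0\}}\nabla u_i$ and $u_i\,u_i^-=-(u_i^-)^2$ underpin the computation.

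After testing and integrating by parts, the time derivative produces $\tfrac{1}{2}\frac{d}{dt}\|u_i^-\|_{L^2(\Omega)}^{2}$, and the zero-flux boundary condition~\eqref{bc} lets the divergence-form diffusion contribute the non-negative dissipation $\int_\Omega d_i|\nabla u_i^-|^2\,dx$, which I would absorb into the left-hand side. The key structural observation is that the cross-diffusion terms in~\eqref{equ:1}--\eqref{equ:2} vanish identically: by~\eqref{hypo:bound}, $B(u_C)=0$ on $\{u_C\le 0\}$, whereas $\nabla u_C^-$ is supported on $\{u_C<0\}$, so after integration by parts the resulting integrand $B(u_C)\,\vec W_C\cdot\nabla u_C^-$ is pointwise zero, and analogously for $u_N$. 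Thus the cutoff $B$ imposed in~\eqref{hypo:bound} is exactly what renders the otherwise problematic cross-diffusion fluxes compatible with a non-negativity proof.

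For the reactions I would use that on $\{u_i<0\}$ one has $u_i=-u_i^-$ and $u_i^2=(u_i^-)^2$. Each ``external'' source term that a priori threatens non-negativity---namely $\mu_A u_C$ in~\eqref{equ:4}, $\mu_I u_P$ in~\eqref{equ:5}, $\alpha_{61}u_Cu_A+\alpha_{62}u_Vu_I$ in~\eqref{equ:6}, and $\alpha_{33}u_Au_I$ in~\eqref{equ:3}---picks up a factor $-u_i^-\le 0$ upon testing and is non-positive \emph{provided the other components are non-negative}. The remaining terms either contain $u_i$ itself as a factor (yielding $\pm(u_i^-)^2\cdot(\text{coefficient})$) or a cubic $(\mu/K)(u_i^-)^3$ from the logistic contributions, which is dominated by $\|u_i\|_{L^\infty}\|u_i^-\|_{L^2}^{2}$. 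The mixed couplings $\alpha_{11}u_Nu_C$ and $\alpha_{21}u_Nu_C$ in~\eqref{equ:1}--\eqref{equ:2} are bounded by $\|u_N\|_{L^\infty(Q_T)}\|u_C^-\|_{L^2(\Omega)}^{2}$ and symmetrically. Summing over $i\in\I$ produces $\frac{d}{dt}\Phi(t)\le C\,\Phi(t)$ with $C$ depending on the (qualitatively assumed) $L^\infty$ bounds of the solution; Gronwall with $\Phi(0)=0$ closes the argument.

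The principal obstacle is the mutual coupling of the sign information: $u_V\ge 0$ relies on $u_A,u_I\ge 0$, which relies on $u_C\ge 0$, and so on around the reaction graph, so no single-equation comparison argument is available. Packaging everything into the scalar $\Phi$ and exploiting simultaneously (i) the cutoff property of $B$ to kill the cross-diffusion on $\{u_C,u_N<0\}$ and (ii) the sign structure of every reaction on $\{u_i<0\}$ is what breaks this circular dependence.
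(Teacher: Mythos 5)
Your argument is correct in substance but follows a genuinely different route from the paper. The paper disposes of this lemma with a one-line maximum-principle sketch: assume some component attains a zero (negative) minimum in $Q_T$, apply the pointwise maximum principle to the corresponding equation, and derive a contradiction, citing standard references. You instead run a Stampacchia-type negative-part energy argument, testing each equation against $-u_i^-$ and closing a Gronwall inequality for $\Phi(t)=\sum_{i\in\I}\|u_i^-(\cdot,t)\|_{L^2(\Omega)}^2$. Your key structural observation --- that by~\eqref{hypo:bound} one has $B(u_C)\,\nabla u_C^- \equiv 0$ a.e.\ (and likewise $B(u_C)\,u_C^-\equiv 0$ on $\partial\Omega$, so the boundary terms vanish even though $u_V$ carries no prescribed flux condition) --- is exactly the mechanism that makes the cross-diffusion terms harmless, and it is arguably more transparent here than the pointwise argument, since the system is in divergence form with nonconstant $d_i(x,t)$ and contains an ODE component; the energy route also extends verbatim to the weak-solution setting, whereas the paper's sketch implicitly works with classical solutions.

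One step in your write-up is stated loosely and should be repaired, though the repair is routine. For the source terms $\mu_A u_C$ in~\eqref{equ:4}, $\mu_I u_P$ in~\eqref{equ:5}, $\alpha_{61}u_Cu_A+\alpha_{62}u_Vu_I$ in~\eqref{equ:6} and $\alpha_{33}u_Au_I$ in~\eqref{equ:3}, you say they are non-positive ``provided the other components are non-negative''; but that is precisely what is not yet known, and merely summing into $\Phi$ does not by itself remove the difficulty. The correct bookkeeping is to split each such factor as $u_j=u_j^+-u_j^-$: the contribution involving only $u_j^+$ (times $-u_i^-$) is non-positive, while every potentially positive remainder is of the form $(\text{bounded factor})\cdot\int_\Omega u_j^- u_i^-\,dx\leqslant C\,\Phi(t)$, where $C$ depends on the qualitative $L^{\infty}(Q_T)$ bound of the (a priori smooth) solution --- the same standing assumption the paper itself makes when deriving its a priori estimates. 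With that splitting, $\Phi'\leqslant C\Phi$, $\Phi(0)=0$ gives $\Phi\equiv 0$ on $[0,T]$ for every finite $T$, and the arbitrariness of $T$ yields the claim on $Q=\Omega\times(0,\infty)$. So: not a gap in the idea, but a detail you must spell out for the proof to be complete.
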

\begin{proof}
    One can apply the maximum principle to each equation of the system~\eqref{equ:1} -~\eqref{equ:6} by assuming at least one component reaching a zero minimum in $Q_T$ and leading to a contradiction. The proof will be similar to~\cite{lou_global_1998, pao, protter2012maximum, yin_cross-diffusion_2017}.
\end{proof}

\begin{lem}\label{bc:v}
    (Induced Boundary Condition) With boundary conditions~\eqref{bc}, equation~\eqref{equ:3}, and under assumption~\eqref{hypo:init},
    \[\nabla_{\vec{\nu}} u_V(x,t) = 0,\,\forall (x,t) \in \partial \Omega \times (0,T),\, \text{in the sense of trace}.\]
\end{lem}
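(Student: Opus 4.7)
The strategy rests on the observation that \eqref{equ:3} is a pointwise ordinary differential equation in $t$ at each $x\in\bar{\Omega}$ (the right-hand side contains no spatial derivatives of $u_V$). After differentiating this ODE in the normal direction on $\partial\Omega$, the zero-flux boundary conditions in \eqref{bc} kill every term involving $\nabla_{\vec{\nu}}$ of $u_A$, $u_I$, or $u_P$, leaving a scalar linear homogeneous ODE for $\nabla_{\vec{\nu}}u_V$ on $\partial\Omega$; the zero initial datum $\nabla_{\vec{\nu}}u_{V_0}=0$ supplied by \eqref{hypo:init} then forces $\nabla_{\vec{\nu}}u_V\equiv 0$ on $\partial\Omega\times(0,T)$ by ODE uniqueness.

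Concretely, I would integrate \eqref{equ:3} in $t$ to produce the Volterra representation
\[u_V(x,t)=u_{V_0}(x)+\int_0^t\Phi(u_V,u_A,u_I,u_P)(x,s)\,ds,\]
where $\Phi$ denotes the polynomial right-hand side of \eqref{equ:3}. Applying a spatial gradient (in the weak sense afforded by the Campanato bound on $\nabla u_{V_0}$ in \eqref{hypo:init}) and contracting with the unit outward normal yields, on $\partial\Omega\times(0,T)$,
\[\nabla_{\vec{\nu}}u_V(x,t)=\nabla_{\vec{\nu}}u_{V_0}(x)+\int_0^t\bigl[a\,\nabla_{\vec{\nu}}u_V+b\,\nabla_{\vec{\nu}}u_A+c\,\nabla_{\vec{\nu}}u_I+e\,\nabla_{\vec{\nu}}u_P\bigr](x,s)\,ds,\]
with $L^\infty$ coefficients $a,b,c,e$ polynomial in the six unknowns; in particular $a=-\alpha_{31}u_P-\alpha_{32}u_I+\mu_V(1-2u_V/K_V)$. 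By \eqref{bc} the last three integrand terms vanish in the trace sense, and by \eqref{hypo:init} the boundary term vanishes as well, so the identity collapses to the scalar linear Volterra equation
\[\nabla_{\vec{\nu}}u_V(x,t)=\int_0^t a(x,s)\,\nabla_{\vec{\nu}}u_V(x,s)\,ds,\qquad(x,t)\in\partial\Omega\times(0,T),\]
from which Gr\"onwall's inequality (equivalently, uniqueness for the associated linear ODE) immediately yields the desired conclusion.

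The main obstacle is legitimizing the commutation of the normal-trace and time-integral operations in the low-regularity weak-solution setting, since $u_V$ inherits only Campanato-type spatial regularity from $u_{V_0}$ through the ODE. I would handle this by flattening $\partial\Omega$ in local boundary charts and mollifying both $u_{V_0}$ and the coefficients $u_A, u_I, u_P$ (preserving their normal traces) so that the calculation above is classical at each approximation level, and then pass to the limit using the $L^\infty$ bounds from Lemma~\ref{ap:pos} and the Campanato/Morrey control of $\nabla u_{V_0}$ supplied by \eqref{hypo:init}. Continuity of the normal-trace operator on the appropriate function space then promotes the identity to the limit, delivering the trace-sense statement.
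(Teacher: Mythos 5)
Your proposal is correct and follows essentially the same route as the paper: differentiate (or, equivalently, the time-integrated Volterra form of) equation~\eqref{equ:3} in the normal direction, let the zero-flux conditions~\eqref{bc} annihilate the $\nabla_{\vec{\nu}}u_A$, $\nabla_{\vec{\nu}}u_I$, $\nabla_{\vec{\nu}}u_P$ terms, and conclude from the resulting linear homogeneous ODE with zero initial datum $\nabla_{\vec{\nu}}u_{V_0}=0$ that $\nabla_{\vec{\nu}}u_V\equiv 0$ (the paper solves it by an integrating factor, you by Gr\"onwall, which is equivalent). Your chart-and-mollification treatment of the low-regularity trace issue is simply a more explicit version of the paper's $C^2$-approximation remark, so no substantive difference remains.
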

\begin{proof}
    Since $\partial \Omega$ is assumed to be at least $C^2$, we may use the $C^2$-function to approximate a $V_2(Q_T)$ function. Without loss of generality, we may assume $u_i\in C^2(\bar{Q}_T)$ in the classical sense. Taking gradient on both sides of equation~\eqref{equ:3} and dot product with unit normal outward vector $\nu$, we have the following equation: $\forall (x,t) \in \partial \Omega \times (0,T),$
    \begin{flalign}\label{uv:boundary}
        \partial_t \nd u_V =& -\alpha_{31}u_P \nabla_{\nu} u_V-\alpha_{31}u_V \nabla_{\nu} u_P-\alpha_{32}u_I \nabla_{\nu} u_V-\alpha_{32}u_V \nabla_{\nu} u_I \notag\\
        &+ \alpha_{33} u_A\nd u_I + \alpha_{33} u_I\nd u_A + \mu_V \nd u_V - 2 \frac{\mu_V}{K_V}u_V \nd u_V \notag \\
        =& -\alpha_{31}u_P \nabla_{\nu} u_V -\alpha_{32}u_I \nabla_{\nu} u_V + \mu_V \nd u_V - 2 \frac{\mu_V}{K_V}u_V \nd u_V
    \end{flalign}
    where the last equation is due to boundary condition~\eqref{bc}.

    Denote $y(t): = \nd u_V(\cdot,t),\,\forall t \in (0,T)$, then equation~\eqref{uv:boundary} can be rewritten as a linear ordinary differential equation of $y(t)$:
    \begin{flalign}
        &\frac{d}{dt} y = (\mu_V -\alpha_{31}u_P - \alpha_{32}u_I -2\frac{\mu_V}{K_V}u_V) y, \notag\\
        \implies& y e^{-\int_{0}^t(\mu_V -\alpha_{31}u_P - \alpha_{32}u_I -2\frac{\mu_V}{K_V}u_V) ds}= y(0) = 0\notag\\
        \implies& y(t) = 0,\,\forall t\in (0,T). \notag
    \end{flalign}

    This completes the proof.
\end{proof}

\begin{lem} (Theorem 1, 2 in \cite{yin1992l2})\label{camponato}
    Let $Q_T = \Omega \times (0,T)$ and the functions $f_0(x,t)\in \Camp{2}{(\mu-2)^+}$ and $f_i(x,t)\in \Camp{2}{\mu}$ for $i=1,2,\cdots,n$. Let $u(x,t)$ be the weak solution of the following euqtion:
    \begin{equation}\label{camp:grad}
    u_t -\sum_{i,j = 1}^n\frac{\partial}{\partial x_i}(a_{ij}(x,t)u_{x_j}) = \sum_{i = 1}^n\frac{\partial}{\partial x_i}f_i + f_0,
    \end{equation}
    with Neumann's type boundary condition 
    \[\nabla_{\vec{\nu}} u = 0,\;\forall (x,t)\in \partial \Omega \times (0,T), \]then
    \[ \| \nabla u \|_{\Camp{2}{\mu}}^2\leqslant C \left( \|u\|_{L^2(Q_T)}^2 +\|f_0\|_{\Camp{2}{(\mu-2)^+}}^2 + \sum_{i=1}^n \|f_i\|_{\Camp{2}{\mu}}^2 \right)\]
    where $\mu<\mu_0 = \sd +2\delta_0, \text{ for some }\delta_0\in(0,1)$. Moreover, by the imbedding, it yields 
    \[u\in \Camp{2}{\mu+2}.\]
    In particular, if $\mu > \sd$, then 
    \[u\in C^{\alpha,\frac{\alpha}{2}}(\bar{Q}_T),\text{ where }\alpha = \frac{\mu-\sd}{2}.\]
\end{lem}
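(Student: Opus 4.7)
The plan is to follow the classical Campanato--Da Prato iteration scheme adapted to the parabolic setting with Neumann boundary data. For any base point $(x_0,t_0)\in \bar Q_T$ and parabolic cylinder $Q_r(x_0,t_0)\cap Q_T$, I would freeze the diffusion matrix at $(x_0,t_0)$ and split $u=v+w$, where $v$ solves the frozen-coefficient homogeneous problem
\[ v_t - a_{ij}(x_0,t_0)\,\partial_i\partial_j v = 0 \]
with the trace of $u$ as parabolic lateral and initial data, plus homogeneous Neumann data on the piece of lateral boundary touching $\partial\Omega$. The residual $w=u-v$ then satisfies a Cauchy--Neumann problem whose right-hand side collects the coefficient perturbation $\partial_i\bigl[(a_{ij}-a_{ij}(x_0,t_0))\partial_j v\bigr]$ together with the original sources $\partial_i f_i + f_0$.

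The first key step is the \emph{Campanato excess decay} for $v$: interior parabolic smoothing (and, near $\partial\Omega$, a $C^2$-flattening of the boundary followed by even reflection compatible with the Neumann condition) makes $\nabla v$ Lipschitz on shrunken cylinders, so parabolic scaling gives
\[ \int_{Q_\rho}\bigl|\nabla v - (\nabla v)_{Q_\rho}\bigr|^2\,dxdt \le C\,(\rho/r)^{\sd+4}\int_{Q_r}\bigl|\nabla v - (\nabla v)_{Q_r}\bigr|^2\,dxdt \]
for $0<\rho<r$. Since $\mu<\mu_0=\sd+2\delta_0<\sd+4$, this decay rate is geometrically stronger than the Campanato target $r^\mu$, leaving room to absorb the perturbation term.

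Next I would apply a Caccioppoli--energy estimate to $w$ on $Q_r$, using the Campanato assumptions on the sources and a continuity modulus $\omega(r)\to 0$ of $a_{ij}$, obtaining
\[ \int_{Q_r}|\nabla w|^2\,dxdt \le C\,\omega(r)\int_{Q_r}|\nabla u|^2\,dxdt + C\,r^\mu\mathcal{F}, \]
with $\mathcal{F}:=\|f_0\|^2_{\Camp{2}{(\mu-2)^+}} + \sum_i\|f_i\|^2_{\Camp{2}{\mu}}$ (and an additional $\|u\|^2_{L^2(Q_T)}r^{\sd+2}$ term tracking the zero-order data). Adding the two estimates produces the Campanato iteration inequality
\[ \Phi(\rho)\le C\bigl[(\rho/r)^{\sd+4}+\omega(r)\bigr]\Phi(r) + C\, r^\mu\bigl(\mathcal{F}+\|u\|^2_{L^2(Q_T)}\bigr), \]
where $\Phi(r):=\int_{Q_r\cap Q_T}|\nabla u - (\nabla u)_{Q_r}|^2\,dxdt$. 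The standard Campanato iteration lemma then upgrades this to $\Phi(r)\le C\,r^\mu$ for every $\mu<\mu_0$, giving the claimed bound on $\|\nabla u\|_{\Camp{2}{\mu}}$. The embedding $u\in \Camp{2}{\mu+2}$ is a direct consequence via the parabolic Poincar\'e inequality $\fint_{Q_r}|u-(u)_{Q_r}|^2\le C r^2 \fint_{Q_r}|\nabla u|^2$, which raises the oscillation exponent by two. Finally, when $\mu>\sd$, the Campanato--Morrey identification of $\Camp{2}{\sd+2+2\alpha}$ with $C^{\alpha,\alpha/2}(\bar Q_T)$ for $\alpha=(\mu-\sd)/2$ yields the H\"older statement.

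The main obstacle is the compatibility between the Neumann boundary condition and the divergence-form source. After flattening $\partial\Omega$ one would like to extend $u$ and $v$ by even reflection across the flat piece so that the Neumann data are converted into the smoothness of the extension across the interface; however, to preserve the divergence identity under this extension, the vector field $(f_1,\dots,f_n)$ must be extended by \emph{odd} reflection in its normal component and \emph{even} reflection in its tangential components, while $f_0$ is extended evenly. Carrying out this reflection consistently, controlling the Campanato norms of the extended sources in terms of the original norms, and absorbing the lower-order terms produced by the Jacobian of the flattening diffeomorphism, is the principal technical labour of the argument.
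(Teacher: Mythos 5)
The paper does not prove this lemma at all; it simply cites Yin's 1992 paper, where the result is established by comparing $u$ on each cylinder with the solution of the \emph{homogeneous equation with the same bounded measurable coefficients} and using the De Giorgi--Nash--Moser H\"older estimate, which is exactly what produces the restriction $\mu<\mu_0=\sd+2\delta_0$ with $\delta_0\in(0,1)$. Your proposal instead runs the Campanato--Da Prato \emph{frozen-coefficient} perturbation scheme, and this is where it breaks down in the present setting: the lemma (and hypothesis (H1) of the paper) assumes only boundedness and uniform ellipticity of $a_{ij}(x,t)$, with no modulus of continuity. Your Caccioppoli estimate for the residual $w$ contains the term $C\,\omega(r)\int_{Q_r}|\nabla u|^2$, and with merely measurable coefficients $\omega(r)$ does not tend to zero, so it can never be absorbed; likewise the claimed excess decay $(\rho/r)^{\sd+4}$ for $\nabla v$ relies on interior Lipschitz regularity of the gradient, which is available only for the constant-coefficient comparison problem and is irrelevant once the perturbation cannot be controlled. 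The very presence of the exponent $\delta_0$ in the statement is the signal that the correct comparison function solves $v_t-\partial_i(a_{ij}\partial_j v)=0$ with the original rough coefficients, for which one only gets a Morrey-type decay $\int_{Q_\rho}|\nabla v|^2\leqslant C(\rho/r)^{\sd+2\delta_0}\int_{Q_r}|\nabla v|^2$ via De Giorgi--Nash plus Caccioppoli; your scheme, if it worked, would prove the estimate for all $\mu<\sd+4$, which is false at this level of generality.

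Two smaller points: the step from $\|\nabla u\|_{\Camp{2}{\mu}}$ to $u\in\Camp{2}{\mu+2}$ is not just the spatial parabolic Poincar\'e inequality; the oscillation of $u$ in the time direction must be controlled through the equation itself (e.g. via Steklov averages and the bounds on $f_0$, $f_i$), so this needs an argument rather than a one-line appeal. Your discussion of even/odd reflection of the data after flattening the boundary is sensible and would be part of a complete boundary argument, but it does not repair the central issue above. To align with the result as stated you should replace the freezing step by comparison with the rough-coefficient homogeneous solution and import the interior and boundary H\"older estimates of De Giorgi--Nash--Moser, as in the cited reference.
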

\begin{proof}
    Proof can be found in~\cite{yin1992l2}.
\end{proof}

\begin{lem}\label{ap:v2} 
    (Apriori Estimate) Under the assumption~\eqref{h1}-\eqref{h4} and for some $\chi_{11},\,\chi_{21}$, satisfying $(\chi_{11}+\chi_{21})^2<4d_C^{(0)}d_N^{(0)}$, for any fixed $T>0$, dimension $\sd\leqslant 3$, a solution $(u_i)_{i\in\I}$ to the system~\eqref{equ:1}-\eqref{equ:6} must satisfy the following a priori estimate: 
    \[\|u_i\|_{V_2(Q_T)} + \|u_i\|_{L^{\infty}(Q_T)}\leqslant C,\; \forall i \in \I,\]
    \[\|\partial_t u_i\|_{L^2(0,T;H^*(\Omega))}\leqslant C,\; \forall i \in \I,\]
    moreover, 
    \[\|u_i\|_{L^{\infty}(Q_T)}\leqslant C, \forall i\in\I\backslash\{C,N\},\]
    where $C$ depends only on $T$ and known data.
\end{lem}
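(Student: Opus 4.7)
The plan is to exploit the hierarchical structure of the system: the strongly cross-diffusing pair $(u_C,u_N)$ drives all other components through lower-order forcing, while $u_V,u_A,u_I,u_P$ satisfy either an ODE (for $u_V$) or a linear parabolic equation with sources determined by the pair. Once the coupled pair is controlled in energy norm, the remaining components can be handled by standard parabolic regularity and pointwise ODE comparison. I would proceed in four stages: (i) test \eqref{equ:1}--\eqref{equ:2} by $u_C,u_N$ and sum to obtain coupled $V_2(Q_T)$ estimates; (ii) test \eqref{equ:4}--\eqref{equ:6} by $u_A,u_I,u_P$ and integrate \eqref{equ:3} pointwise in $x$ to obtain $V_2$ estimates for the remaining four components; (iii) bootstrap $u_V,u_A,u_I,u_P$ to $L^\infty(Q_T)$, and in $\sd\leqslant 2$ also $u_C,u_N$, using Lemma~\ref{camponato} to control the drift velocities; (iv) derive the time-derivative bounds by duality.

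The heart of the argument is stage (i). Summing the tested equations, the cross-fluxes $\chi_{11}\nabla u_N$ and $\chi_{21}\nabla u_C$ collapse into the single bilinear form
\[
\iint_{Q_T}\bigl(B(u_C)\chi_{11}+B(u_N)\chi_{21}\bigr)\nabla u_C\cdot\nabla u_N\,dxdt,
\]
which, since $0\leqslant B\leqslant 1$, is absorbed by the diagonal diffusion via Cauchy--Schwarz and Young whenever $(\chi_{11}+\chi_{21})^2<4d_C^{(0)}d_N^{(0)}$, leaving a coercive quadratic form in $(\nabla u_C,\nabla u_N)$. The residual drifts $\chi_{1j}\nabla u_A$, $\chi_{1j}\nabla u_I$, $\chi_{1j}\nabla u_V$ (and the $u_N$-analogues) are split by Young into a small share of $\|\nabla u_C\|_{L^2}^2+\|\nabla u_N\|_{L^2}^2$ and a large share of $\|\nabla u_A\|_{L^2}^2+\|\nabla u_I\|_{L^2}^2+\|\nabla u_V\|_{L^2}^2$, which then enter through the estimates of stage (ii). The dangerous cubic reaction $\alpha_{11}u_Nu_C^2$ is dominated by the logistic dissipations $(\mu_C/K_C)u_C^3+(\mu_N/K_N)u_N^3$ via the three-term AM--GM bound $u_Nu_C^2\leqslant \tfrac13 u_N^3+\tfrac23 u_C^3$ together with hypothesis \eqref{h4}. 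Gronwall's inequality on $[0,T]$ then closes the coupled energy estimate.

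For stages (ii)--(iii), equations \eqref{equ:4}--\eqref{equ:6} are linear parabolic in their principal parts with non-negative sources, so the Ladyzhenskaya embedding $V_2(Q_T)\hookrightarrow L^{2(\sd+2)/\sd}(Q_T)$ (valid for $\sd\leqslant 3$) together with the maximum principle or a Moser/Alikakos iteration upgrades $u_A,u_I,u_P$ from $V_2$ to $L^\infty(Q_T)$. The ODE \eqref{equ:3} is then integrated pointwise in $x$ to give $u_V(x,t)\leqslant e^{\mu_V T}\bigl(\|u_{V_0}\|_{L^\infty}+T\alpha_{33}\|u_A\|_{L^\infty}\|u_I\|_{L^\infty}\bigr)$. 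With $\nabla u_A,\nabla u_I,\nabla u_P$ placed in a Campanato class via Lemma~\ref{camponato} applied to \eqref{equ:4}--\eqref{equ:6} (and $\nabla u_V$ inheriting the same regularity from the ODE after differentiating in $x$), the drift velocities $\vec W_C,\vec W_N$ become bounded in a norm strong enough that a Moser iteration on \eqref{equ:1}--\eqref{equ:2} promotes $u_C,u_N$ to $L^\infty$ when $\sd\leqslant 2$. Finally, stage (iv) estimates $\|\partial_t u_i\|_{L^2(0,T;H^*(\Omega))}$ by pairing each equation with $\phi\in H^1(\Omega)$, $\|\phi\|_{H^1(\Omega)}\leqslant 1$, and invoking the previously obtained $V_2$ and $L^\infty$ bounds.

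I expect the principal obstacle to be stage (i): the cross-diffusion quadratic form and the cubic reaction algebra must be made coercive simultaneously, which uses both hypothesis \eqref{h4} and the bound $(\chi_{11}+\chi_{21})^2<4d_C^{(0)}d_N^{(0)}$ at full strength. A secondary subtlety is the $L^\infty$ upgrade for $u_C,u_N$ in $\sd=2$: it hinges on Lemma~\ref{camponato} placing the parabolic drift velocities in an adequate Campanato class, and on the fact that $\nabla u_V$---arising from an ODE rather than a divergence-form parabolic equation---inherits sufficient regularity through its source $\alpha_{33}\nabla(u_Au_I)-\alpha_{31}\nabla(u_Vu_P)-\alpha_{32}\nabla(u_Vu_I)$ once the other components are controlled.
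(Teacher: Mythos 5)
Your overall toolkit (coupled energy estimate for the $(u_C,u_N)$ pair, maximum principle, Campanato regularity via Lemma~\ref{camponato}, duality for $\partial_t u_i$) matches the paper, but two of your key steps do not go through as stated. First, the cubic reaction term: you absorb $\alpha_{11}u_Nu_C^2$ by the AM--GM bound $u_Nu_C^2\leqslant \tfrac13u_N^3+\tfrac23u_C^3$ ``together with \eqref{h4}'', but \eqref{h4} constrains only $\alpha_{11},\alpha_{21},\mu_C,K_C$ and says nothing about $\mu_N/K_N$; dominating $\tfrac{\alpha_{11}}{3}u_N^3$ by the logistic dissipation $\tfrac{\mu_N}{K_N}u_N^3$ would require an additional smallness assumption not made in the lemma. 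The correct absorption, and the one the paper uses, keeps the good term $-\alpha_{21}\int_\Omega u_N^2u_C\,dx$ produced by testing \eqref{equ:2} with $u_N$ and splits $\alpha_{11}u_Nu_C^2\leqslant \alpha_{21}u_N^2u_C+\frac{\alpha_{11}^2}{4\alpha_{21}}u_C^3$; this is exactly where $\alpha_{11}^2<4\alpha_{21}\mu_C/K_C$ is needed, and your proposal discards that cross term.

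Second, your scheduling of stages (i)--(iii) is circular. Stage (i) needs $\nabla u_A,\nabla u_I,\nabla u_V\in L^2(Q_T)$; but your stage (ii) needs $u_C\in L^2(Q_T)$ for the sources $\mu_Au_C$ and $\alpha_{61}u_Cu_A$, and the trilinear terms $\alpha_{61}u_Cu_Au_P$, $\alpha_{62}u_Vu_Iu_P$ in \eqref{equ:6} cannot be handled by energy methods without prior $L^\infty$ control of $u_A$ and $u_V$, which you postpone to stage (iii); moreover ``integrating \eqref{equ:3} pointwise'' gives no bound on $\nabla u_V$, and the gradient bound obtained by differentiating the ODE itself needs $L^\infty$ bounds on $u_A,u_I,u_V$. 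The paper breaks this circle with two ingredients missing from your plan: gradient-free $L^1(\Omega)$ estimates (on \eqref{equ:2}, then \eqref{equ:1}, then \eqref{equ:4}) which already yield $u_C,u_N\in L^2(Q_T)$ and $\|u_Au_C\|_{L^1(Q_T)}\leqslant C$, and $T$-independent comparison/maximum-principle bounds $u_A\leqslant\max\{\|u_{A_0}\|_{L^\infty},\mu_A/\alpha_{42}\}$ and $u_V\leqslant\max\{\|u_{V_0}\|_{L^\infty},\alpha_{33}M_A/\alpha_{32}\}$ that exploit the structure $u_C(\mu_A-\alpha_{42}u_A)$ in \eqref{equ:4} and its analogue in \eqref{equ:3}, before any energy estimate on the pair. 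Relatedly, for $\sd=3$ a Moser/Alikakos iteration cannot upgrade $u_I,u_P$ to $L^\infty$ from a source that is merely in $L^2(Q_T)$ (one needs integrability above $(\sd+2)/2=5/2$); the paper instead runs the Campanato bootstrap of Lemma~\ref{camponato} twice for $u_I$, and for $u_P$ it additionally uses $u_C\in L^3(Q_T)$, which only becomes available after the $(u_C,u_N)$ energy estimate. Finally, note the $L^\infty$ bound for $u_C,u_N$ in $\sd\leqslant 2$ that you sketch is not part of this lemma (it is the separate Lemma~\ref{max:ucun}), so your proof attempts more than required while leaving the above steps unsupported.
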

\begin{proof}
    First, by lemma~\ref{ap:pos}, we have already known the non-negativity of a solution of the system~\eqref{equ:1}-~\eqref{equ:6}. Since we are deriving apriori estimate, we may assume that the solution of the system~\eqref{equ:1}-\eqref{equ:6},  $(u_i)_{i\in\I} \in C^{0,0}(\bar{Q}_T)\cap C^{2,1}(Q_T)$.

    \textbf{Part 1: To prove} $\mathbf{\|u_A\|_{V_2(Q_T)}\leqslant C}$, we claim that:
    \begin{claim}
        \begin{align}
            &\bullet\; \|u_N u_C\|_{L^1(Q_T)} \leqslant C;\label{nc:l1}\\
            &\bullet\; \|u_C\|_{L^1(\Omega)}\leqslant C;\label{c:l1s}\\
            &\bullet\; \|u_A u_C\|_{L^1(Q_T)}\leqslant C.\label{ac:l1}\\
            &\bullet\; \|u_C\|_{L^2(Q_T)}\leqslant C.\label{uc:l2}\\
            &\bullet\; \|u_N\|_{L^2(Q_T)}\leqslant C.\label{un:l2}
        \end{align}
    \end{claim}
    \begin{claimproof}
        Applying $L^1(\Omega)$ estimate to equation~\eqref{equ:2}, we obtain the following:
        \[\frac{d}{dt}\int_{\Omega}u_N dx + \int_{\Omega}\alpha_{21}u_N u_C +\frac{\mu_N}{K_N} u_N^2 dx = ( \mu_N - \delta_N)\int_{\Omega}  u_N dx.\]

        Then multiplying integrating factor, $e^{-(\mu_N-\delta_N)t}$, and integrating over time, we obtain the following inequalities:
        \begin{multline*}
            \sup_{0<t<T} \int_{\Omega}u_N dx + \alpha_{21}\min\{1,e^{(\mu_N-\delta_N)T}\}\iint_{Q_T}u_N u_C dx dt\\ + \frac{\mu_N}{K_N}\min\{1,e^{(\mu_N-\delta_N)T}\}\iint_{Q_T}u_N^2dxdt\\ \leqslant \max\{1,e^{-(\mu_N-\delta_N)T}\}\int_{\Omega}u_{N_0}dx,
        \end{multline*}
        which implies
        \begin{align}
            &\|u_N\|_{L^1(\Omega)}\leqslant\sup_{0<t<T} \int_{\Omega}u_N dx\leqslant C;\label{un:l1}\\
            &\|u_N u_C\|_{L^1(Q_T)} \leqslant C; \notag\\
            &\|u_N\|_{L^2(Q_T)} \leqslant C.\notag
        \end{align}

        Hence, we obtain estimate~\eqref{nc:l1} and~\eqref{un:l2}.

        Applying $L^1(\Omega)$ estimate to equation~\eqref{equ:1} with the same process as above,  we obtain the following:
        \begin{multline*}
            \sup_{0<t<T} \int_{\Omega}u_C dx  + \frac{\mu_C}{K_C}\min\{1,e^{(\mu_C-\delta_C)T}\}\iint_{Q_T}u_C^2dxdt\\ \leqslant \alpha_{11}\max\{1,e^{(\mu_C-\delta_C)T}\}\iint_{Q_T}u_N u_C dx dt+\max\{1,e^{-(\mu_C-\delta_C)T}\}\int_{\Omega}u_{C_0}dx,
        \end{multline*}
        then using estimate~\eqref{nc:l1}, it implies
        \begin{align}
            &\|u_C\|_{L^1(\Omega)} \leqslant \sup_{0<t<T}\int_{\Omega}u_C dx\leqslant C;\notag\\ 
            &\|u_C\|_{L^2(Q_T)}\leqslant C.\notag
        \end{align}

        Hence, we obtain estimate~\eqref{c:l1s} and~\eqref{uc:l2}.

        Applying $L^1(Q_T)$ estimate on equation~\eqref{equ:4} and with estimate~\eqref{c:l1s}, we obtain the following:

        \[\frac{d}{dt}\int_{\Omega}u_A dx + \alpha_{41}\int_{\Omega}u_A u_I dx + \alpha_{42}\int_{\Omega}u_A u_C dx = \mu_A\int_{\Omega}u_C dx\leqslant C,\]

        Integrate on time, we have:
        \[\sup_{0<t<T}\int_{\Omega}u_A dx + \alpha_{41}\iint_{Q_T}u_A u_I dxdt + \alpha_{42}\iint_{Q_T}u_A u_C dxdt \leqslant C,\]
        which implies:
        \begin{align}
            &\|u_A\|_{L^1(\Omega)}\leqslant\sup_{0<t<T} \int_{\Omega}u_A dx\leqslant C;\label{ua:l1}\\
            &\|u_A u_I\|_{L^1(Q_T)}\leqslant C;\label{uai:l1}\\
            &\|u_A u_C\|_{L^1(Q_T)}\leqslant C \notag.
        \end{align}

        Hence, we obtain estimate~\eqref{ac:l1}.

    \end{claimproof}
    With estimate~\eqref{ac:l1}, we can proceed the proof on $\|u_A\|_{V_2(Q_T)}$. By applying $L^2(\Omega)$ estimate on equation~\eqref{equ:4}, we have:
    \begin{equation*}
        \frac{1}{2}\frac{d}{dt}\int_{\Omega}u_A^2dx + d_A^{(0)} \int_{\Omega}|\nabla u_A|^2 dx + \alpha_{41}\int_{\Omega} u_A^2 u_I dx + \alpha_{42}\int_{\Omega} u_A^2 u_C dx \leqslant \mu_A \int_{\Omega}u_C u_A dx
    \end{equation*}

    Then integrate on time,
    \begin{multline*}
        \sup_{0<t<T}\frac{1}{2}\int_{\Omega}u_A^2dx + d_A^{(0)}\iint_{Q_T}|\nabla u_A|^2dxdt +\alpha_{41}\iint_{Q_T}u_A^2u_I dxdt + \alpha_{42}\iint_{Q_T}u_A^2u_C dxdt\\ \leqslant \mu_A\iint_{Q_T}u_C u_A dxdt + \int_{\Omega}u_{A_0}dx\leqslant C, 
    \end{multline*}
    which implies 
    \begin{equation}\label{v2:ua}
        \|u_A\|_{V_2(Q_T)}\leqslant C.
    \end{equation}

    \textbf{Part 2: To prove} $\mathbf{\|u_A\|_{L^{\infty}(Q_T)}\leqslant C}$ and $\mathbf{\|u_V\|_{L^{\infty}(Q_T)}\leqslant C}$.

    For $L^{\infty}(Q_T)$-estimate of $u_A$, we define $w_A := M_A - u_A$, where $M_A = \max\{ \|u_{A_0}\|_{L^{\infty}(\Omega)},\frac{\mu_A}{\alpha_{42}} \}$, then we convert the equation~\eqref{equ:4} of $u_A$ into an equivalent form of $w_A$:
        \begin{multline*}
            \partial_t w_A = \nabla\cdot(d_A(x,t) \nabla w_A) - (\alpha_{41}u_I + \alpha_{42} u_C)w_A \\ + \alpha_{41}u_I M_A + \alpha_{42} u_C M_A -\mu_A u_C,\;\forall (x,t)\in Q_T;
        \end{multline*}
        \begin{equation*}
            \frac{\partial w_A}{\partial \nu} = 0,\; \forall (x,t)\in \partial\Omega\times (0,T);\quad w_A(x,0) \geqslant 0,\;\forall x \in \Omega.
        \end{equation*}
        Then by the weak maximum principle~\cite{pao}, since $\alpha_{41}u_I + \alpha_{42} u_C\geqslant 0$, $\alpha_{41}u_I M_A + \alpha_{42} u_C M_A -\mu_A u_C\geqslant 0$ and $w_A(x,0)\geqslant 0,\;\forall x\in \Omega$, we have $w_A(x,t)\geqslant 0, \;\forall (x,t)\in Q_T$, which implies 
        \begin{equation}
            u_A \leqslant M_A,\;\forall (x,t) \in Q_T.
        \end{equation}

        Hence, we obtain
        \begin{equation}\label{max:ua} 
            \|u_A\|_{L^{\infty}(Q_T)}\leqslant C.
        \end{equation}

        \begin{remark}
            Notice that $C$ in~\eqref{max:ua} is \textbf{independent} of $T$, so we obtain a \textbf{global} upper bound for $u_A$, i.e. $u_A(x,t)\leqslant M_A,\;\forall (x,t)\in Q$.
        \end{remark}

        For $L^{\infty}(Q_T)$-estimate of $u_V$, we define  $w_V := M_V - u_V$, where $M_V = \max\{ \|u_{V_0}\|_{L^{\infty}(\Omega)}, \frac{\alpha_{33}}{\alpha_{32}}M_A\}$, then we convert equation~\eqref{equ:3} into an equivalent form of $w_V$:
        \begin{multline*}
            \partial_t w_V = \alpha_{31} u_P M_V + (\alpha_{32} M_V - \alpha_{33}u_A )u_I + \mu_V M_V + \frac{\mu_V}{K_V} {(M_V-w_V)}^2 \\ - (\alpha_{31}u_P+\alpha_{32}u_I + \mu_V) w_V,\;\forall (x,t) \in Q_T;
        \end{multline*}
        \[\frac{\partial w_V}{\partial \nu} = 0,\; \forall (x,t)\in \partial\Omega\times (0,T);\quad w_V(x,0)\geqslant 0,\;\forall x \in \Omega.\]
        \begin{multline*}
            \implies w_V =e^{\int_{0}^{t}- (\alpha_{31}u_P+\alpha_{32}u_I + \mu_V)d\tau}( w_V(x,0) \\
            + \int_{0}^{t}\alpha_{31} u_P M_V + (\alpha_{32} M_V - \alpha_{33}u_A )u_I + \mu_V M_V + \frac{\mu_V}{K_V} (M_V-w_V)^2 d\tau)
        \end{multline*}
        Notice that $\alpha_{31} u_P M_V + (\alpha_{32} M_V - \alpha_{33}u_A )u_I + \mu_V M_V + \frac{\mu_V}{K_V}(M_V-w_V)^2\geqslant 0, \;\forall (x,t)\in Q_T$ and $w_V(x,0)\geqslant 0,\;\forall x \in \Omega$, hence, $w_V\geqslant 0,\;\forall (x,t)\in Q_T $. And it implies:
        \begin{equation}
            u_V \leqslant M_V,\;\forall (x,t) \in Q_T.
        \end{equation}
        Hence, we obtain
        \begin{equation}\label{max:uv}
          \|u_V\|_{L^{\infty}(Q_T)}\leqslant C.
        \end{equation}
        \begin{remark}
            Notice that the $C$ above is \textbf{independent} of $T$, so we obtain a \textbf{global} upper bound for $u_V$, i.e. $u_V(x,t) \leqslant M_V,\;\forall (x,t)\in Q$.
        \end{remark}

    \textbf{Part 3: To prove} $\mathbf{\|u_P\|_{V_2(Q_T)}\leqslant C}$ and $\mathbf{\|u_I\|_{V_2(Q_T)}\leqslant C}$.

    With estimates~\eqref{uc:l2},~\eqref{max:ua} and~\eqref{max:uv} and applying $L^2(\Omega)$ estimate on~\eqref{equ:5} and~\eqref{equ:6}, respectively:
    \begin{multline}\label{l2:ui}
        \frac{d}{dt}\int_{\Omega} u_I^2 dx + 2d_I^{(0)}\int_{\Omega} |\nabla u_I|^2 dx + 2\alpha_{51} \int_{\Omega} u_I^2 u_A dx + 2\alpha_{52}\int_{\Omega} u_I^2u_V dx\\ \leqslant  2\mu_I \int_{\Omega} u_P u_I dx
        \leqslant \mu_I \int_{\Omega}u_P^2 dx + \mu_I\int_{\Omega}u_I^2 dx
    \end{multline}
    where the last inequality above is due to Cauchy inequality;
    \begin{equation}
    \begin{aligned}\label{l2:up}
        \frac{d}{dt}\int_{\Omega}& u_P^2 dx + 2d_P^{(0)} \int_{\Omega} |\nabla u_P|^2 dx + 2\delta_P\int_{\Omega} u_P^2 dx\\
        &\leqslant 2\alpha_{61}\int_{\Omega} u_C u_A u_P dx + 2\alpha_{62}\int_{\Omega}u_V u_I u_P dx\\
        &\leqslant \alpha_{61} M_A (\int_{\Omega} u_C^2 dx + \int_{\Omega} u_P^2 dx) + \alpha_{62} M_V (\int_{\Omega}u_I^2 dx +\int_{\Omega} u_P^2 dx)\\
        &\leqslant \alpha_{61} M_A \int_{\Omega} u_C^2 dx + (\alpha_{61}M_A +\alpha_{62}M_V) \int_{\Omega} u_P^2 dx + \alpha_{62}M_V\int_{\Omega} u_I^2dx
        \end{aligned}
    \end{equation}
    where the second inequality above is due to inequality~\eqref{max:ua} and~\eqref{max:uv} and Cauchy inequality.

    Then adding up inequalities~\eqref{l2:ui} and~\eqref{l2:up}, by Gr\"{o}nwall's inequality with estimate~\eqref{uc:l2}, we obtain:
    \begin{multline*}
        \sup_{0<t<T} \int_{\Omega}u_I^2dx+\sup_{0<t<T} \int_{\Omega}u_P^2dx 
        +2d_I^{(0)}\iint_{Q_T} |\nabla u_I|^2 dx dt + 2d_P^{(0)} \iint_{Q_T} |\nabla u_P|^2 dx dt\\
         + 2\alpha_{51} \int_{Q_T} u_I^2 u_A dx + 2\alpha_{52}\int_{Q_T} u_I^2u_V dx \leqslant C 
    \end{multline*}

    Hence, we obtain
    \begin{align}
    &\|u_I\|_{V_2(Q_T)}\leqslant C; \label{v2:ui}\\
    &\|u_P\|_{V_2(Q_T)}\leqslant C.\label{v2:up}
    \end{align}
    \textbf{Part 4: To prove $\mathbf{\|u_I\|_{L^{\infty}(Q_T)}\leqslant C}$} for $\sd\leqslant 3$.

    We will prove the case for spatial dimension, $\sd=3$, in details. For $\sd\leqslant 2$, the proof will be similar, and we will omit them here. 
    
    Apply~\cref{camponato} on equations~\eqref{equ:6} with estimates~\eqref{uc:l2},~\eqref{max:ua},~\eqref{max:uv}, and~\eqref{v2:up}, we have the following estimates:
    \begin{flalign}
        \|\nabla u_P\|_{\Camp{2}{2}}^2 &\leqslant C(\|u_P\|_{L^2(Q_T)}^2+\|u_Cu_A\|_{\Camp{2}{0}}^2+\|u_Vu_I\|_{\Camp{2}{0}}^2+\|u_P\|_{\Camp{2}{0}}^2)\notag\\
        &\leqslant C(\|u_P\|_{L^2(Q_T)}^2+\|u_C\|_{L^2(Q_T)}^2+\|u_I\|^2_{L^2(Q_T)})\leqslant C \label{grad:up:camp3}&&\\
        \implies &\|u_P\|_{\Camp{2}{4}}^2\leqslant C\label{up:camp3}
    \end{flalign}
    Then we apply theorem~\eqref{camponato} again on equation~\eqref{equ:5} with estimates~\eqref{max:ua},~\eqref{max:uv},~\eqref{v2:ui},~\eqref{v2:up} and~\eqref{up:camp3}    
    \begin{flalign}
        \|\nabla u_I\|_{\Camp{2}{2}}^2 &\leqslant C(\|u_I\|_{L^2(Q_T)}^2+\|u_Iu_A\|_{\Camp{2}{0}}^2+\|u_Vu_I\|_{\Camp{2}{0}}^2+\|u_P\|_{\Camp{2}{0}}^2)\notag\\
        &\leqslant C(\|u_I\|_{L^2(Q_T)}^2+\|u_P\|_{L^2(Q_T)}^2)\leqslant C &&\\
        \implies &\|u_I\|_{\Camp{2}{4}}^2\leqslant C\label{ui:camp3}
    \end{flalign}
    With estimate~\eqref{ui:camp3} for spatial dimension $n=3$, we can further have the following estimate:
    \begin{flalign}
        \|\nabla u_I\|_{\Camp{2}{4}}^2 &\leqslant C(\|u_I\|_{L^2(Q_T)}^2+\|u_Iu_A\|_{\Camp{2}{2}}^2+\|u_Vu_I\|_{\Camp{2}{2}}^2+\|u_P\|_{\Camp{2}{2}}^2)\notag\\
        &\leqslant C(\|u_I\|_{L^2(Q_T)}^2+\|u_I\|_{\Camp{2}{2}}^2+\|u_P\|_{\Camp{2}{2}}^2)&&\notag\\
        &\leqslant C(\|u_I\|_{L^2(Q_T)}^2+\|u_I\|_{\Mor{2}{4}}^2+\|u_P\|_{\Mor{2}{4}}^2)\notag\\
        &\leqslant C(\|u_I\|_{L^2(Q_T)}^2+\|u_I\|_{\Camp{2}{4}}^2+\|u_P\|_{\Camp{2}{4}}^2)\notag\leqslant C\notag\\
        \implies &\|u_I\|_{\Camp{2}{6}}^2\leqslant C\label{ui:camp6} \\
        \iff& \|u_I\|_{\Hold{\alpha}}\leqslant C,\,\text{where}\,\alpha = \frac{1}{2}.
    \end{flalign}
    Hence, we obtain
    \begin{equation}\label{max:ui}
        \|u_I\|_{L^{\infty}(Q_T)}\leqslant C,
    \end{equation}
    when $\sd\leqslant 3$.
 
    \textbf{Part 5: To prove $\mathbf{\|u_V\|_{V_2(Q_T)}\leqslant C}$} for $\sd\leqslant 3$. 
    
    With estimates~\eqref{v2:ui},~\eqref{v2:up},~\eqref{max:ua} and~\eqref{max:ui}, we can proceed the proof on $\nabla u_V$.By taking gradient and dot product with $\nabla u_V$ on equation~\eqref{equ:3}:
    \begin{flalign*}
        \nabla u_V \cdot \partial_t \nabla u_V =& (-\alpha_{31}u_P - \alpha_{32}u_I + \mu_V - 2\mu_V u_V)|\nabla u_V|^2 \\
        &+ u_V \nabla (-\alpha_{31}u_P - \alpha_{32}u_I)\cdot \nabla u_V + \nabla (\alpha_{33}u_Au_I)\cdot \nabla u_V.\\
    \end{flalign*}
    
    Then integrating over $\Omega$ on spatial variable $x$,
    \begin{flalign*}
        \implies \frac{d}{dt} \int_{\Omega} |\nabla u_V|^2 dx =& 2\mu_V\int_{\Omega}|\nabla u_V|^2dx +2\int_{\Omega}(-\alpha_{31}u_P - \alpha_{32}u_I  - 2\mu_V u_V)|\nabla u_V|^2dx \\
        &-2\int_{\Omega}\alpha_{31}u_V \nabla u_P\cdot \nabla u_Vdx-2\int_{\Omega}\alpha_{32}u_V \nabla u_I\cdot \nabla u_V dx\\
        &+ 2\int_{\Omega}\alpha_{33}u_A \nabla u_I \cdot \nabla u_V dx+2\int_{\Omega}\alpha_{33}u_I \nabla u_A \cdot \nabla u_Vdx\\
        &\leqslant C(\int_{\Omega}|\nabla u_V|^2dx + \int_{\Omega} |\nabla u_P|^2 dx + \int_{\Omega} |\nabla u_I|^2 dx+ \int_{\Omega} |\nabla u_A|^2 dx),
        \end{flalign*}
    where the inequality is due to Cauchy inequality. 
    
    Hence, by Gr\"onwall inequality and estimates~\eqref{v2:ua},~\eqref{v2:ui} and~\eqref{v2:up}, we have:
    \begin{equation}
        \sup_{0<t<T} \|\nabla u_V\|_{L^2(\Omega)} \leqslant C.
    \end{equation}

    Therefore, we obtain:
    \begin{equation}\label{v2:uv}
        \|u_V\|_{V_2(Q_T)}\leqslant C.
    \end{equation}

    \textbf{Part 6: To prove $\mathbf{\|u_C\|_{V_2(Q_T)}\leqslant C}$ and $\mathbf{\|u_N\|_{V_2(Q_T)}\leqslant C}$} for $\sd\leqslant 3$.

    Applying $L^2(\Omega)$ estimate on equations~\eqref{equ:1} and~\eqref{equ:2}, then adding those up. Assuming that $(\chi_{11}+\chi_{21})^2<4d_C^{(0)}d_N^{(0)}$ and~\eqref{hypo:bound}, by Cauchy inequality, we obtain the following:
    \begin{flalign}\label{longv2}
        &\frac{1}{2}\frac{d}{dt}\int_{\Omega} (u_C^2 +u_N^2) dx + d_C^{(0)}\int_{\Omega}|\nabla u_C|^2dx +d_N^{(0)}\int_{\Omega}|\nabla u_N|^2dx + \frac{\mu_C}{K_C}\int_{\Omega}u_C^3 dx + \frac{\mu_N}{K_N}\int_{\Omega}u_N^3dx &&\notag\\
        &\leqslant \int_{\Omega}\chi_{11} \nabla u_N \cdot \nabla u_C dx+\int_{\Omega}\chi_{12} \nabla u_A \cdot \nabla u_C dx +\int_{\Omega}\chi_{13} \nabla u_I \cdot \nabla u_C dx +\int_{\Omega}\chi_{14} \nabla u_V \cdot \nabla u_C dx&&\notag\\
        & + \int_{\Omega}\chi_{21} \nabla u_C \cdot \nabla u_N dx+\int_{\Omega}\chi_{22} \nabla u_A \cdot \nabla u_N dx+\int_{\Omega}\chi_{23} \nabla u_I \cdot \nabla u_N dx +\int_{\Omega}\chi_{24} \nabla u_V \cdot \nabla u_N dx &&\notag\\
        & +\alpha_{11}\int_{\Omega} u_N u_C^2 dx+ \max\{\mu_C-\delta_C,\mu_N-\delta_N\} \int_{\Omega}(u_C^2 +u_N^2) dx - \alpha_{21}\int_{\Omega}u_N^2u_C dx &&\notag\\
        &\leqslant\frac{(\chi_{11}+\chi_{21})^2}{4\varepsilon} \int_{\Omega}|\nabla u_C|^2dx + \varepsilon\int_{\Omega}|\nabla u_N|^2 dx+ C(\xi)\int_{\Omega}|\nabla u_A|^2+|\nabla u_I|^2 + |\nabla u_V|^2dx  \notag\\
        & + \xi \int_{\Omega} |\nabla u_C|^2+ |\nabla u_N|^2 dx  + (\frac{\theta -1}{\theta})\frac{\mu_C}{K_C}\int_{\Omega}u_C^3 dx + C\int_{\Omega}u_C^2+u_N^2 dx,\notag
    \end{flalign}
    where $\theta$ is defined to satisfying $\alpha_{11}^2 \leqslant (\frac{\theta -1}{\theta})\frac{4\alpha_{21}\mu_C}{K_C}$ following by~\eqref{hypo:reaction}.

    Since we assume $(\chi_{11}+\chi_{21})^2<4d_C^{(0)}d_N^{(0)}$, there exists $\gamma >0$, such that 
    \begin{equation}
        (\chi_{11}+\chi_{21})^2 \leqslant (\frac{\gamma-1}{\gamma})^2 4 d_C^{(0)}d_N^{(0)}.\label{const:gamma}
    \end{equation} 
    Pick $\varepsilon = \frac{\gamma -1 }{\gamma} d_N^{(0)}$ and $\xi = \min\{\frac{1}{2\gamma}d_C^{(0)},\frac{1}{2\gamma}d_N^{(0)}\}$, then by Gr\"onwall's inequality, it yields the following estimate:
    \begin{equation}
    \sup_{0< t< T}\int_{\Omega}(u_C^2 + u_N^2 )dx + \iint_{Q_T} (|\nabla u_C|^2 + |\nabla u_N|^2) dxdt +\iint_{Q_T}u_C^3 + u_N^3 dxdt \leqslant C.\label{ucun:v2}
    \end{equation}
    Hence, we obtain 
    \begin{align}
    \|u_C\|_{V_2(Q_T)}\leqslant C, \label{uc:v2}\\
    \|u_N\|_{V_2(Q_T)}\leqslant C.
    \end{align}
\begin{remark}
    We also obtain some $L^3(Q_T)$ estimate from~\eqref{ucun:v2}:
    \begin{align}
    \|u_C\|_{L^3(Q_T)}\leqslant C, \label{uc:l3}\\
    \|u_N\|_{L^3(Q_T)}\leqslant C.\label{un:l3}
    \end{align}
\end{remark}

    \textbf{Part 7: To prove $\|\partial_t u_i\|_{L^2(0,T;H^*(\Omega))}\leqslant C,\,\forall i\in\I$ for $d\leqslant 3$.}

We shall prove the case for spatial dimension, $d=3$, in details. For $d\leqslant 2$, the proof will be similar in the virtue of an embedding $V_2(Q_T)\hookrightarrow L^4(Q_T)$, and we will omit them here. Given $v\in \{v: \|v\|_{H^1(\Omega)}\leqslant 1\}$ and recall the dual pairing between $H^*(\Omega)$ and $H^1(\Omega)$ by $\left<u,v \right>_H$, then we have:
\begin{align}
\left<\partial_t u_C,v \right>_H =& \left<\nabla\cdot (d_C(x,t) \nabla u_C),v \right>_H -\sum_{(i,j)\in \{(1,N),(2,A),(3,I),(4,V)\}}\left<\nabla \chi_{1i} \cdot B(u_C) \nabla u_j,v \right>_H\notag\\
&+ \left<\alpha_{11} u_C u_N,v \right>_H + \left<(\mu_C-\delta_C)u_C,v \right>_H - \left<\frac{\mu_C}{K_C}u_C^2,v \right>_H \notag\\
\leqslant & C( \sum_{j\in \I\backslash\{P\}}\|\nabla u_j\|_{L^2(\Omega)}\|v\|_{H^1(\Omega)} + \|u_C\|_{L^{\frac{12}{5}}(\Omega)}\|u_N\|_{L^{\frac{12}{5}}(\Omega)}\|v\|_{L^{6}(\Omega)}\notag\\
&+ \|u_C\|_{L^2(\Omega)}\|v\|_{L^2(\Omega)} +\|u_C\|^2_{L^{\frac{12}{5}}(\Omega)}\|v\|_{L^{6}(\Omega)})\notag\\
\leqslant& C \sum_{j\in \I\backslash\{P\}}\|\nabla u_j\|_{L^2(\Omega)} + \|u_C\|^2_{L^{3}(\Omega)} + \|u_C\|_{L^2(\Omega)},
\end{align}
where the second inequality is due to an embedding of $H^1(\Omega)\hookrightarrow L^6(\Omega)$ by Gagliardo-Nirenberg-Sobolev inequality for $d=3$, the third inequality is due to Cauchy inequality. Thus, we have:
\begin{equation}
    \|\partial_t u_C\|_{H^*(\Omega)} = \sup_{v\in\{v:\|v\|_{H^1(\Omega)}\leqslant 1\}} \left<\partial_t u_C,v \right>_H \leqslant  C \sum_{j\in \I\backslash\{P\}}\|\nabla u_j\|_{L^2(\Omega)} + \|u_C\|^2_{L^{3}(\Omega)} +\|u_N\|^2_{L^{3}(\Omega)} + \|u_C\|_{L^2(\Omega)},
\end{equation}

Notice that we need an important embedding to proceed as stated by the following claim:
\begin{claim}\label{v2:embedding}
    For dimension $d=3$, any $V_2(Q_T)$ function is embedding in $L^4(0,T;L^3(\Omega))$.
\end{claim}
\begin{claimproof}
    For any function $u\in V_2(Q_T)$,
    \begin{align}
        \|u\|^4_{L^4(0,T;L^3(\Omega))} &= \int_{0}^{T}\|u\|_{L^3(\Omega)}^4 dt\notag\\
        &\leqslant \int_{0}^{T} \|u\|_{L^2(\Omega)}^{2}\|u\|_{L^6(\Omega)}^{2} dt\notag\\
        &\leqslant \sup_{0<t<T}\|u\|_{L^2(\Omega)}^{2} \int_{0}^{T} C \|u\|_{H^1(\Omega)}^{2} dt \leqslant C\label{embedding:l3:v2}
    \end{align}
    where the first inequality is due to interpolation inequality, the second inequality is due to Gagliardo-Nirenberg-Sobolev inequality.
\end{claimproof}

Then we square both side and integrate over time variable, and one can obtain the following estimate:
\begin{align}
\int_{0}^T \|\partial_t u_C\|_{H^*(\Omega)}^2 dt &\leqslant  C (\sum_{j\in \I\backslash\{P\}}\| u_j\|^2_{L^2(0,T;H^1(\Omega))} + \|u_C\|^4_{L^{4}(0,T;L^3(\Omega))} +\|u_N\|^4_{L^{4}(0,T;L^3(\Omega))} + \|u_C\|^2_{L^2(Q_T)})\notag\\
&\leqslant C,
\end{align}
where the last inequality is due to Cauchy inequality, an embedding $V_2(Q_T) \hookrightarrow L^4(0,T;L^3(\Omega))$ for dimension $d=3$, and estimates in part 1-6.
Similarly, one would have the following estimates:
\begin{align}
    \|\partial_t u_N\|_{H^*(\Omega)} \leqslant  C \sum_{j\in \I\backslash\{P\}}\|\nabla u_j\|_{L^2(\Omega)} + \|u_C\|^2_{L^{3}(\Omega)} +\|u_N\|^2_{L^{3}(\Omega)} + \|u_N\|_{L^2(\Omega)},
\end{align}
which yields:
\begin{equation}
    \int_{0}^T \|\partial_t u_N\|_{H^*(\Omega)}^2 dt \leqslant C.
\end{equation}

Next, for equation~\eqref{equ:3} -~\eqref{equ:6}, one have the following estimates:

\begin{align}
    \left<\partial_t u_V,v \right>_H =& - \left<\alpha_{31} u_V u_P,v \right>_H - \left<\alpha_{32} u_V u_I,v \right>_H + \left<\alpha_{33} u_A u_I,v \right>_H +\left<\mu_V u_V,v \right>_H -\left<\frac{\mu_V}{K_V} u^2_V,v \right>_H\notag \\
    \leqslant& C(\|u_P\|_{L^2(\Omega)}+\|u_V\|_{L^2(\Omega)} + \|u_I\|_{L^2(\Omega)}), \label{dual:uv}\\
\left<\partial_t u_A,v \right>_H =& \left<\nabla\cdot (d_A(x,t) \nabla u_A),v \right>_H - \left<\alpha_{41} u_A u_I,v \right>_H - \left<\alpha_{42} u_A u_C,v \right>_H + \left<\mu_A u_C,v \right>_H\notag\\
\leqslant & C (\|\nabla u_A\|_{L^2(\Omega)} + \|u_C\|_{L^2(\Omega)}), \label{dual:ua}\\
\left<\partial_t u_I,v \right>_H =& \left<\nabla\cdot (d_I(x,t) \nabla u_I),v \right>_H - \left<\alpha_{51} u_A u_I,v \right>_H - \left<\alpha_{52} u_A u_V,v \right>_H + \left<\mu_I u_P,v \right>_H\notag\\
\leqslant & C (\|\nabla u_I\|_{L^2(\Omega)} + \|u_I\|_{L^2(\Omega)} + \|u_A\|_{L^2(\Omega)} + \|u_P\|_{L^2(\Omega)}),\label{dual:ui}\\
\left<\partial_t u_P,v \right>_H =& \left<\nabla\cdot (d_P(x,t) \nabla u_P),v \right>_H + \left<\alpha_{61} u_C u_A,v \right>_H + \left<\alpha_{62} u_V u_I,v \right>_H - \left<\delta_P u_P,v \right>_H\notag\\
\leqslant & C (\|\nabla u_A\|_{L^2(\Omega)} +\|u_C\|_{L^2(\Omega)} + \|u_I\|_{L^2(\Omega)}+ \|u_P\|_{L^2(\Omega)}). \label{dual:up}
\end{align}

Then they yield:
\begin{equation}
    \sum_{i\in\I\backslash \{C,N\}}\int_{0}^T \|\partial_t u_i\|_{H^*(\Omega)}^2 dt \leqslant C.
\end{equation}

Hence, we obtain:
\begin{equation}
    \|\partial_t u_i\|_{L^2(0,T;H^*(\Omega))} \leqslant C,\,\forall\,\sd\leqslant 3.
\end{equation}

    \textbf{Part 8: To prove $\mathbf{\|u_P\|_{L^{\infty}(\Omega)}\leqslant C}$} for $\sd\leqslant 3$.

    We will prove the case for spatial dimension, $\sd=3$, in details. For $\sd\leqslant 2$, the proof will be similar, and we will omit them here. By Morrey-Campanato space embedding, we have the inequality:
    \begin{equation}
        \|u\|_{L^{2,\frac{5}{3}}_C(Q_T)}\leqslant C\|u\|_{L^3(Q_T)},\,\forall u\in L^3(Q_T).
    \end{equation}

    Then with $L^3(Q_T)$-estimate~\eqref{uc:l3}, $V_2(Q_T)$-estimates~\eqref{v2:ui} and~\eqref{v2:up}, $L^{\infty}(Q_T)$-estimates~\eqref{max:ua} and~\eqref{max:uv}, and $L^{2,4}_C(Q_T)$-estimate~\eqref{up:camp3} and~\eqref{ui:camp3}, we have the following estimate: for any $\alpha\in(0,\frac{1}{3})$
    \begin{flalign}
        \|\nabla u_P\|_{\Camp{2}{3+2\alpha}}^2 &\leqslant C(\|u_P\|_{L^2(Q_T)}^2+\|u_Cu_A\|_{\Camp{2}{1+2\alpha}}^2+\|u_Vu_I\|_{\Camp{2}{1+2\alpha}}^2+\|u_P\|_{\Camp{2}{1+2\alpha}}^2)\notag\\
        &\leqslant C(\|u_P\|_{L^2(Q_T)}^2+\|u_C\|_{L^3(Q_T)}^2+\|u_I\|_{\Camp{2}{1+2\alpha}}^2+\|u_P\|_{\Camp{2}{1+2\alpha}}^2)\leqslant C &&\\
        \implies &\|u_P\|_{\Camp{2}{5+2\alpha}}^2\leqslant C\\
        \iff &\|u_P\|_{\Hold{\alpha}}^2\leqslant C.
    \end{flalign}
    Similar process works for $\sd=2$ with an embedding
    \begin{equation}
    \|u\|_{L^{2,2}_C(Q_T)}\leqslant C\|u\|_{L^4(Q_T)}\leqslant C (\|u\|_{L^2(0,T;H^1(\Omega)} + \|u\|_{L^{\infty}(0,T;L^2(\Omega))})\leqslant C\|u\|_{V_2(Q_T)},\,\text{for}\,d=2,\label{camp:embed:2d}
    \end{equation} 
    and estimate~\eqref{up:camp3} directly implies the desired estimate for $\sd=1$, so we obtain 
    \begin{equation}
        \|u_P\|_{L^{\infty}(Q_T)}\leqslant C,\,\forall\, \sd\leqslant 3.
    \end{equation}
    This completes the proof.
\end{proof}

\begin{lem}\label{holderlemma}
    Let $Q_T:= \Omega \times (0,T)$ is a bounded domain. Assume $f,g\in C^{\alpha,\frac{\alpha}{2}}(\bar{Q}_T)$ and $u_0\in C^{\alpha}(\bar{\Omega})$, then a (weak) solution, $u \in L^{\infty}(Q_T)$,  to the equation:
    \begin{align}
        &\partial_t u = a_1 f u + a_2 g - a_3 u^2 \label{holdereq}\\
        &u(x,0) = u_0(x),\;\forall x \in \Omega
    \end{align} 
    is an $\alpha$-H\"older continuous function on $\bar{Q}_{T}:=\bar{\Omega} \times [0,T]$, where $a_i$ are positive constant for $i=1,2,3$.
\end{lem}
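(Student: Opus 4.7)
The plan is to exploit the fact that \eqref{holdereq} contains no spatial derivatives, so that for a.e. fixed $x\in\Omega$ it is merely a Riccati-type ODE in $t$ with $L^\infty$ coefficients; the spatial regularity of $u$ will then be inherited from the spatial regularity of $f,g,u_0$ via a linear ODE comparison between the traces at two different spatial points.

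First, I would record the temporal regularity. Since $u\in L^\infty(Q_T)$, $f,g\in L^\infty(Q_T)$, the right-hand side of \eqref{holdereq} is essentially bounded, so for a.e.\ $x\in\Omega$, $\partial_t u(x,\cdot)\in L^\infty(0,T)$ with a bound $M$ independent of $x$. Hence $u(x,\cdot)$ is Lipschitz in $t$ uniformly in $x$, and in particular $|u(x,t_1)-u(x,t_2)|\le M T^{1-\alpha/2}|t_1-t_2|^{\alpha/2}$.

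Next I would establish spatial $\alpha$-H\"older continuity uniformly in $t$. Fix $x_1,x_2\in\bar\Omega$ and set $v(t):=u(x_1,t)-u(x_2,t)$. Using the algebraic identity $u_1^2-u_2^2=(u_1+u_2)v$ and grouping terms, \eqref{holdereq} yields the linear scalar ODE
\begin{equation*}
\partial_t v(t) = \phi(t)\,v(t) + \psi(t),\qquad v(0)=u_0(x_1)-u_0(x_2),
\end{equation*}
where
\begin{equation*}
\phi(t):=a_1 f(x_1,t) - a_3\bigl[u(x_1,t)+u(x_2,t)\bigr],
\end{equation*}
\begin{equation*}
\psi(t):=a_1\bigl[f(x_1,t)-f(x_2,t)\bigr]u(x_2,t) + a_2\bigl[g(x_1,t)-g(x_2,t)\bigr].
\end{equation*}
The $L^\infty$ bounds on $u$ and $f$ give $\|\phi\|_{L^\infty(0,T)}\le C$, while the assumed $C^{\alpha,\alpha/2}$-regularity of $f,g$ together with the $L^\infty$-bound on $u$ give $|\psi(t)|\le C\,|x_1-x_2|^\alpha$ for all $t$, with $C$ depending only on the data and on $\|u\|_{L^\infty(Q_T)}$.

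Solving the linear ODE via the integrating factor yields
\begin{equation*}
v(t) = e^{\int_0^t\phi(s)\,ds}\,v(0) + \int_0^t e^{\int_s^t\phi(\tau)\,d\tau}\psi(s)\,ds,
\end{equation*}
so that $|v(t)|\le e^{CT}|u_0(x_1)-u_0(x_2)| + CT e^{CT}|x_1-x_2|^\alpha\le C(1+[u_0]_{C^\alpha(\bar\Omega)})|x_1-x_2|^\alpha$. Combining this uniform spatial $\alpha$-H\"older estimate with the temporal Lipschitz estimate via the triangle inequality $|u(x_1,t_1)-u(x_2,t_2)|\le|u(x_1,t_1)-u(x_1,t_2)|+|u(x_1,t_2)-u(x_2,t_2)|$ gives the desired $C^{\alpha,\alpha/2}(\bar Q_T)$ bound and closes the proof.

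I do not expect a serious obstacle here: because \eqref{holdereq} contains no diffusion, we are essentially doing Gr\"onwall on a one-dimensional ODE for each pair $(x_1,x_2)$, and the only care needed is the weak-solution interpretation, which is harmless since $u\in L^\infty(Q_T)$ implies $u(x,\cdot)\in W^{1,\infty}(0,T)$ for a.e.\ $x$ and the identity above holds a.e. The final continuous representative is then obtained by redefining $u$ on a null set of $x$'s.
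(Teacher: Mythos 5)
Your proposal is correct, and it takes a genuinely different route from the paper. The paper integrates the equation in time and estimates the full parabolic difference quotient $\bigl(u(x,t)-u(y,s)\bigr)/\max\{|x-y|^{\alpha},|t-s|^{\alpha/2}\}$ directly; because the terms $\int_0^t f(y,\xi)\bigl(u(x,\xi)-u(y,\xi)\bigr)\,d\xi$ and $\int_0^t \bigl(u(x,\xi)+u(y,\xi)\bigr)\bigl(u(x,\xi)-u(y,\xi)\bigr)\,d\xi$ are bounded by $\tau\,\|u\|_{C^{\alpha,\frac{\alpha}{2}}(\bar{Q}_\tau)}$ times known constants, this produces a self-referential inequality that must be absorbed by choosing $\tau$ small, and the estimate is then propagated to all of $[0,T]$ by the bootstrap/induction argument of~\cref{bootstrap:arg}. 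You avoid this entirely by decoupling space and time: the temporal Lipschitz bound follows from boundedness of the right-hand side, and for the spatial increment you observe that $v(t)=u(x_1,t)-u(x_2,t)$ solves a \emph{linear} ODE whose coefficient $\phi$ is controlled by $\|u\|_{L^{\infty}}$ and $\|f\|_{L^{\infty}}$ and whose source $\psi$ is controlled by the H\"older seminorms of $f,g$ and $\|u\|_{L^{\infty}}$ only, so Gr\"onwall gives $|v(t)|\leqslant C\bigl(1+[u_0]_{C^{\alpha}(\bar{\Omega})}\bigr)|x_1-x_2|^{\alpha}$ on all of $[0,T]$ in one step, with the factor $e^{CT}$ replacing the smallness-plus-continuation mechanism. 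What each approach buys: yours is shorter and gives the global-in-time estimate directly with explicit constants; the paper's version, though more roundabout here, isolates the bootstrap technique that is reused in the proofs of Lemmas~\ref{sepa:lemma1},~\ref{lemma2} and~\ref{3d:lemma2}, where the self-referential structure cannot be avoided. Your handling of the weak-solution issue (the ODE identity holds for a.e.\ $x$, estimates hold on a full-measure set of pairs, and one passes to the continuous representative) is adequate and in fact more explicit than the paper, which tacitly works with the integrated equation pointwise.
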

\begin{proof}
Proof can be found in Appendix.    
\end{proof}
\begin{remark*}
    One important technique, bootstrap argument, can be found in~\cref{bootstrap:arg} of the proof of~\cref{holderlemma}.
\end{remark*}
\begin{lem}\label{max:ucun}
    Under the assumption~\eqref{h1}-\eqref{h4}, for any fixed $T>0$, dimension $\sd \leqslant 2$ and some constants $\chi_{11},\chi_{21}\in \R$, a solution $(u_i)_{i\in\{C,N\}}$ to the system~\eqref{equ:1}-\eqref{equ:6} with boundary condition~\eqref{bc} and initial condition~\eqref{ic} must satisfy the following a priori estimate: 
    \[\|u_i\|_{L^{\infty}(Q_T)}\leqslant C,\; \forall i \in \{C,N\},\]
    where $C$ depends only on $T$ and known data. 
\end{lem}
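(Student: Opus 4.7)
The plan is a four-step bootstrap that culminates in a Moser-type iteration exploiting the two-dimensional Ladyzhenskaya embedding $V_2(Q_T)\hookrightarrow L^4(Q_T)$.

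First, I would upgrade $u_A,\,u_I,\,u_P$ to Hölder-continuous functions on $\bar Q_T$. From Lemma~\ref{ap:v2} we already have $u_V,u_A,u_I,u_P\in L^\infty(Q_T)$ and $u_C,u_N\in L^3(Q_T)$, so the right-hand sides of \eqref{equ:4}-\eqref{equ:6} lie in $L^3(Q_T)$, which embeds into $\Camp{2}{\mu}$ for some $\mu>0$ when $\sd\leqslant 2$. Iterating Lemma~\ref{camponato} --- exactly as in Parts 4 and 8 of the proof of Lemma~\ref{ap:v2}, except that the bootstrap chain is shorter in two dimensions --- yields $u_A,u_I,u_P\in\Hold{\alpha}$ for some $\alpha\in(0,1)$.

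Second, with the coefficient $\mu_V-\alpha_{31}u_P-\alpha_{32}u_I$ and source $\alpha_{33}u_A u_I$ of the ODE \eqref{equ:3} now Hölder continuous, Lemma~\ref{holderlemma} delivers $u_V\in\Hold{\alpha}$. One more application of Lemma~\ref{camponato}, now fed with Hölder right-hand sides, promotes $\nabla u_A,\nabla u_I,\nabla u_P$ to $\Camp{2}{\mu}$ with $\mu>\sd$, and hence to $L^p(Q_T)$ for some $p>2$; differentiating \eqref{equ:3} in space as in Lemma~\ref{bc:v} produces a scalar linear ODE for $\nabla u_V$ whose solution inherits the same $L^p$-integrability via hypothesis~\eqref{hypo:init}.

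Third, with $\nabla u_A,\nabla u_I,\nabla u_V,\nabla u_P\in L^p(Q_T)$ for some $p>2$, I would run a Moser iteration on the coupled pair $(u_C,u_N)$. Testing \eqref{equ:1} by $u_C^{2q-1}$ and \eqref{equ:2} by $u_N^{2q-1}$, integrating by parts, and adding, the ``non-cross'' drift contributions can be absorbed by Cauchy's inequality using the $L^p$ bounds above together with the saturation $B(\cdot)\leqslant 1$; the cross-diffusion collapses into $\int[\chi_{11}u_C^{2q-2}B(u_C)+\chi_{21}u_N^{2q-2}B(u_N)]\,\nabla u_C\cdot\nabla u_N\,dx$, controlled via the constraint $(\chi_{11}+\chi_{21})^2<4d_C^{(0)}d_N^{(0)}$ already used in Part 6 of Lemma~\ref{ap:v2}; meanwhile \eqref{hypo:reaction} lets the dissipative $-(\mu_C/K_C)u_C^{2q+1}$ and $-(\mu_N/K_N)u_N^{2q+1}$ dominate the reaction cross-terms $\alpha_{11}u_N u_C^{2q}-\alpha_{21}u_N^{2q+1}u_C$. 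This yields uniform bounds on $\|u_C^q\|_{V_2(Q_T)}$ and $\|u_N^q\|_{V_2(Q_T)}$; the embedding $V_2(Q_T)\hookrightarrow L^4(Q_T)$ then doubles the exponent from $2q$ to $4q$, and Moser's geometric tracking of constants terminates with $\|u_C\|_{L^\infty(Q_T)}+\|u_N\|_{L^\infty(Q_T)}\leqslant C$.

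The main obstacle lies in the third step: $\nabla u_C$ and $\nabla u_N$ are only in $L^2(Q_T)$, so neither cross-gradient can be absorbed in isolation at arbitrarily high Moser level. The two features that close the argument are the saturation $B(\cdot)\leqslant 1$ from \eqref{hypo:bound}, which caps the cross-flux independently of how large $u_C$ or $u_N$ may grow, and the algebraic condition on $\chi_{11}+\chi_{21}$, which diagonalises the quadratic form in $(\nabla u_C,\nabla u_N)$ uniformly in the Moser exponent $q$; without either, the iteration would fail to close.
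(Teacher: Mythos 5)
Your Steps 1--2 broadly parallel the paper (Campanato bootstrap for $u_A,u_I,u_P$, then Lemma~\ref{holderlemma} and the ODE for $\nabla u_V$), but Step 3 contains a genuine gap that breaks the argument. Testing \eqref{equ:1} by $u_C^{2q-1}$ and \eqref{equ:2} by $u_N^{2q-1}$, the cross-diffusion contributions are
\begin{equation*}
(2q-1)\chi_{11}\int_{\Omega} B(u_C)\,u_C^{2q-2}\,\nabla u_N\cdot\nabla u_C\,dx
\quad\text{and}\quad
(2q-1)\chi_{21}\int_{\Omega} B(u_N)\,u_N^{2q-2}\,\nabla u_C\cdot\nabla u_N\,dx,
\end{equation*}
while the dissipation you have is $(2q-1)\bigl[d_C^{(0)}\int u_C^{2q-2}|\nabla u_C|^2+d_N^{(0)}\int u_N^{2q-2}|\nabla u_N|^2\bigr]$. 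Pointwise nonnegativity (or absorbability) of this quadratic form in $(\nabla u_C,\nabla u_N)$ would require
$\bigl(\chi_{11}B(u_C)u_C^{2q-2}+\chi_{21}B(u_N)u_N^{2q-2}\bigr)^2\leqslant 4d_C^{(0)}d_N^{(0)}u_C^{2q-2}u_N^{2q-2}$,
which fails for every $q>1$ on any region where, say, $u_N\to 0$ while $u_C\geqslant 1$: the left side stays of order $\chi_{11}^2u_C^{4q-4}$ (note $B(u_C)=1$ there, so the saturation does not help — the troublesome factor is the power weight, not $B$), while the right side vanishes. So the condition $(\chi_{11}+\chi_{21})^2<4d_C^{(0)}d_N^{(0)}$ diagonalises the form only at $q=1$ (where both weights equal $1$, i.e.\ Part~6 of Lemma~\ref{ap:v2}); it is emphatically not uniform in $q$, and the iteration does not close. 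This mismatch of weights is exactly the structural obstruction of the coupled cross-diffusion, and it is why the paper abandons energy/Moser methods here and instead closes the loop in Campanato norms: Lemma~\ref{camponato} gives $\|\nabla u_C\|_{\Camp{2}{\mu}}^2\leqslant \tilde{C}_1\chi_{11}^2\|\nabla u_N\|_{\Camp{2}{\mu}}^2+C$ and symmetrically for $u_N$, and the two cross terms are absorbed by requiring the \emph{smallness} conditions $\tilde{C}_1\chi_{11}^2<1$, $\tilde{C}_2\chi_{21}^2<1$ (this is the meaning of ``for some constants $\chi_{11},\chi_{21}$'' in the statement), after which $u_C,u_N\in C^{\alpha,\frac{\alpha}{2}}(\bar{Q}_T)$ and hence $L^\infty$. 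Your route, if it worked, would give the bound under the weaker ellipticity-type condition alone, but as written it does not.

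A secondary point: in Step 2 you pass from $\nabla u_A,\nabla u_I,\nabla u_P\in\Camp{2}{\mu}$ with $\mu>\sd$ to $L^p(Q_T)$ integrability with $p>2$. Morrey/Campanato control of the stated type does not by itself yield higher Lebesgue integrability, so this intermediate claim is unjustified; the paper avoids it by keeping the whole bootstrap inside the Campanato/H\"older scale (and handling $\nabla u_V$ through the explicit ODE formula~\eqref{uv:soln} in $\Camp{2}{2+2\alpha}$). This issue is repairable, but the failure of the weighted absorption in Step 3 is not, without a new idea or a smallness assumption of the paper's type.
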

\begin{proof}
By~\cref{camponato}, we have the following estimates:
\begin{align}
     \| \nabla u_C \|_{\Camp{2}{\mu}}^2 &\leqslant \tilde{C}_1 ( \|u_C\|_{L^2(Q_T)}^2+ \|\chi_{11}B(u_C)\nabla u_N\|_{\Camp{2}{\mu}}^2 + \|\chi_{12}B(u_C)\nabla u_A\|_{\Camp{2}{\mu}}^2 \notag \\
     &+\|\chi_{13}B(u_C)\nabla u_I\|_{\Camp{2}{\mu}}^2 + \|\chi_{14}B(u_C)\nabla u_V\|_{\Camp{2}{\mu}}^2+ \|\alpha_{11}u_Nu_C\|^2_{\Camp{2}{(\mu-2)^+}}   \notag\\
     & + \|\frac{\mu_C}{K_C}u_C^2\|_{\Camp{2}{(\mu-2)^+}}^2+\|(\mu_C-\delta_C)u_C\|_{\Camp{2}{(\mu-2)^+}}^2) \label{camp:uc}\\
      \| \nabla u_N \|_{\Camp{2}{\mu}}^2 &\leqslant \tilde{C}_2 ( \|u_N\|_{L^2(Q_T)}^2+ \|\chi_{21}B(u_N)\nabla u_C\|_{\Camp{2}{\mu}}^2 + \|\chi_{22}B(u_N)\nabla u_A\|_{\Camp{2}{\mu}}^2 \notag\\
      &+\|\chi_{23}B(u_N)\nabla u_I\|_{\Camp{2}{\mu}}^2 + \|\chi_{24}B(u_N)\nabla u_V\|_{\Camp{2}{\mu}}^2+ \|\alpha_{21}u_Nu_C\|^2_{\Camp{2}{(\mu-2)^+}}  \notag\\
     &  + \|\frac{\mu_N}{K_N}u_N^2\|_{\Camp{2}{(\mu-2)^+}}^2+\|(\mu_N-\delta_N)u_N\|_{\Camp{2}{(\mu-2)^+}}^2) \label{camp:un}\\
     \|\nabla u_A\|_{\Camp{2}{\mu}}^2 &\leqslant C ( \|u_A\|_{L^2(Q_T)}^2+ \|u_Au_I\|^2_{\Camp{2}{(\mu-2)^+}}+\|u_Au_C\|^2_{\Camp{2}{(\mu-2)^+}} \notag \label{camp:ua}\\
     & +\|u_C\|_{\Camp{2}{(\mu-2)^+}}^2)\\
     \|\nabla u_I\|_{\Camp{2}{\mu}}^2 &\leqslant C ( \|u_I\|_{L^2(Q_T)}^2+ \|u_Iu_A\|^2_{\Camp{2}{(\mu-2)^+}}+\|u_Iu_V\|^2_{\Camp{2}{(\mu-2)^+}}\notag\\
     & + \|u_P\|_{\Camp{2}{(\mu-2)^+}}^2)\\
     \|\nabla u_P\|_{\Camp{2}{\mu}}^2 &\leqslant C ( \|u_P\|_{L^2(Q_T)}^2+ \|u_Cu_A\|^2_{\Camp{2}{(\mu-2)^+}}+\|u_Vu_I\|^2_{\Camp{2}{(\mu-2)^+}}\notag\\
     & + \|u_P\|_{\Camp{2}{(\mu-2)^+}}^2),
\end{align}
where $\tilde{C}_1$, $\tilde{C}_2$ and $C$ are constants obtained from~\cref{camponato}.
   
    Again, we shall show the proof of the case, dimension $\sd=2$ only, since, for $\sd=1$, it will follow the same process.
    
    With estimates in lemma~\ref{ap:v2}, for $\alpha \in (0,1)$,
    we have the following estimate:
        \begin{align}
            \|\nabla u_A\|_{\Camp{2}{2}}^2 &\leqslant C ( \|u_A\|_{L^2(Q_T)}^2+ \|u_Au_I\|^2_{\Camp{2}{0}}+\|u_Au_C\|^2_{\Camp{2}{0}} +\|u_C\|_{\Camp{2}{0}}^2) \notag\\
            &\leqslant C ( \|u_A\|_{L^2(Q_T)}^2+ \|u_Au_I\|^2_{L^2(Q_T)}+\|u_Au_C\|^2_{L^2(Q_T)} +\|u_C\|^2_{L^2(Q_T)})\leqslant C, \label{grad:ua:2}\\
            \|\nabla u_A\|_{\Camp{2}{2+2\alpha}}^2 &\leqslant C ( \|u_A\|_{L^2(Q_T)}^2+ \|u_Au_I\|^2_{\Camp{2}{2\alpha}}+\|u_Au_C\|^2_{\Camp{2}{2\alpha}} +\|u_C\|_{\Camp{2}{2\alpha}}^2) \notag\\
            &\leqslant C ( \|u_A\|_{L^2(Q_T)}^2+ \|u_Au_I\|^2_{\Mor{2}{2}}+\|u_Au_C\|^2_{\Mor{2}{2}} +\|u_C\|_{\Mor{2}{2}}^2) \notag\\
            &\leqslant C ( \|u_A\|_{L^2(Q_T)}^2+ \|u_A\|^2_{\Camp{2}{2}}+\|u_C\|^2_{\Camp{2}{2}}) \notag \\
            &\leqslant C ( \|u_A\|_{L^2(Q_T)}^2+ \| u_A\|^2_{V_2(Q_T)}+\|u_C\|^2_{V_2(Q_T)}) \notag \\
            &\leqslant C,\label{grad:ua:2+}
        \end{align}
        where the second to the last inequality is due to~\eqref{camp:embed:2d}.

        Similarly, we would have the following estimates: 
        \begin{align}
            \|\nabla u_I\|_{\Camp{2}{2+2\alpha}}&\leqslant C,\label{grad:ui:2+}\\
             \|\nabla u_P\|_{\Camp{2}{2+2\alpha}}&\leqslant C,\label{grad:up:2+}
        \end{align}
        which implies $u_A,u_I,u_P\in C^{\alpha,\frac{\alpha}{2}}(\bar{Q}_T)$. 

        Now, we claim a H\"older estimate on equation \eqref{equ:3} for $u_V$. By previous H\"older estimates on $u_A,u_I,u_P$, we could have $\|\alpha_{33}u_A u_I\|_{C^{\alpha,\frac{\alpha}{2}}(\bar{Q}_T)}\leqslant C$, $\|u_P\|_{C^{\alpha,\frac{\alpha}{2}}(\bar{Q}_T)}\leqslant C$ and $\|u_I\|_{C^{\alpha,\frac{\alpha}{2}}(\bar{Q}_T)}\leqslant C$.  By lemma \ref{holderlemma}, we have $\|u_V\|_{C^{\alpha,\frac{\alpha}{2}}(\bar{Q}_T)}$.

        Furthermore, we claim that $\|\nabla u_V\|_{\Camp{2}{2+2\alpha}}\leqslant C$:
        \begin{claimproof}
            Take gradient on equation~\eqref{equ:3}, we obtain the following:
            \begin{align}
                \partial_t \nabla u_V &= - (\alpha_{31}u_P +\alpha_{32}u_I - \mu_V+ \frac{2\mu_V}{K_V} u_V)\nabla u_V  - \alpha_{31}u_V \nabla u_P + \alpha_{33}u_I \nabla u_A + (\alpha_{33} u_A- \alpha_{32}u_V) \nabla u_I ;
            \end{align}
            This implies:
            \begin{multline}
            \nabla u_V = \nabla u_{V_0} e^{\int_{0}^t - (\alpha_{31}u_P +\alpha_{32}u_I - \mu_V+ \frac{2\mu_V}{K_V} u_V) d\tau} + e^{\int_{0}^t - (\alpha_{31}u_P +\alpha_{32}u_I - \mu_V+ \frac{2\mu_V}{K_V} u_V) d\tau}\\ \int_{0}^{t} e^{\int_{0}^{s}(\alpha_{31}u_P +\alpha_{32}u_I - \mu_V+ \frac{2\mu_V}{K_V} u_V) d\tau}(- \alpha_{31}u_V \nabla u_P + \alpha_{33}u_I \nabla u_A + (\alpha_{33} u_A- \alpha_{32}u_V) \nabla u_I )ds\label{uv:soln}
            \end{multline} 
         Therefore, apply $\Camp{2}{2+2\alpha}$ estimate on both side on~\eqref{uv:soln} by estimates~\eqref{grad:ua:2+} -~\eqref{grad:up:2+} and~\eqref{hypo:init} and estimates in lemma~\ref{ap:v2}, we obtain:

        \begin{align}
            \|\nabla u_V\|_{\Camp{2}{2+2\alpha}} &\leqslant C \|\nabla u_{V_0}\|_{L^{{2},{2\alpha}}_C(\Omega) }+ C \|\nabla u_P\|_{\Camp{2}{2+2\alpha}} + C\|\nabla u_A\|_{\Camp{2}{2+2\alpha}} + C\|\nabla u_I\|_{\Camp{2}{2+2\alpha}},\notag
        \end{align}
        which implies
        \begin{equation}
             \|\nabla u_V\|_{\Camp{2}{2+2\alpha}} \leqslant C.
        \end{equation}
        
        \end{claimproof}
        Lastly, for estimate~\eqref{camp:uc} and~\eqref{camp:un}, when $\mu = 2$, by estimates~\eqref{grad:ua:2},~\eqref{grad:ui:2+},~\eqref{grad:up:2+} and~\eqref{hypo:bound},
        \begin{flalign}
            \| \nabla u_C \|_{\Camp{2}{2}}^2 &\leqslant \tilde{C}_1 ( \|u_C\|_{L^2(Q_T)}^2+ \|\chi_{11}B(u_C)\nabla u_N\|_{\Camp{2}{2}}^2 + \|\chi_{12}B(u_C)\nabla u_A\|_{\Camp{2}{2}}^2 \notag \\
         &+\|\chi_{13}B(u_C)\nabla u_I\|_{\Camp{2}{2}}^2 + \|\chi_{14}B(u_C)\nabla u_V\|_{\Camp{2}{2}}^2 + \|\alpha_{11}u_Nu_C\|^2_{\Camp{2}{0}}  \notag\\
        & + \|\frac{\mu_C}{K_C}u_C^2\|_{\Camp{2}{0}}^2)+\|(\mu_C-\delta_C)u_C^2\|_{\Camp{2}{0}}^2)\notag\\
        &\leqslant  \tilde{C}_1 \chi_{11}^2 \|\nabla u_N\|_{\Camp{2}{2}}^2+ C( \|u_C\|_{L^2(Q_T)}^2+ \|\nabla u_A\|_{\Camp{2}{2}}^2 +\|\nabla u_I\|_{\Camp{2}{2}}^2\notag \\
         & + \|\nabla u_V\|_{\Camp{2}{2}}^2  + \|u_Nu_C\|^2_{L^2(Q_T)} + \|u_C^2\|^2_{L^2(Q_T)})\notag\\
        &\leqslant \tilde{C}_1\chi_{11}^2\|\nabla u_N\|_{\Camp{2}{2}}^2+C, \label{ucmu} &&
        \end{flalign}
        where the last equality is due to Gagliardo-Nirenberg inequality which implies $V_2(Q_T) \hookrightarrow L^4(Q_T)$ when $\sd\leqslant 2$. And similarly,
        \begin{equation}
            \| \nabla u_N \|_{\Camp{2}{2}}^2 \leqslant \tilde{C}_2\chi_{21}^2\|\nabla u_C\|_{\Camp{2}{2}}^2+C.\label{unmu}
        \end{equation}
        Therefore, there exists $\chi_{11}$ and $\chi_{21}$ such that $\tilde{C}_1 \chi_{11}^2< 1$ and $\tilde{C}_2 \chi_{12}^2< 1$, then summing up estimate \eqref{ucmu} and \eqref{unmu} yields:
            \begin{equation}
                \| \nabla u_C \|_{\Camp{2}{2}}^2 +\| \nabla u_N \|_{\Camp{2}{2}}^2 \leqslant C,\,\label{camp:small}
            \end{equation}
        and implies,
            \begin{align}
                \|u_C\|_{\Camp{2}{4}}&\leqslant C,\\
                \|u_N\|_{\Camp{2}{4}}&\leqslant C
            \end{align}
        Then we have
            \begin{flalign}
                \| \nabla u_C \|_{\Camp{2}{2+2\alpha}}^2 &\leqslant \tilde{C}_1 ( \|u_C\|_{L^2(Q_T)}^2+ \|\chi_{11}B(u_C)\nabla u_N\|_{\Camp{2}{2+2\alpha}}^2 + \|\chi_{12}B(u_C)\nabla u_A\|_{\Camp{2}{2+2\alpha}}^2 \notag && \\
                &+\|\chi_{13}B(u_C)\nabla u_I\|_{\Camp{2}{2+2\alpha}}^2 + \|\chi_{14}B(u_C)\nabla u_V\|_{\Camp{2}{2+2\alpha}}^2   \notag\\
                &+ \|u_Nu_C\|^2_{\Camp{2}{2\alpha}} + \|u_C^2\|_{\Camp{2}{2\alpha}}^2)\notag\\
                &\leqslant \tilde{C}_1 \chi_{11}^2\|\nabla u_N\|_{\Mor{2}{2+2\alpha}}^2 + C ( \|u_C\|_{L^2(Q_T)}^2+  \|\nabla u_A\|_{\Mor{2}{2+2\alpha}}^2 \notag \\
                &+\|\nabla u_I\|_{\Mor{2}{2+2\alpha}}^2 + \|\nabla u_V\|_{\Mor{2}{2+2\alpha}}^2 + \underbrace{\|u_Nu_C\|^2_{\Mor{2}{2\alpha}}}_{(\ref{epsuc}.1)} + \underbrace{\|u_C^2\|_{\Mor{2}{2\alpha}}^2)}_{(\ref{epsuc}.2)}\label{epsuc}
        \end{flalign}
        and
    \begin{flalign}
        (\ref{epsuc}.1) &\leqslant C (\|u_N\|^2_{\Mor{4}{2\alpha} } +\|u_C\|^2_{\Mor{4}{2\alpha} })^{2}\notag &&\\
        &\leqslant C (\|\nabla u_N\|_{\Mor{2}{2\alpha}} +\|u_N\|_{\Mor{2}{2\alpha}}+ \|\nabla u_C\|_{\Mor{2}{2\alpha}}+\|u_C\|_{\Mor{2}{2\alpha}})^2 \leqslant C  \notag    
    \end{flalign}
    where the second to the last inequality is due to Gagliardo-Nirenberg inequality with dimension $\sd\leqslant 2$, and the last inequality holds due to the estimate~\eqref{camp:small}. Similarly,
    \begin{equation}
        (\ref{epsuc}.2) \leqslant C (\|\nabla u_C\|_{\Mor{2}{2\alpha}}+\|u_C\|_{\Mor{2}{2\alpha}})^2\leqslant C.
    \end{equation}

    Therefore, repeating the same process on~\eqref{camp:un}, we obtain a similar estimate as~\eqref{camp:small}:   
    \begin{equation}\label{sum:camp}
        \|\nabla u_C\|^2_{\Camp{2}{2+2\alpha}}+ \|\nabla u_N\|^2_{\Camp{2}{2+2\alpha}}\leqslant C,
    \end{equation}

    In particular, we obtain,
     \begin{equation}
     \|u_C\|_{\Camp{2}{4+2\alpha}}+ \|u_N\|_{\Camp{2}{4+2\alpha}} \leqslant C,
     \end{equation}
     which implies 
     \begin{equation}
        \|u_C\|_{C^{\alpha,\frac{\alpha}{2}}(\bar{Q}_T)}+ \|u_N\|_{C^{\alpha,\frac{\alpha}{2}}(\bar{Q}_T)} \leqslant C.
     \end{equation}
     
     This completes the proof.
\end{proof}

\section{Proof of theorem~\eqref{exist}}
In this section, we will show the proof of theorem~\eqref{exist} by separating the system into two existence and existence problems. For notation simplicity, we recall an index set notation, $\I:=\{C,N,V,A,I,P\}$.

\begin{lem}\label{sepa:lemma1}
    For arbitrary $T>0$, spatial dimension $\sd\leqslant 2$ and $\alpha$ satisfies~\eqref{hypo:init}, given $\hat{u}_V \in \{u:\|\nabla u\|_{\Camp{2}{\sd+2\alpha}}<\infty,\,\text{and}\,\nabla_{\vec{\nu}} u(x,t) = 0,\,\forall (x,t)\in\partial \Omega \times (0,T)\},$ the following system:
    \begin{flalign}
        \partial_t u_C - \nabla \cdot (d_C(x,t)\nabla u_C)=& -\nabla\cdot (\chi_{11} B(u_C)\nabla u_N +\chi_{12} B(u_C)\nabla u_A + \chi_{13} B(u_C)\nabla u_I\notag\\ 
        + \chi_{14} B(u_C)\nabla &\hat{u}_V) + \alpha_{11}u_N u_C + \mu_C u_C (1-\frac{u_C}{K_C})-\delta_C u_C;\label{ori:c}\\
        \partial_t u_N - \nabla \cdot (d_N(x,t)\nabla u_N)=& -\nabla\cdot (\chi_{21} B(u_N)\nabla u_C + \chi_{22} B(u_N)\nabla u_A + \chi_{23} B(u_N)\nabla u_I\notag\\ 
        + \chi_{24} B(u_N)\nabla &\hat{u}_V) -\alpha_{21} u_N u_C + \mu_N u_N (1-\frac{u_N}{K_N})-\delta_N u_N; &&\\
        \partial_t u_A - \nabla \cdot (d_A(x,t)\nabla u_A)=& - \alpha_{41}u_A u_I -\alpha_{42}u_A u_C + \mu_A u_C;\\
        \partial_t u_I - \nabla \cdot (d_I(x,t)\nabla u_I)=& - \alpha_{51}u_I u_A - \alpha_{52}u_I \hat{u}_V + \mu_I u_P; \\
        \partial_t u_P - \nabla \cdot (d_P(x,t)\nabla u_P)=& \alpha_{61} u_C u_A+ \alpha_{62}\hat{u}_V u_I - \delta_P u_P,\label{ori:p}
    \end{flalign}
    with Neumann type boundary condition~\eqref{bc}
    and initial condition~\eqref{ic} (excluding $u_{V_0}$), has a unique weak solution, for some $\chi_{11}$ and $\chi_{21}$.
\end{lem}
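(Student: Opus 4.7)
The plan is to apply Schauder's fixed-point theorem on a closed convex subset of $V_2(Q_T)^5$ whose radius is dictated by the apriori bounds from~\cref{ap:v2,max:ucun}. Since $\hat u_V$ is frozen with the Campanato regularity assumed on its gradient, and since $B(u_C),B(u_N)\in[0,1]$ by~\eqref{hypo:bound}, every cross-diffusion flux lies in $L^2(Q_T)$ as soon as the drift variables lie in $V_2(Q_T)$.

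First I would define the linearization operator
\[
\mathcal{T}:(\tilde u_C,\tilde u_N,\tilde u_A,\tilde u_I,\tilde u_P)\longmapsto(u_C,u_N,u_A,u_I,u_P)
\]
by substituting $\tilde u_j$ into every nonlinear reaction term and into both factors of the cross-diffusion fluxes, so that for instance $B(u_C)\chi_{11}\nabla u_N$ is replaced by $B(\tilde u_C)\chi_{11}\nabla\tilde u_N$. The five linearized equations then decouple into standard linear Neumann parabolic problems with $L^\infty$ coefficients and $L^2$-divergence data, each admitting a unique weak solution by classical theory~\cite{Ladyzen}. Repeating the energy computations of~\cref{ap:v2} and~\cref{max:ucun} on these linearized equations shows that $\mathcal{T}$ maps a sufficiently large $V_2$-ball into itself, provided $\chi_{11}$ and $\chi_{21}$ satisfy the smallness condition~\eqref{const:gamma}.

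Compactness of $\mathcal{T}$ in $L^2(Q_T)^5$ follows from Aubin--Lions, using the uniform $V_2(Q_T)^5$ image bound together with the $L^2(0,T;H^*(\Omega))^5$ bound on $\partial_t u_i$ derived in~\cref{ap:v2}. Continuity in the $L^2$ topology is the standard continuous dependence of linear parabolic solutions on their $L^\infty$-coefficient and divergence data, plus Lipschitz continuity of $B$. Schauder's theorem then produces a fixed point, which is the desired weak solution in the sense of~\cref{weak}. For uniqueness I would subtract two solutions, test each difference equation against itself, absorb the quadratic reaction differences using the $L^\infty$ bounds on $u_A,u_I,u_P$ from~\cref{ap:v2} and on $u_C,u_N$ from~\cref{max:ucun}, and absorb the cross-gradient pairing $(\chi_{11}+\chi_{21})\nabla(u_C^{(1)}-u_C^{(2)})\cdot\nabla(u_N^{(1)}-u_N^{(2)})$ into the diagonal $d_C^{(0)}|\nabla(u_C^{(1)}-u_C^{(2)})|^2+d_N^{(0)}|\nabla(u_N^{(1)}-u_N^{(2)})|^2$ via Cauchy--Schwarz with $\varepsilon$ and the assumption $(\chi_{11}+\chi_{21})^2<4d_C^{(0)}d_N^{(0)}$. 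Gronwall then forces all differences to vanish.

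The main obstacle is verifying that $\mathcal{T}$ is genuinely a self-map: the linearized $(u_C,u_N)$ equations carry divergence data built from $\nabla\tilde u_N$ and $\nabla\tilde u_C$ (together with already-bounded contributions from $\nabla u_A,\nabla u_I,\nabla\hat u_V$), and recovering the $V_2$-estimate~\eqref{ucun:v2} on the image requires repeating the same $\varepsilon$-absorption argument that produced that estimate in the first place. The boundedness of $B$ from~\eqref{hypo:bound} and the smallness of $\chi_{11}+\chi_{21}$ are both indispensable here, precisely because the drift variables $\tilde u_C,\tilde u_N$ are not yet bounded in $L^\infty$ during the iteration and must be controlled through their $V_2$ norm alone.
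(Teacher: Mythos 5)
Your uniqueness argument has a genuine gap. When you subtract two solutions and test with the differences, the cross-diffusion flux difference is not just $B(u_C^{(1)})\,(\chi_{11}\nabla\bar u_N+\cdots)$; it also contains $(B(u_C^{(1)})-B(u_C^{(2)}))\,(\chi_{11}\nabla u_N^{(2)}+\chi_{12}\nabla u_A^{(2)}+\chi_{13}\nabla u_I^{(2)}+\chi_{14}\nabla\hat u_V)$. After integration by parts this produces terms like $\int_\Omega|\bar u_C|\,|\nabla u_N^{(2)}|\,|\nabla\bar u_C|\,dx$, which cannot be absorbed by the $L^2$--Gr\"onwall scheme you describe: the $L^\infty$ bounds on the solutions do not help (they bound $u_N^{(2)}$, not $\nabla u_N^{(2)}$), and with only $\nabla u_N^{(2)},\nabla\hat u_V\in L^2$ (the $V_2$ information) even a Ladyzhenskaya-type interpolation fails, since closing Gr\"onwall this way would require $\nabla u_j^{(2)},\nabla\hat u_V\in L^4(Q_T)$, which is neither assumed nor proved. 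This is exactly why the paper's uniqueness proof does not stop at the $L^2$ energy inequality: it keeps the factor $\|\bar u_C\|_{L^\infty(\Omega)}^2\,\mathcal A_1$, upgrades the differences to $\Camp{2}{2}$ and $\Camp{2}{2+2\alpha}$ estimates via \cref{camponato} (using the smallness $\tilde C_1\chi_{11}^2<1$, $\tilde C_2\chi_{21}^2<1$), obtains an inequality of the form $\sum_i\|\nabla\bar u_i\|^2_{\Camp{2}{2+2\alpha}}\leqslant C^*(\|\nabla\bar u_C\|^2_{\Camp{2}{2+2\alpha}}+\|\nabla\bar u_N\|^2_{\Camp{2}{2+2\alpha}})\big(\int_0^T\mathcal A_1\,dt+\mathcal A_3(T)\big)$, and then chooses a small time $\tau$ so that the prefactor is below one, concluding uniqueness on $Q_\tau$ and bootstrapping to all of $Q_T$. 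Without some substitute for this H\"older/Campanato control of the differences and the small-time continuation, your uniqueness step does not close.

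There is also a weaker but real problem in your existence step. By freezing \emph{both} factors of the fluxes ($B(u_C)\nabla u_N\mapsto B(\tilde u_C)\nabla\tilde u_N$) you fully decouple the linearized system, but then the cross terms in the energy identity pair $\nabla\tilde u_N$ against $\nabla u_C$ rather than $\nabla u_N$ against $\nabla u_C$, so the cancellation that underlies \eqref{const:gamma} (combining $\chi_{11}$ and $\chi_{21}$ into $(\chi_{11}+\chi_{21})\nabla u_C\cdot\nabla u_N$ and absorbing it into $d_C^{(0)},d_N^{(0)}$) is no longer available; the self-map bound for your $V_2$-ball instead requires a smallness of $\chi_{11},\chi_{21}$ relative to the diffusion constants \emph{and} the ball radius, which is a different (and radius-dependent) condition than the one you cite. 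The paper avoids this by freezing only the coefficients $B(\tilde u_C),B(\tilde u_N)$ and the reaction factors, keeping the gradient coupling inside the linear system \eqref{lem1:equc}--\eqref{lem1:equp}, and running Schaefer's theorem in $(L^\infty(Q_T))^5$ with compactness supplied by the $C^{\alpha,\frac{\alpha}{2}}(\bar Q_T)$ estimates of \cref{max:ucun}; your Aubin--Lions/$L^2$ compactness route could be repaired (the weak convergence of $B(\tilde u_{C,n})\nabla\tilde u_{N,n}$ needs an argument, not just "continuous dependence"), but as written the self-map and continuity claims are not justified.
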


\begin{proof} 
     We shall prove the existence and uniqueness for the case when dimension $\sd=2$, and a similar process would be applied for $\sd=1$, which we would omit here.

    \textbf{Estimate:} Given $\hat{u}_V \in \{u: \|\nabla u\|_{\Camp{2}{\sd+2\alpha}}<\infty,\text{and}\,\nabla_{\vec{\nu}} u(x,t) = 0,\,\forall (x,t)\in\partial \Omega \times (0,T)\}$, 
    then with similar process in section 3, we obtain
    \begin{equation}\label{lemma1:estimate}
        \sum_{i\in\I\backslash \{V\}}\|\nabla u_i\|_{\Camp{2}{\sd+2\alpha}}\leqslant C \implies u_C,u_N,u_A,u_I,u_P\in  C^{\alpha,\frac{\alpha}{2}}(\bar{Q}_T) \cap V_2(Q_T).
    \end{equation}

    \textbf{Existence:} We shall use Schaefer fixed point theorem to prove the existence.

    Given $\tilde{u}:=(\tilde{u}_C,\tilde{u}_N,\tilde{u}_A,\tilde{u}_I,\tilde{u}_P)\in (L^{\infty}(Q_T))^5$, we have the following system of equations: 
    \begin{flalign}
    \partial_t u_C - \nabla \cdot (d_C(x,t)\nabla u_C)=& -\nabla\cdot (\chi_{11}B(\tilde{u}_C)\nabla u_N +\chi_{12}B(\tilde{u}_C) \nabla u_A + \chi_{13}B(\tilde{u}_C)\nabla u_I\notag\\ 
    + \chi_{14}B(u_C)\nabla &\hat{u}_V) + \alpha_{11}\tilde{u}_N u_C + \mu_Cu_C (1-\frac{\tilde{u}_C}{K_C})-\delta_C u_C;\label{lem1:equc}\\
    \partial_t u_N - \nabla \cdot (d_N(x,t)\nabla u_N)=& -\nabla\cdot (\chi_{21}B(\tilde{u}_N)\nabla u_C + \chi_{22}B(\tilde{u}_N)\nabla u_A + \chi_{23}B(\tilde{u}_N)\nabla u_I\notag\\ 
    + \chi_{24}B(u_N)\nabla &\hat{u}_V) -\alpha_{21} u_N \tilde{u}_C + \mu_N u_N (1-\frac{\tilde{u}_N}{K_N})-\delta_N u_N;\label{lem1:equn} &&\\
    \partial_t u_A - \nabla \cdot (d_A(x,t)\nabla u_A)=& - \alpha_{41}u_A \tilde{u}_I -\alpha_{42}\tilde{u}_A u_C + \mu_A u_C;\label{lem1:equa}\\
    \partial_t u_I - \nabla \cdot (d_I(x,t)\nabla u_I)=& - \alpha_{51}u_I \tilde{u}_A - \alpha_{52}u_I \hat{u}_V + \mu_I u_P; \label{lem1:equi}\\
    \partial_t u_P - \nabla \cdot (d_P(x,t)\nabla u_P)=& \alpha_{61} u_C\tilde{u}_A + \alpha_{62}\hat{u}_V u_I - \delta_P u_P,\label{lem1:equp}
\end{flalign}
with Neumann type boundary condition~\eqref{bc} and initial condition~\eqref{ic} (excluding $u_{V_0}$).

By the existence and uniqueness theorem of linear cross-diffusion advection parabolic equation~\cite{lieberman1996second}, we define the mapping, $M:(L^{\infty}(Q_T))^5\rightarrow (L^{\infty}(Q_T))^5$, i.e. \[M[(\tilde{u}_i)_{i\in\I\backslash \{V\}}] = (u_C,u_N,u_A,u_I,u_P) =: u,\]
which is a well-defined mapping.

Since we have shown apriori estimate in section 3, i.e. we know the set $S:=\{u: u = \lambda M[u],\forall \lambda\in [0,1]\}$ is bounded in $(L^{\infty}(Q_T))^5$-norm sense. Hence, we only need to show that the operator $M$ is compact and continuous.

To show $M$ is compact, we notice that with the similar estimate process, we have 
\begin{equation}
\|M[\tilde{u}]\|_{(V_2(Q_T))^5}\leqslant C.
\end{equation}
Hence, we can apply~\cref{camponato} to obtain that 
\begin{equation}
\|M[\tilde{u}]\|_{(C^{\alpha,\frac{\alpha}{2}}(\bar{Q}_T))^5}\leqslant C.
\end{equation}
Therefore, this mapping $M$ is an embedding of $(C^{\alpha,\frac{\alpha}{2}}(\bar{Q}_T))^5$ into $(L^{\infty}(Q_T))^5$, which is compact.

To show $M$ is continuous, we begin by $L^2$-energy estimate on a convergent sequence of given function $(\tilde{u}_j)_{j\in\N}$ in the sense of $L^{\infty}(Q_T)$-norm sense, which converges to $\tilde{U}$. Then, for some $\chi_{11},\,\chi_{21}$, satisfying $(\chi_{11}+\chi_{21})^2<4d_C^{(0)}d_N^{(0)}$, we obtain the following:
\begin{equation}
    \|M[\tilde{u}_j] - M[\tilde{U}]\|_{(V_2(Q_T))^5}\leqslant C(\|\tilde{u}_j - \tilde{U}\|_{(L^{\infty}(Q_T))^5} + \|\tilde{u}_{C_j}-\tilde{u}_C\|^2_{L^{\infty}(Q_T)} + \|\tilde{u}_{N_j}-\tilde{u}_N\|^2_{L^{\infty}(Q_T)}),
\end{equation}
where $C$ is independent of $j$. Hence, $M$ is continuous.

Also, we know that the sequence $(M[\tilde{u}_j])_{j\in \N}$ is uniformly bounded in $C^{\alpha,\frac{\alpha}{2}}(\bar{Q}_T)$, which is also equicontinuous. Therefore, by Arzelà–Ascoli theorem, we obtain a convergent subsequence, $(M[\tilde{u}_{j_k}])_{k\in\N}$, which converges to $U^*$. Due to the uniqueness of the limit in $L^2(Q_T)$, $U^* \equiv M[\tilde{U}]$ and also any convergent subsequence of $(M[u_j])_{j\in \N}$ converges to $M[\tilde{U}]$ in $L^{\infty}(Q_T)$-sense, which means $(M[u_j])_{j\in \N}$ converges to $M[\tilde{U}]$ in $L^{\infty}(Q_T)$-sense, i.e.
\[\lim_{j\rightarrow \infty} M[\tilde{u}_j] = M[\tilde{U}]= M[\lim_{n\rightarrow\infty} \tilde{u}_j].\]

\textbf{Uniqueness:} Given a fixed $\hat{u}_V$, suppose there are two distinct fixed point, which denoted by $z_1:=(u_C^{(1)},u_N^{(1)}, u_A^{(1)}, u_I^{(1)},u_P^{(1)})$ and $z_2:=(u_C^{(2)},u_N^{(2)}, u_A^{(2)}, u_I^{(2)},u_P^{(2)})$. Taking the difference of system of equations of fixed point $z_1$ and $z_2$, we have the following system of equations, with denoting the difference by $\bar{z}:= z_1-z_2 = (u_C^{(1)}-u_C^{(2)},u_N^{(1)}-u_N^{(2)}, u_A^{(1)}-u_A^{(2)}, u_I^{(1)}-u_I^{(2)},u_P^{(1)}-u_P^{(2)})=:(\bar{u}_C,\bar{u}_N,\bar{u}_A,\bar{u}_I,\bar{u}_P)$, $\bar{\hat{u}}_V:= \hat{u}_V^{(1)}-\hat{u}_V^{(2)}$ and an index set $\I_p:=\{(A,2),(I,3)\}$:\\

\begin{flalign}
    \partial_t \bar{u}_C &-\nabla\cdot(d_C(x,t)\nabla \bar{u}_C) = \sum_{(i,j)\in \I_p\cup\{(N,1)\}}-\nabla\cdot(\chi_{1j}B(u_C^{(1)})\nabla \bar{u}_i + \chi_{1j}(B(u_C^{(1)})-B(u_C^{(2)}))\nabla u_i^{(2)})\notag  &&\\
    &-\nabla\cdot( \chi_{14}(B(u_C^{(1)})-B(u_C^{(2)}))\nabla \hat{u}_V)+\alpha_{11}(\bar{u}_N u_C^{(1)}+\bar{u}_Cu_N^{(2)}) + (\mu_C - \delta_C)\bar{u}_C -\frac{\mu_C}{K_C}\bar{u}_C (u_C^{(1)}+u_C^{(2)})\label{uniq:c}\\
    \partial_t \bar{u}_N &-\nabla\cdot(d_N(x,t)\nabla \bar{u}_C) = \sum_{(i,j)\in \I\cup\{(C,1)\}}-\nabla\cdot(\chi_{2j} B(u_N^{(1)})\nabla \bar{u}_i + \chi_{2j}(B(u_N^{(1)})-B(u_N^{(2)}))\nabla u_i^{(2)}) \notag&&\\
    &-\nabla\cdot( \chi_{24}(B(u_N^{(1)})-B(u_N^{(2)}))\nabla \hat{u}_V)-\alpha_{21}(\bar{u}_N u_C^{(1)}+\bar{u}_Cu_N^{(2)}) + (\mu_N - \delta_N)\bar{u}_N - \frac{\mu_N}{K_N}\bar{u}_N (u_N^{(1)}+u_N^{(2)})\label{uniq:n}\\
    \partial_t \bar{u}_A&-\nabla\cdot(d_A(x,t)\nabla \bar{u}_A) =-\alpha_{41}(\bar{u}_A u_I^{(1)}+\bar{u}_Iu_A^{(2)})-\alpha_{42}(\bar{u}_A u_C^{(1)}+\bar{u}_Cu_A^{(2)}) + \mu_A \bar{u}_C\label{uniq:a}\\
    \partial_t \bar{u}_I&-\nabla\cdot(d_I(x,t)\nabla \bar{u}_I) =-\alpha_{51}(\bar{u}_A u_I^{(1)}+\bar{u}_Iu_A^{(2)})-\alpha_{52}\bar{u}_I\hat{u}_V + \mu_I \bar{u}_P\label{uniq:i}\\
    \partial_t \bar{u}_P&-\nabla\cdot(d_P(x,t)\nabla \bar{u}_P) =\alpha_{61}(\bar{u}_A u_C^{(1)}+\bar{u}_Cu_A^{(2)})+\alpha_{62}\bar{u}_I \hat{u}_V - \delta_P \bar{u}_P\label{uniq:p}
\end{flalign}
By applying the $L^2(\Omega)$ estimate on~\eqref{uniq:c}, we have 
    \begin{flalign}
        &\frac{1}{2}\frac{d}{dt} \int_{\Omega} \bar{u}_C^2dx + d_C^{(0)}\int_{\Omega}|\nabla \bar{u}_C|^2 dx\leqslant
         \underbrace{\int_{\Omega}\chi_{11}B(u_C^{(1)})\nabla \bar{u}_N\cdot \nabla \bar{u}_C + \chi_{11}(B(u_C^{(1)})-B(u_C^{(2)}))\nabla u_N^{(2)}\cdot \nabla \bar{u}_C dx}_{(\ref{uniq:c1}.1)}\notag &&\\
         &+\underbrace{\sum_{(i,j)\in \I}\int_{\Omega}\chi_{1j} B(u_C^{(1)})\nabla \bar{u}_i\cdot \nabla \bar{u}_C + \chi_{1j}(B(u_C^{(1)})-B(u_C^{(2)}))\nabla u_i^{(2)}\cdot\nabla \bar{u}_C dx}_{(\ref{uniq:c1}.2)} &&\notag\\
         &+\underbrace{\int_{\Omega}\chi_{14}((B(u_C^{(1)})-B(u_C^{(2)})))\nabla \hat{u}_V \cdot \nabla \bar{u}_C dx}_{(\ref{uniq:c1}.3)}&&\notag\\
         &+ \underbrace{\int_{\Omega}\alpha_{11}u_C^{(1)}\bar{u}_N \bar{u}_C+\alpha_{11}u_N^{(2)}\bar{u}_C^2 dx+ \int_{\Omega}(\mu_C-\delta_C)\bar{u}_C^2 dx-  \frac{\mu_C}{K_C} \int_{\Omega} \big(u_C^{(1)}+u_C^{(2)} \big) \bar{u}_C^2 dx }_{(\ref{uniq:c1}.4)} &&\label{uniq:c1}
    \end{flalign}
    we see that
    \begin{align*}
        (\ref{uniq:c1}.1)&\leqslant \int_{\Omega}\chi_{11}\nabla \bar{u}_N\cdot \nabla \bar{u}_C dx + \chi_{11}\int_{\Omega} (B(u_C^{(1)})-B(u_C^{(2)})) \nabla u_N^{(2)}\cdot \nabla \bar{u}_C dx
    \end{align*}

Notice that with~\eqref{hypo:bound}:
\begin{equation}\label{B:lip}
    |B(u_C^{(1)})-B(u_C^{(2)})|\leqslant |\bar{u}_C|,\,\forall (x,t) \in Q_T.
\end{equation}
Then by~\eqref{B:lip} and Cauchy inequality:
    \begin{flalign*}
        \int_{\Omega} (B(u_C^{(1)})-B(u_C^{(2)})) \nabla u_N^{(2)}\cdot \nabla \bar{u}_C dx &\leqslant \|\bar{u}_C\|_{L^{\infty}(\Omega)}\|\nabla \bar{u}_C\|_{L^{2}(\Omega)}\|\nabla u_N^{(2)}\|_{L^2(\Omega)}&&\\
        &\leqslant C\|\bar{u}_C\|^2_{L^{\infty}(\Omega)}\|\nabla u_N^{(2)}\|^2_{L^2(\Omega)}+\varepsilon\|\nabla \bar{u}_C\|^2_{L^{2}(\Omega)}.
    \end{flalign*}

    Hence, we obtain:
    \begin{flalign*}
        (\ref{uniq:c1}.1)\leqslant \int_{\Omega}\chi_{11} \nabla \bar{u}_N \cdot \nabla \bar{u}_C dx + C\|\bar{u}_C\|^2_{L^{\infty}(\Omega)}\|\nabla u_N^{(2)}\|^2_{L^2(\Omega)}+\varepsilon\|\nabla \bar{u}_C\|^2_{L^{2}(\Omega)}.&&
    \end{flalign*}

    Also, one can repeat the same process to estimate (\ref{uniq:c1}.2), we have:   
\begin{flalign*}
    (\ref{uniq:c1}.2)\leqslant &C(\|\nabla \bar{u}_A \|^2_{L^2(\Omega)}+\|\nabla \bar{u}_I \|^2_{L^2(\Omega)} )+ \varepsilon \|\nabla \bar{u}_C\|_{L^2(\Omega)}^2+ \|\bar{u}_C\|^2_{L^{\infty}(\Omega)}(\|\nabla u^{(2)}_I\|^2_{L^{2}(\Omega)}+\|\nabla u_A^{(2)}\|^2_{L^2(\Omega)});&&\\
    (\ref{uniq:c1}.3)\leqslant& C\|\bar{u}_C\|^2_{L^{\infty}(\Omega)}\|\nabla \hat{u}_V\|^2_{L^2(\Omega)}+\varepsilon\|\nabla \bar{u}_C\|^2_{L^{2}(\Omega)},&&
\end{flalign*}

With $L^{\infty}(Q_T)$ bounds for $u_C^{(i)}, u_N^{(i)}$, we have:     
\begin{flalign*}
    (\ref{uniq:c1}.4) \leqslant C(\|\bar{u}_C\|^2_{L^2(\Omega)}+\|\bar{u}_N\|^2_{L^2(\Omega)}),&&
\end{flalign*}

 Therefore, the $L^2(\Omega)$ estimate gives:
\begin{flalign}
        &\frac{1}{2} \frac{d}{dt} \int_{\Omega} \bar{u}_C^2dx + d_C^{(0)}\int_{\Omega}|\nabla \bar{u}_C|^2 dx \notag\\ 
        \leqslant & \int_{\Omega}\chi_{11} \nabla \bar{u}_N \cdot \nabla \bar{u}_Cdx + C\|\bar{u}_C\|^2_{L^{\infty}(\Omega)}(\|\nabla u_N^{(2)}\|^2_{L^2(\Omega)}+\|\nabla u^{(2)}_I\|^2_{L^{2}(\Omega)}+\|\nabla u_A^{(2)}\|^2_{L^2(\Omega)}+\|\nabla \hat{u}_V\|_{L^2(\Omega)}^2) \notag\\
        &C(\|\nabla \bar{u}_A \|^2_{L^2(\Omega)}+\|\nabla \bar{u}_I \|^2_{L^2(\Omega)} )+ \varepsilon \|\nabla \bar{u}_C\|_{L^2(\Omega)}^2+C(\|\bar{u}_C\|^2_{L^2(\Omega)}+\|\bar{u}_N\|^2_{L^2(\Omega)}) &&\label{summary:c}
\end{flalign}
Similarly, applying $L^2(\Omega)$ estimate on \eqref{uniq:n}, we obtain:
\begin{flalign}
        &\frac{1}{2} \frac{d}{dt} \int_{\Omega} \bar{u}_N^2dx +  d_N^{(0)}\int_{\Omega}|\nabla \bar{u}_N|^2 dx\notag\\
        \leqslant & \int_{\Omega}\chi_{21} \nabla \bar{u}_N \cdot \nabla \bar{u}_Cdx + C\|\bar{u}_N\|^2_{L^{\infty}(\Omega)}(\|\nabla u_C^{(2)}\|^2_{L^2(\Omega)}+\|\nabla u^{(2)}_I\|^2_{L^{2}(\Omega)}+\|\nabla u_A^{(2)}\|^2_{L^2(\Omega)}+\|\nabla \hat{u}_V\|_{L^2(\Omega)}^2) \notag\\
        &C(\|\nabla \bar{u}_A \|^2_{L^2(\Omega)}+\|\nabla \bar{u}_I \|^2_{L^2(\Omega)} )+ \varepsilon \|\nabla \bar{u}_N\|_{L^2(\Omega)}^2+C(\|\bar{u}_C\|^2_{L^2(\Omega)}+\|\bar{u}_N\|^2_{L^2(\Omega)}) && \label{summary:n}
\end{flalign}
With $L^{\infty}(Q_T)$ bounds for $u_C^{(i)}, u_N^{(i)},u_A^{(i)}, u_I^{(i)}, u_P^{(i)}$, for $i=1,2$, one can easily obtain $L^2(\Omega)$ estimates for \eqref{uniq:a} - \eqref{uniq:p}:
\begin{flalign}
    &\frac{1}{2}\frac{d}{dt} \int_{\Omega} \bar{u}_A^2dx +  d_A^{(0)}\int_{\Omega}|\nabla \bar{u}_A|^2 dx \leqslant C (\|\bar{u}_A\|^2_{L^2(\Omega)}+\|\bar{u}_I\|^2_{L^2(\Omega)}+\|\bar{u}_C\|^2_{L^2(\Omega)})\label{summary:a}\\
    &\frac{1}{2}\frac{d}{dt} \int_{\Omega} \bar{u}_I^2dx +  d_I^{(0)}\int_{\Omega}|\nabla \bar{u}_I|^2 dx\leqslant C (\|\bar{u}_A\|^2_{L^2(\Omega)}+\|\bar{u}_I\|^2_{L^2(\Omega)}+\|\bar{u}_P\|^2_{L^2(\Omega)})\label{summary:i}\\
    &\frac{1}{2}\frac{d}{dt} \int_{\Omega} \bar{u}_P^2dx +  d_P^{(0)}\int_{\Omega}|\nabla \bar{u}_P|^2 dx\leqslant C (\|\bar{u}_A\|^2_{L^2(\Omega)}+\|\bar{u}_I\|^2_{L^2(\Omega)}+\|\bar{u}_C\|^2_{L^2(\Omega)}+\|\bar{u}_P\|^2_{L^2(\Omega)})\label{summary:p}
\end{flalign}

Lastly, similar to section 3, for $\chi_{11}$ and $\chi_{21}$ small enough, we sum up~\eqref{summary:c} -~\eqref{summary:p} with certain weights and obtain the following estimate:
\begin{equation}
\sum_{i\in\I\backslash \{V\}} (\frac{d}{dt}\int_{\Omega} \bar{u}_i^2 dx + \int_{\Omega}|\nabla \bar{u}_i|^2 dx )  \leqslant  C\sum_{i\in\I\backslash \{V\}} \|\bar{u}_i\|_{L^2(\Omega)}^2+C \mathcal{A}_1(\|\bar{u}_C\|^2_{L^{\infty}(\Omega)}+\|\bar{u}_N\|^2_{L^{\infty}(\Omega)}),
\end{equation} 
where we denoted $\mathcal{A}_1: = \|\nabla u_C^{(2)}\|^2_{L^2(\Omega)}+\|\nabla u_N^{(2)}\|^2_{L^2(\Omega)}+\|\nabla u^{(2)}_I\|^2_{L^{2}(\Omega)}+\|\nabla u_A^{(2)}\|^2_{L^2(\Omega)}+\|\nabla \hat{u}_V\|_{L^2(\Omega)}^2$.

Therefore, by Gr\"onwall inequality and the initial condition, $\bar{u}_i(x,0) = 0,\,\forall x \in \Omega,i\in\I\backslash\{V\}$,
we obtain
\begin{equation}
    \sum_{i\in\I\backslash \{V\}} (\sup_{0<t<T}\|\bar{u}_i\|^2_{L^2(\Omega)}+\|\nabla \bar{u}_i\|^2_{L^2(Q_T)}) \leqslant C (\|\bar{u}_C\|^2_{L^{\infty}(Q_T)}+\|\bar{u}_N\|^2_{L^{\infty}(Q_T)}) \int_{0}^{T}\mathcal{A}_1\,dt  .
\end{equation}

Apply~\cref{camponato} on equations~\eqref{uniq:c} -~\eqref{uniq:p}, with similar process in lemma~\ref{max:ucun}, we obtain the following estimates:
\begin{flalign}
    \|\nabla\bar{u}_C\|^2_{\Camp{2}{2}}\leqslant & C(\|\bar{u}_C\|_{L^2(Q_T)}^2 + \|\bar{u}_N + \bar{u}_C\|_{\Camp{2}{0}}^2 + \|\bar{u}_C\|_{L^{\infty}(Q_T)}^2 \mathcal{A}_{2,N}(T))\notag\\
    &+ \tilde{C}_1 \chi_{11}^2\|\nabla \bar{u}_N\|^2_{\Camp{2}{2}} + C\|\nabla \bar{u}_A\|^2_{\Camp{2}{2}}+ C\|\nabla \bar{u}_I\|^2_{\Camp{2}{2}}\notag &&\\
    \leqslant& C(\|\bar{u}_C\|_{L^2(Q_T)}^2+ \|\bar{u}_N\|_{L^2(Q_T)}^2) + C\|\bar{u}_C\|_{L^{\infty}(Q_T)}^2 \mathcal{A}_{2,N}(T)\notag\\
    &+ \tilde{C}_1 \chi_{11}^2\|\nabla \bar{u}_N\|^2_{\Camp{2}{2}} + C\|\nabla \bar{u}_A\|^2_{\Camp{2}{2}}+ C\|\nabla \bar{u}_I\|^2_{\Camp{2}{2}}\label{uniq:campuc}
\end{flalign}
where we denoted $\mathcal{A}_{2,N}(T): =\|\nabla u_N^{(2)}\|_{\Camp{2}{2}}^2+|\nabla u_A^{(2)}\|_{\Camp{2}{2}}^2+|\nabla u_I^{(2)}\|_{\Camp{2}{2}}^2+ |\nabla \hat{u}_V\|_{\Camp{2}{2}}^2.$

Similarly, we have:
\begin{flalign}
    \|\nabla\bar{u}_N\|^2_{\Camp{2}{2}} \leqslant& C(\|\bar{u}_C\|_{L^2(Q_T)}^2+ \|\bar{u}_N\|_{L^2(Q_T)}^2) + C\|\bar{u}_N\|_{L^{\infty}(Q_T)}^2 \mathcal{A}_{2,C}(T)\notag&&\\
    &+ \tilde{C}_2 \chi_{21}^2\|\nabla \bar{u}_C\|^2_{\Camp{2}{2}} + C\|\nabla \bar{u}_A\|^2_{\Camp{2}{2}}+ C\|\nabla \bar{u}_I\|^2_{\Camp{2}{2}},\label{uniq:campun}
\end{flalign}
where we denoted $\mathcal{A}_{2,C}(T): =\|\nabla u_C^{(2)}\|_{\Camp{2}{2}}^2+|\nabla u_A^{(2)}\|_{\Camp{2}{2}}^2+|\nabla u_I^{(2)}\|_{\Camp{2}{2}}^2+ |\nabla \hat{u}_V\|_{\Camp{2}{2}}^2.$
By the assumption on $\chi_{11}$ and $\chi_{21}$ and summing up estimates~\eqref{uniq:campuc} and~\eqref{uniq:campun}, we have:
\begin{flalign}
    \|\nabla\bar{u}_C\|^2_{\Camp{2}{2}}+ \|\nabla\bar{u}_N\|^2_{\Camp{2}{2}}\leqslant& C(\|\bar{u}_C\|_{L^2(Q_T)}^2+ \|\bar{u}_N\|_{L^2(Q_T)}^2)+ C\|\nabla \bar{u}_A\|^2_{\Camp{2}{2}}+ C\|\nabla \bar{u}_I\|^2_{\Camp{2}{2}} \notag \\
     &+ C(\|\bar{u}_C\|_{L^{\infty}(Q_T)}^2 + \|\bar{u}_N\|_{L^{\infty}(Q_T)}^2 )\mathcal{A}_2(T) ,&&
\end{flalign}
where we denoted $\mathcal{A}_2(T): =  \sum_{i\in\I\backslash\{V\}}\|\nabla u_i^{(2)}\|_{\Camp{2}{2}}^2+ |\nabla \hat{u}_V\|_{\Camp{2}{2}}^2.$

Also, we have $\Camp{2}{2}$-estimates on $\bar{u}_A$, $\bar{u}_I$ and $\bar{u}_P$, which are:
\begin{flalign}
    &\|\nabla \bar{u}_A\|^2_{\Camp{2}{2}}\leqslant C(\|\bar{u}_A\|^2_{L^2(Q_T)}+\|\bar{u}_I\|^2_{L^2(Q_T)}+\|\bar{u}_C\|^2_{L^2(Q_T)})\\
    &\|\nabla\bar{u}_I\|^2_{\Camp{2}{2}}\leqslant C(\|\bar{u}_A\|^2_{L^2(Q_T)}+\|\bar{u}_I\|^2_{L^2(Q_T)}+\|\bar{u}_P\|^2_{L^2(Q_T)})\\
    &\|\nabla\bar{u}_P\|^2_{\Camp{2}{2}}\leqslant C(\|\bar{u}_A\|^2_{L^2(Q_T)}+\|\bar{u}_C\|^2_{L^2(Q_T)}+\|\bar{u}_I\|^2_{L^2(Q_T)}+\|\bar{u}_P\|^2_{L^2(Q_T)})\label{uniq:campup} && 
\end{flalign}

Therefore, by estimates~\eqref{uniq:campuc} -~\eqref{uniq:campup}, we have:
\begin{equation}\label{uniq:camp:2}
    \sum_{i\in\I\backslash\{V\}}\|\nabla\bar{u}_i\|^2_{\Camp{2}{2}}\leqslant C(\|\bar{u}_C\|_{L^{\infty}(Q_T)}^2 + \|\bar{u}_N\|_{L^{\infty}(Q_T)}^2 ) (\int_{0}^T \mathcal{A}_1 dt + \mathcal{A}_2(T))
\end{equation}

Then, we proceed with $\Camp{2}{2+2\alpha}$-estimates:
\begin{flalign}
    &\|\nabla \bar{u}_A\|^2_{\Camp{2}{2+2\alpha}}\leqslant C(\|\bar{u}_A\|^2_{L^2(Q_T)}+\|\bar{u}_A\|^2_{\Camp{2}{2\alpha}}+\|\bar{u}_I\|^2_{\Camp{2}{2\alpha}}+\|\bar{u}_C\|^2_{\Camp{2}{2\alpha}})\\
    &\|\nabla\bar{u}_I\|^2_{\Camp{2}{2+2\alpha}}\leqslant C(\|\bar{u}_I\|^2_{L^2(Q_T)}+\|\bar{u}_A\|^2_{\Camp{2}{2\alpha}}+\|\bar{u}_I\|^2_{\Camp{2}{2\alpha}}+\|\bar{u}_P\|^2_{\Camp{2}{2\alpha}})\\
    &\|\nabla\bar{u}_P\|^2_{\Camp{2}{2+2\alpha}}\leqslant C(\|\bar{u}_P\|^2_{L^2(Q_T)}+\|\bar{u}_A\|^2_{\Camp{2}{2\alpha}}+\|\bar{u}_C\|^2_{\Camp{2}{2\alpha}}+\|\bar{u}_I\|^2_{\Camp{2}{2\alpha}}+\|\bar{u}_P\|^2_{\Camp{2}{2\alpha}})\label{uniq:betaup} && 
\end{flalign}
\begin{flalign}
    \|\nabla\bar{u}_C\|^2_{\Camp{2}{2+2\alpha}}\leqslant & C(\|\bar{u}_C\|_{L^2(Q_T)}^2 +\|\bar{u}_C\|_{\Camp{2}{2\alpha}}^2+\|\bar{u}_N\|_{\Camp{2}{2\alpha}}^2) + C\|\bar{u}_C\|_{L^{\infty}(Q_T)}^2\mathcal{A}_{3,N}(T) \notag\\
    &+ \tilde{C}_1 \chi_{11}^2\|\nabla \bar{u}_N\|^2_{\Camp{2}{2+2\alpha}} + C\|\nabla \bar{u}_A\|^2_{\Camp{2}{2+2\alpha}}+ C\|\nabla \bar{u}_I\|^2_{\Camp{2}{2+2\alpha}} &&\\
    \|\nabla\bar{u}_N\|^2_{\Camp{2}{2+2\alpha}}\leqslant & C(\|\bar{u}_N\|_{L^2(Q_T)}^2 +\|\bar{u}_C\|_{\Camp{2}{2\alpha}}^2+\|\bar{u}_N\|_{\Camp{2}{2\alpha}}^2) + C\|\bar{u}_N\|_{L^{\infty}(Q_T)}^2\mathcal{A}_{3,C}(T) \notag\\
    &+ \tilde{C}_2 \chi_{21}^2\|\nabla \bar{u}_C\|^2_{\Camp{2}{2+2\alpha}} + C\|\nabla \bar{u}_A\|^2_{\Camp{2}{2+2\alpha}}+ C\|\nabla \bar{u}_I\|^2_{\Camp{2}{2+2\alpha}}\label{uniq:camp:b}&&
\end{flalign}
where we denoted 
\[\mathcal{A}_{3,j}(T):=\|\nabla u_j^{(2)}\|_{\Camp{2}{2+2\alpha}}^2+\|\nabla u_A^{(2)}\|_{\Camp{2}{2+2\alpha}}^2+\|\nabla u_I^{(2)}\|_{\Camp{2}{2+2\alpha}}^2+ \|\nabla \hat{u}_V\|_{\Camp{2}{2+2\alpha}}^2,\;j\in \{C,N\}.\]

Lastly, with estimates~\eqref{uniq:camp:2} -~\eqref{uniq:camp:b} and an embedding as following: $\forall u\in \{u:\|\nabla u \|_{\Camp{2}{2\alpha}}<\infty\}$, when $\sd\leqslant 2$,
\begin{equation}\label{cm:embedding}
    \|u\|_{\Camp{2}{2\alpha}}\leqslant C\|u\|_{\Mor{2}{2\alpha}}\leqslant C\|u\|_{\Mor{2}{2+2\alpha}}\leqslant C\|u\|_{\Camp{2}{2+2\alpha}}\leqslant C\|\nabla u\|_{\Camp{2}{2\alpha}},
\end{equation}

we obtain the following estimate:
\begin{flalign}
    \sum_{i\in\I\backslash\{V\}}\|\nabla\bar{u}_i\|^2_{\Camp{2}{2+2\alpha}}&\leqslant C(\|\bar{u}_C\|_{L^{\infty}(Q_T)}^2 + \|\bar{u}_N\|_{L^{\infty}(Q_T)}^2 ) (\int_{0}^T \mathcal{A}_1 dt + \mathcal{A}_3(T))\\
    &\leqslant C^*(\|\nabla\bar{u}_C\|_{\Camp{2}{2+2\alpha}}^2 + \|\nabla\bar{u}_N\|_{\Camp{2}{2+2\alpha}}^2 ) (\int_{0}^T \mathcal{A}_1 dt + \mathcal{A}_3(T))
\end{flalign}
where we denoted
$\mathcal{A}_3(T): =\sum_{i\in\I\backslash\{V,P\}}\|\nabla u_i^{(2)}\|_{\Camp{2}{2+2\alpha}}^2+ \|\nabla \hat{u}_V\|_{\Camp{2}{2+2\alpha}}^2$, and notably, $A_2(T)\leqslant CA_3(T)$ by Morrey space embedding.

If we choose $T = \tau$, for some $\tau$ small enough, such that $C^*(\int_{0}^{\tau} \mathcal{A}_1 dt + \mathcal{A}_3(\tau)) < 1$, then we have:
\begin{equation}
    \sum_{i\in\I\backslash\{V\}}\|u_i\|_{C^{\alpha,\frac{\alpha}{2}}(Q_{\tau})}\leqslant \sum_{i\in\I\backslash\{V\}}\|\nabla\bar{u}_i\|_{L^{2,2+2\alpha}_C(Q_{\tau})}\leqslant 0,
\end{equation}
which implies the uniqueness of the solution of the problem~\eqref{ori:c} -~\eqref{ori:p} with boundary conditions~\eqref{bc} and initial conditions \eqref{ic} on $\Omega \times (0,\tau)$. Since we have H\"older estimates on $\bar{Q}_{\tau}$, one can use bootstrap argument  to obtain global existence and uniqueness on $Q_T$ for any $T>0$.

This completes the proof of lemma \ref{sepa:lemma1}
\end{proof}

\begin{lem}\label{lemma2}
    For arbitrary $T>0$ and spatial dimension $\sd\leqslant 2$, the following equation:
\begin{flalign}
        &\partial_t u_V= - \alpha_{31}u_V u_P - \alpha_{32}u_V u_I + \alpha_{33}u_A u_I + \mu_V u_V(1-\frac{u_V}{K_V}),\;\forall (x,t)\in Q_{T};&&\label{contraction:equ}\\
        &u_V(x,0) = u_{V_0}(x), \;\forall x \in \Omega,\label{contraction:init}&&
    \end{flalign}
    where $u_{V_0}\in \{u|\|u\|_{C^{\alpha}(\Omega)}<\infty\,\text{and}\,\|\nabla u\|_{\Camp{2}{\sd+2\alpha}}<\infty\}$ and $u_P,u_I,u_A$ are defined as lemma~\ref{sepa:lemma1} depending on $u_V$, has a unique solution.    
\end{lem}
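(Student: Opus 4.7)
The plan is to formulate the coupling between $u_V$ and the parabolic block $(u_A,u_I,u_P)$ as a fixed-point problem on a short time interval $[0,\tau]$ and apply Banach's contraction principle, then continue the local solution up to $t=T$. Introduce the Banach space
\[
X_\tau := \left\{u \in C^{\alpha,\frac{\alpha}{2}}(\bar Q_\tau) : \|\nabla u\|_{L^{2,\sd+2\alpha}_C(Q_\tau)} < \infty,\ \nabla_{\vec{\nu}} u = 0 \text{ on } \partial\Omega \times (0,\tau)\right\},
\]
and define $\mathcal{T}: X_\tau \to X_\tau$ as follows: given $\hat u_V \in X_\tau$, Lemma \ref{sepa:lemma1} produces a unique $(u_C,u_N,u_A,u_I,u_P)$; set $\mathcal{T}[\hat u_V] := u_V$, where $u_V$ is the (pointwise-in-$x$) classical solution of \eqref{contraction:equ}--\eqref{contraction:init} with these $u_A,u_I,u_P$ as data.

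First I would verify that $\mathcal{T}$ stabilizes a closed ball of $X_\tau$. Writing \eqref{contraction:equ} in the form $\partial_t u_V = F(x,t)\,u_V - \tfrac{\mu_V}{K_V}u_V^2 + H(x,t)$ with $F:=-\alpha_{31}u_P-\alpha_{32}u_I+\mu_V$ and $H:=\alpha_{33}u_A u_I$, Duhamel's representation expresses $u_V$ explicitly in terms of $u_{V_0}$, $F$ and $H$. Differentiating in $x$ as in \eqref{uv:soln} and exploiting the Campanato bounds for $\nabla u_A,\nabla u_I,\nabla u_P$ supplied by Lemma \ref{sepa:lemma1}, together with $\|\nabla u_{V_0}\|_{L^{2,\sd+2\alpha}_C(\Omega)}<\infty$ from \eqref{hypo:init}, yields an $L^{2,\sd+2\alpha}_C$-bound on $\nabla u_V$; the H\"older regularity follows from the Campanato-to-H\"older embedding of Lemma \ref{camponato}, while the Neumann trace is propagated exactly as in Lemma \ref{bc:v}.

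Next I would show $\mathcal{T}$ is a contraction on that ball for $\tau$ small. For $\hat u_V^{(1)},\hat u_V^{(2)}$ in the ball, let $(u_j^{(i)})_{j\in\I\setminus\{V\}}$ be the corresponding parabolic outputs. Rerunning the uniqueness computation of Lemma \ref{sepa:lemma1} while carrying $\hat u_V^{(1)}-\hat u_V^{(2)}$ as an extra source yields a stability estimate
\[
\sum_{j \in \I\setminus\{V\}} \|\nabla (u_j^{(1)} - u_j^{(2)})\|^2_{L^{2,\sd+2\alpha}_C(Q_\tau)} \leqslant C\,\Psi(\tau)\, \|\hat u_V^{(1)} - \hat u_V^{(2)}\|_{X_\tau}^2,
\]
with $\Psi(\tau)\to 0$ as $\tau\to 0$. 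Subtracting the two ODEs gives a linear ODE for $\bar u_V := u_V^{(1)}-u_V^{(2)}$ whose source depends linearly on the differences $u_j^{(1)}-u_j^{(2)}$ and $\hat u_V^{(1)}-\hat u_V^{(2)}$; applying Gr\"onwall first to $\bar u_V$ and then, after differentiating in $x$, to $\nabla\bar u_V$ in the Campanato norm furnishes
\[
\|\mathcal{T}[\hat u_V^{(1)}] - \mathcal{T}[\hat u_V^{(2)}]\|_{X_\tau} \leqslant K(\tau)\, \|\hat u_V^{(1)} - \hat u_V^{(2)}\|_{X_\tau},\qquad K(\tau)\to 0.
\]
Choosing $\tau$ so that $K(\tau)<1$, Banach's fixed-point theorem produces a unique solution on $[0,\tau]$, and a continuation argument extends it to $[0,T]$ since the global bounds of Lemmas \ref{ap:v2} and \ref{max:ucun} keep the Campanato norms of all components uniformly controlled and thus prevent the step size from shrinking to zero.

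The main obstacle is the Campanato stability estimate for $(u_A,u_I,u_P)$ with respect to $\hat u_V$: the feedback from $u_V$ enters through the reaction terms in \eqref{lem1:equi} and \eqref{lem1:equp}, and bounding the resulting differences in $L^{2,\sd+2\alpha}_C(Q_\tau)$ rather than in $L^2(Q_\tau)$ requires running the Campanato machinery of Lemma \ref{sepa:lemma1} line by line on the difference system. Fortunately, this is essentially the same computation already performed in the uniqueness half of Lemma \ref{sepa:lemma1} and can be imported with only minor bookkeeping, the payoff being that the factor $\Psi(\tau)$ from the integration in time can be made arbitrarily small, which is precisely what drives the contraction.
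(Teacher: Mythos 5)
Your proposal follows essentially the same route as the paper's proof: a Banach contraction argument on the Campanato-gradient space $\{u:\|\nabla u\|_{\Camp{2}{\sd+2\alpha}}<\infty\}$, importing the Lipschitz dependence of $(u_A,u_I,u_P)$ on $\hat u_V$ from the uniqueness computation of Lemma~\ref{sepa:lemma1}, gaining smallness from the time integration in the ODE for $u_V$, and then extending from the small interval to $[0,T]$ by continuation/bootstrap. One small caution: your claim $\Psi(\tau)\to 0$ for the parabolic stability constant is neither needed nor established in the paper (there the Lipschitz constant is merely bounded after $\tau$ is taken small enough to absorb the self-referential terms); the contraction factor comes solely from the $\int_0^t$ in the ODE difference, which your Gr\"onwall/Campanato step on $\nabla\bar u_V$ indeed supplies, so the argument still closes.
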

\begin{proof}
    Defined a norm space, for $\sd\leqslant 2$, $X:=\{u|\|\nabla u\|_{\Camp{2}{\sd+2\alpha}}< \infty\}$, and given $\hat{u}\in X$, we construct a mapping, $M_2[\hat{u}_V] = u_V$ as following:
    \begin{flalign}
        &\partial_t u_V= - \alpha_{31}\hat{u}_V u_P - \alpha_{32}\hat{u}_V u_I + \alpha_{33}u_A u_I + \mu_V \hat{u}_V(1-\frac{\hat{u}_V}{K_V}),\;\forall (x,t)\in Q_T; \label{contraction:v}&&\\
        &u_V(x,0) = u_{V_0}(x), \;\forall x \in \Omega,&&
    \end{flalign}
    where $u_P,u_I,u_A$ are defined as lemma~\ref{sepa:lemma1} depends on $\hat{u}_V$. By the standard existence and uniqueness theory of ODE, this mapping $M_2$ is well-defined. With similar process in section 3, we obtain that $\|u_V\|_X\leqslant C$, i.e. $M_2:X\rightarrow X$. We shall show $M_2$ is a contraction mapping for $\sd=2$. Again, similar process would be applied to dimension $n=1$ case which we will omit here.

    Given $\hat{u}_V^{(1)},\hat{u}_V^{(2)}$, we would have corresponding solutions $(u_i^{(1)})_{i\in \I}$ and $(u_i^{(2)})_{i\in \I}$ respectively. Also, we denoted $\bar{u}_i := u_i^{(1)}-u_i^{(2)}$ for $i\in\I\backslash\{V\}$, $\bar{u}_V:=\hat{u}_V^{(1)}-\hat{u}_V^{(2)}$ and $\tilde{u}_V := u_V^{(1)}-u_V^{(1)}$.

    We claim that the solution in lemma~\ref{sepa:lemma1}, $u_I$, $u_P$, $u_A$, are Lipschitz functions on $\hat{u}_V$ in $\|\nabla\hat{u}_V\|_{\Camp{2}{3}}$ sense:
    \begin{claimproof}
    With similar process in the uniqueness proof in lemma~\ref{sepa:lemma1}, we obtain the following estimates:

    An $L^2$ energy estimate:
        \begin{equation}
            \sum_{i\in\I\backslash \{V\}} (\frac{d}{dt}\int_{\Omega} \bar{u}_i^2 dx + \int_{\Omega}|\nabla \bar{u}_i|^2 dx )  \leqslant  C\sum_{i\in\I} \|\bar{u}_i\|_{L^2(\Omega)}^2+C \mathcal{B}_1(\|\bar{u}_C\|^2_{L^{\infty}(\Omega)}+\|\bar{u}_N\|^2_{L^{\infty}(\Omega)}),
        \end{equation}
        where we denoted $\mathcal{B}_1: = \|\nabla u_C^{(2)}\|^2_{L^2(\Omega)}+\|\nabla u_N^{(2)}\|^2_{L^2(\Omega)}+\|\nabla u^{(2)}_I\|^2_{L^{2}(\Omega)}+\|\nabla u_A^{(2)}\|^2_{L^2(\Omega)}+\|\nabla \hat{u}^{(2)}_V\|_{L^2(\Omega)}^2$.
        
        By Gr\"onwall inequality and the initial condition, $\bar{u}_i(x,0) = 0$, $\forall x \in \Omega$, $i\in\I\backslash\{V\}$, we obtain:
        \begin{equation}
        \sum_{i\in\I\backslash \{V\}} (\sup_{0<t<T}\|\bar{u}_i\|^2_{L^2(\Omega)}+\|\nabla \bar{u}_i\|^2_{L^2(Q_T)}) \leqslant C (\|\bar{u}_C\|^2_{L^{\infty}(Q_T)}+\|\bar{u}_N\|^2_{L^{\infty}(Q_T)}) \int_{0}^{T}\mathcal{B}_1\,dt + C \|\bar{u}_V\|_{L^2(Q_T)}^2.
        \end{equation}
    A $\Camp{2}{2}$ estimate: 
    \begin{multline}
        \sum_{i\in\I\backslash\{V\}}\|\nabla\bar{u}_i\|^2_{\Camp{2}{2}}\leqslant C(\|\bar{u}_C\|_{L^{\infty}(Q_T)}^2 + \|\bar{u}_N\|_{L^{\infty}(Q_T)}^2 ) (\int_{0}^T \mathcal{B}_1 dt + \mathcal{B}_2(T)) \\
        + C \|\bar{u}_V\|_{L^2(Q_T)}^2+C\|\nabla \bar{u}_V\|^2_{\Camp{2}{2}},
    \end{multline}
    where we denoted $\mathcal{B}_2(T): =  \sum_{i\in\I\backslash\{V\}}\|\nabla u_i^{(2)}\|_{\Camp{2}{2}}^2+ |\nabla \hat{u}^{(2)}_V\|_{\Camp{2}{2}}^2$.

    A $\Camp{2}{2+2\alpha}$ estimate:
    \begin{multline}
        \sum_{i\in\I\backslash\{V\}}\|\nabla\bar{u}_i\|^2_{\Camp{2}{2+2\alpha}}\leqslant C^*(\|\nabla \bar{u}_C\|_{\Camp{2}{2+2\alpha}}^2 + \|\nabla\bar{u}_N\|_{\Camp{2}{2+2\alpha}}^2 ) (\int_{0}^T \mathcal{B}_1 dt + \mathcal{B}_3(T)) \\
        + C \|\bar{u}_V\|_{\Camp{2}{2\alpha}}^2+C\|\nabla \bar{u}_V\|^2_{\Camp{2}{2+2\alpha}},
    \end{multline}
    where we denoted $\mathcal{B}_3(T): =  \sum_{i\in\I\backslash\{V\}}\|\nabla u_i^{(2)}\|_{\Camp{2}{2+2\alpha}}^2+ |\nabla \hat{u}^{(2)}_V\|_{\Camp{2}{2+2\alpha}}^2$.

    Then we choose $T=\tau_1$ small enough, such that $C^*(\int_{0}^{\tau_1} \mathcal{B}_1 dt + \mathcal{B}_3(\tau_1))<1$ with embedding~\eqref{cm:embedding}, we obtain
    \begin{equation}\label{lip:camp}
         \sum_{i\in\I\backslash\{V\}}\|\nabla\bar{u}_i\|_{L^{2,2+2\alpha}_C(Q_{\tau_1})}\leqslant C\|\nabla \bar{u}_V\|_{L^{2,2+2\alpha}_C(Q_{\tau_1})}
    \end{equation}
    \end{claimproof}
    
    Proceed on the difference equation of~\eqref{contraction:v}, we have the following equation:
    \begin{multline}
        \partial_t \tilde{u}_V = -\alpha_{31}\bar{u}_V u_P^{(1)}-\alpha_{31}\bar{u}_P \hat{u}_V^{(2)}-\alpha_{32}\bar{u}_V u_I^{(1)}-\alpha_{32}\bar{u}_I \hat{u}_V^{(2)}\\
        +\alpha_{33}\bar{u}_A u_I^{(1)}+\alpha_{33}\bar{u}_I u_A^{(2)} + \mu_V \bar{u}_V-\frac{\mu_V}{K_V}\bar{u}_V(\hat{u}_V^{(1)}+\hat{u}_V^{(2)})
    \end{multline}

    Integrating both side on $(0,\tau)$, and with initial condition $\tilde{u}_V(x,0) = 0$, $\forall x \in \Omega$, we have:
    \begin{multline}
        \tilde{u}_V = \int_{0}^{t}-\alpha_{31}\bar{u}_V u_P^{(1)}-\alpha_{31}\bar{u}_P \hat{u}_V^{(2)}-\alpha_{32}\bar{u}_V u_I^{(1)}-\alpha_{32}\bar{u}_I \hat{u}_V^{(2)}\\
        +\alpha_{33}\bar{u}_A u_I^{(1)}+\alpha_{33}\bar{u}_I u_A^{(2)} + \mu_V \bar{u}_V-\frac{\mu_V}{K_V}\bar{u}_V(\hat{u}_V^{(1)}+\hat{u}_V^{(2)}) ds
    \end{multline}

    Then take gradient and apply $L^{2,2+2\alpha}_C(Q_{\tau_2})$ norm, where $\tau_2 \leqslant \tau_1$, we have:
    \begin{flalign}
         \|\nabla \tilde{u}_V\|^2_{L^{2,2+2\alpha}_C(Q_{\tau_2})} &\leqslant \tau_2 C(\|\nabla \bar{u}_V\|^2_{L^{2,2+2\alpha}_C(Q_{\tau_2})}+\|\nabla \bar{u}_P\|^2_{L^{2,2+2\alpha}_C(Q_{\tau_2})}+\|\nabla \bar{u}_A\|^2_{L^{2,2+2\alpha}_C(Q_{\tau_2})}+\|\nabla \bar{u}_I\|^2_{L^{2,2+2\alpha}_C(Q_{\tau_2})})\notag\\
         &\leqslant  \tau_2 C^{**}\|\nabla \bar{u}_V\|^2_{L^{2,2+2\alpha}_C(Q_{\tau_2})},\label{contraction:uv}
    \end{flalign}
    where the last inequality is due to Lipschitz condition estimate~\eqref{lip:camp}. As we pick $\tau_2$ small enough, such that $ \tau_2 C^{**} <1$, the mapping $M_2$ is contracting on $(0,\tau_2)$. Hence, there exists a unique solution of equation~\eqref{contraction:equ} with initial condition~\eqref{contraction:init} on $Q_{\tau_2}$. Lastly, by continuity argument and bootstrap argument,
    we can extend the time interval from $(0,\tau_2)$ to $(0,T)$, $\forall T>0$. This completes the proof of lemma~\ref{lemma2}.
\end{proof}

With lemma~\ref{sepa:lemma1} and~\ref{lemma2}, we proved the existence and uniqueness of the solution to the system~\eqref{equ:1} -~\eqref{equ:6} with boundary condition~\eqref{bc} and the initial condition~\eqref{ic}. This complete the proof of theorem~\ref{exist}.

\section{Proof of theorem~\eqref{exist_3d}}
In this section, we show the proof of theorem~\eqref{exist_3d} by decoupling the system into two sub-problems as the following two lemmas:
\begin{lem}\label{3d:lemma1}
    For arbitrary $T>0$, spatial dimension $\sd=3$, given $\hat{u}_C \in \{u:\|u\|_{L^3(Q_T)}\,\text{and}\,u\geqslant 0\}$, the following system: 
    \begin{flalign}
        \partial_t u_V &= -\alpha_{31} u_V u_P - \alpha_{32} u_V u_I +  \alpha_{33} u_A u_I + \mu_V u_V (1 - \frac{u_V}{K_V}),\label{special:term:cts:uv}\\
    \partial_t u_A &- \nabla \cdot (d_A(x,t)\nabla u_A)= - \alpha_{41}u_A u_I -\alpha_{42}u_A \hat{u}_C + \mu_A \hat{u}_C,\label{special:term:cts:ua}\\
        \partial_t u_I &- \nabla \cdot (d_I(x,t)\nabla u_I)= - \alpha_{51}u_I u_A - \alpha_{52}u_I u_V + \mu_I u_P, \label{special:term:cts:ui}\\
        \partial_t u_P &- \nabla \cdot (d_P(x,t)\nabla u_P)= \alpha_{61} \hat{u}_C u_A+ \alpha_{62}u_V u_I - \delta_P u_P,\label{special:term:cts:up}
    \end{flalign}
    with boundary conditions:
    \[\nabla_{\vec{\nu}} u_A (x,t) = \nabla_{\vec{\nu}} u_I (x,t) = \nabla_{\vec{\nu}} u_P (x,t) = 0,\,\forall (x,t)\in\Omega\times(0,T),\]
    and initial conditions:
     \[u_V(x,0) =u_A(x,0) =u_I(x,0) =u_P(x,0) = (u_{V_0}(x),u_{A_0}(x),u_{I_0}(x),u_{P_0}(x)),\, x\in\Omega\]
     satisfying $\eqref{hypo:init}$, has a unique weak solution in $V_2(Q_T)\cap L^{\infty}(Q_T)$.
\end{lem}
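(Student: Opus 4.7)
The plan is to adapt the Schaefer fixed point approach used in Lemma~\ref{sepa:lemma1}, but now applied to the smaller coupled sub-system \eqref{special:term:cts:uv}--\eqref{special:term:cts:up} with $\hat{u}_C\in L^3(Q_T)$ frozen as given data. Existence will follow by constructing a compact continuous map $M_3:(L^\infty(Q_T))^4\to(L^\infty(Q_T))^4$ that sends $(\tilde u_V,\tilde u_A,\tilde u_I,\tilde u_P)$ to the solution of the linearization obtained by freezing the cross-product factors on the previous iterate; uniqueness will follow from an $L^2$-energy difference estimate plus Gr\"onwall's inequality.

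The first task is the block of a priori estimates, which mirror Parts 1--4 of Lemma~\ref{ap:v2}. Non-negativity of each component follows from the maximum principle for the parabolic equations and from the integral representation for the ODE satisfied by $u_V$. To bound $u_A$, I set $w_A:=M_A-u_A$ with $M_A:=\max\{\|u_{A_0}\|_{L^\infty(\Omega)},\,\mu_A/\alpha_{42}\}$, so that
\[\partial_t w_A - \nabla\cdot(d_A\nabla w_A) + (\alpha_{41}u_I+\alpha_{42}\hat{u}_C)w_A = \alpha_{41}u_I M_A + (\alpha_{42}M_A-\mu_A)\hat{u}_C \geqslant 0,\]
and the weak maximum principle yields $\|u_A\|_{L^\infty(Q_T)}\leqslant M_A$ exactly as in Part~2 of Lemma~\ref{ap:v2}. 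The analogous comparison with $w_V=M_V-u_V$ then gives $\|u_V\|_{L^\infty(Q_T)}\leqslant M_V$. With $u_A,u_V$ now bounded, the coupled $L^2$ estimate for $u_I$ and $u_P$ reproduces \eqref{l2:ui}--\eqref{l2:up} almost verbatim (the factor $\hat{u}_C$ never enters those two equations), and Gr\"onwall delivers $\|u_I\|_{V_2(Q_T)}+\|u_P\|_{V_2(Q_T)}\leqslant C$.

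The main new obstacle is obtaining $\|u_I\|_{L^\infty(Q_T)}+\|u_P\|_{L^\infty(Q_T)}\leqslant C$ in dimension three when the source term $\alpha_{61}\hat{u}_C u_A$ in \eqref{special:term:cts:up} contains only an $L^3$-regular factor. I would resolve this exactly as in Part~8 of Lemma~\ref{ap:v2}: the Morrey--Campanato embedding $\|\hat{u}_C\|_{L^{2,5/3}_C(Q_T)}\leqslant C\|\hat{u}_C\|_{L^3(Q_T)}$ combined with the $L^\infty$ bound on $u_A$ gives $\|\hat{u}_C u_A\|_{L^{2,1+2\alpha}_C(Q_T)}\leqslant C$ for any $\alpha\in(0,1/3)$, and Lemma~\ref{camponato} applied to \eqref{special:term:cts:up} then produces an $L^{2,5+2\alpha}_C(Q_T)$ bound on $u_P$, hence $u_P\in C^{\alpha,\alpha/2}(\bar Q_T)$. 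Feeding this back into \eqref{special:term:cts:ui} and applying Lemma~\ref{camponato} once more yields the same H\"older regularity for $u_I$, and therefore an $L^\infty$ bound. This H\"older step is the delicate one; the rest of the a priori chain parallels the two-dimensional arguments.

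With the a priori estimates in hand, existence by Schaefer's fixed point theorem is routine: classical linear parabolic theory and ODE theory make $M_3$ well-defined, the $\lambda$-fixed-point set $\{u=\lambda M_3[u]:\lambda\in[0,1]\}$ is bounded in $(L^\infty(Q_T))^4$ by the above estimates applied to $\lambda$-scaled equations, the H\"older embedding $C^{\alpha,\alpha/2}(\bar Q_T)\hookrightarrow L^\infty(Q_T)$ provides compactness, and continuity of $M_3$ follows from an $L^2$ comparison of $M_3[\tilde u]-M_3[\tilde U]$. For uniqueness I would subtract two solutions and run an $L^2$-energy estimate as in the uniqueness part of Lemma~\ref{sepa:lemma1}, using the $L^\infty$ bounds on every component to close an inequality of the form $\tfrac{d}{dt}\sum_i\|\bar u_i\|_{L^2(\Omega)}^2\leqslant C\sum_i\|\bar u_i\|_{L^2(\Omega)}^2$, from which Gr\"onwall and zero initial data force $\bar u_i\equiv 0$ for $i\in\{V,A,I,P\}$.
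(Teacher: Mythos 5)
There is a genuine gap in the existence part: you place the ODE component $u_V$ inside the Schaefer map $M_3:(L^\infty(Q_T))^4\to(L^\infty(Q_T))^4$ and claim compactness from the embedding $C^{\alpha,\frac{\alpha}{2}}(\bar Q_T)\hookrightarrow L^\infty(Q_T)$. That works for $u_A,u_I,u_P$, whose equations are parabolic and fall under Lemma~\ref{camponato}, but it fails for $u_V$: equation~\eqref{special:term:cts:uv} has no spatial operator, so when the frozen inputs $(\tilde u_V,\tilde u_I,\tilde u_P)$ are merely $L^\infty(Q_T)$ (measurable in $x$), the ODE solution $u_V(x,\cdot)$ inherits no modulus of continuity in $x$ whatsoever. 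Hence the $V$-block of your map gains no spatial regularity, is not Hölder, and is not compact in $L^\infty(Q_T)$ (nor would Aubin--Lions help, since there is no spatial derivative bound); the Schaefer argument as proposed does not close. This is precisely why the paper does \emph{not} put $u_V$ into the compact fixed-point map: it first solves the parabolic triple $(u_A,u_I,u_P)$ for a \emph{given} $\hat u_V\in L^\infty(Q_T)$ (the map $M_3$, run along the lines of Lemma~\ref{sepa:lemma1} with the embedding $L^3(Q_T)\hookrightarrow\Camp{2}{\frac53}$), proves a Lipschitz-type dependence of $(u_A,u_I,u_P)$ on $\hat u_V$ in $V_2$ and in H\"older/Campanato norms, and then treats $u_V$ by a Banach \emph{contraction} map $M_4$ on $L^\infty(Q_{\tau})$ for small $\tau$ (choosing $\tau$ so that $C^\ast\|\hat u_C\|^2_{L^3(Q_\tau)}<\tfrac12$), recovering H\"older regularity of $u_V$ afterwards from Lemma~\ref{holderlemma} and extending to all $T$ by bootstrap. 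Your proposal needs this (or an equivalent) decoupling of the non-smoothing component; as written, the compactness step is wrong.

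A secondary, fixable omission: in the uniqueness energy estimate you claim to close $\frac{d}{dt}\sum_i\|\bar u_i\|_{L^2(\Omega)}^2\leqslant C\sum_i\|\bar u_i\|_{L^2(\Omega)}^2$ ``using the $L^\infty$ bounds on every component,'' but the coupling term coming from $\alpha_{61}\hat u_C u_A$ produces $\int_\Omega\hat u_C\,\bar u_A\,\bar u_P\,dx$ with $\hat u_C$ only in $L^3$; there is no $L^\infty$ bound for $\hat u_C$. One must estimate this term by H\"older and Gagliardo--Nirenberg, absorbing $\eta\|\nabla\bar u_A\|_{L^2}^2+\eta\|\nabla\bar u_P\|_{L^2}^2$ into the diffusion and keeping a time-dependent Gr\"onwall coefficient proportional to $\|\hat u_C(\cdot,t)\|_{L^3(\Omega)}^2$, which is integrable in $t$ since $\hat u_C\in L^3(Q_T)$ --- exactly the computation the paper carries out in its proof of this lemma. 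Your constant-coefficient Gr\"onwall claim skips this and would not follow as stated.
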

\begin{proof}
   We shall use the same strategy as we prove theorem~\ref{exist}. Consider the following two mappings, for spatial dimension $\sd=3$:  

   Given $(\tilde{u}_A,\tilde{u}_I,\tilde{u}_P) \in (L^{\infty}(Q_T))^3$ and $\hat{u}_V\in L^{\infty}(Q_T)$, the system:
   \begin{flalign}
   \partial_t u_A &- \nabla \cdot (d_A(x,t)\nabla u_A)= - \alpha_{41}u_A \tilde{u}_I -\alpha_{42}\tilde{u}_A \hat{u}_C + \mu_A \hat{u}_C,\label{ua:3d}\\
    \partial_t u_I &- \nabla \cdot (d_I(x,t)\nabla u_I)= - \alpha_{51}u_I \tilde{u}_A - \alpha_{52}u_I \hat{u}_V + \mu_I u_P,\label{ui:3d}\\
    \partial_t u_P &- \nabla \cdot (d_P(x,t)\nabla u_P)= \alpha_{61} \hat{u}_C\tilde{u}_A + \alpha_{62}\hat{u}_V u_I - \delta_P u_P,\label{up:3d}
    \end{flalign} 
    has a unique solution by standard linear parabolic PDE theory, which implies we have a well-defined mapping, $M_3[(\tilde{u}_A,\tilde{u}_I,\tilde{u}_P)] = (u_A,u_I,u_P)$. Follow the similar processes in lemma~\ref{sepa:lemma1} and an embedding:
    \[L^3(Q_T)\hookrightarrow \Camp{2}{\frac{5}{3}}\,\text{for dimension}\,d=3,\]
     we know that $M_3$ has a unique fixed point in $V_2(Q_T)\cap \Hold{\alpha}$, where $\alpha = \frac{1}{3}$.

    Then consider the other mapping, where we give  $\hat{u}_V\in L^{\infty}(Q_T)$, we construct:
    \begin{equation}
        \partial_t u_V = -\alpha_{31} \hat{u}_V u_P - \alpha_{32} \hat{u}_V u_I +  \alpha_{32} u_A u_I + \mu_V \hat{u}_V - \frac{\mu_V}{K_V}\hat{u}^2_V,
    \end{equation}
    where $(u_A,u_I,u_P)$ is the unique fix point of $M_3$ with given  $\hat{u}_V\in L^{\infty}(Q_T)$, and denote it as mapping $M_4$. By standard ODE theory, we know this mapping is a well-defined mapping, where $M_4[\hat{u}_V]= u_V$. Similar to the process in~\cref{lemma2}, we shall briefly show some different steps.

    Recall some notations in~\cref{lemma2}: given $\hat{u}_V^{(1)},\hat{u}_V^{(2)}$, we would have corresponding solutions $(u_i^{(1)})_{i\in \I}$ and $(u_i^{(2)})_{i\in \I}$ respectively, and we denoted $\bar{u}_i := u_i^{(1)}-u_i^{(2)}$ for $i\in\I\backslash\{V\}$ and $\bar{u}_V:=\hat{u}_V^{(1)}-\hat{u}_V^{(2)}$. Then a slightly different estimate strategy in the $L^2$ energy estimate is applied on the term $\int_{\Omega}\hat{u}_C \bar{u}_A \bar{u}_P dx$, which is:
    \begin{align}
        \int_{\Omega}\hat{u}_C \bar{u}_A \bar{u}_P dx &\leqslant \|\hat{u}_C\|_{L^3(\Omega)}\|\bar{u}_A \bar{u}_P\|_{L^{\frac{3}{2}}(\Omega)}\notag\\
        &\leqslant \frac{1}{2}\|\hat{u}_C\|_{L^3(\Omega)}(\|\bar{u}_A^2 + \bar{u}_P^2\|_{L^{\frac{3}{2}}(\Omega)})\notag\\
        &\leqslant \frac{1}{2}\|\hat{u}_C\|_{L^3(\Omega)}(\|\bar{u}_A^2\|_{L^{\frac{3}{2}}(\Omega)} + \|\bar{u}_P^2\|_{L^{\frac{3}{2}}(\Omega)})\notag\\
        &\leqslant \frac{1}{2}\sum_{i\in\{A,P\}}\|\hat{u}_C\|_{L^3(\Omega)} \|\bar{u}_i\|^2_{L^{3}(\Omega)}\notag\\
        &\leqslant C \sum_{i\in\{A,P\}}(\|\hat{u}_C\|_{L^3(\Omega)} \|\bar{u}_i\|_{L^{2}(\Omega)}\|\nabla \bar{u}_i\|_{L^2(\Omega)}+\|\hat{u}_C\|_{L^3(\Omega)} \|u_i\|^2_{L^2(\Omega)})\notag\\
        &\leqslant C \sum_{i\in\{A,P\}}((\frac{1}{4\eta}\|\hat{u}_C\|^2_{L^3(\Omega)} +\|\hat{u}_C\|_{L^3(\Omega)} )\|\bar{u}_i\|^2_{L^{2}(\Omega)} + \eta\|\nabla \bar{u}_i\|^2_{L^2(\Omega)})
    \end{align} 
    where $\eta = \min\{\frac{d_P^{(0)}}{3},\frac{d_A^{(0)}}{3}\}$ the first inequality is due to H\"older's Inequality, the second and the last one is due to Cauchy inequality, the third one is due to Minkowski inequality, the fifth one is due to Gagliardo-Nirenberg inequality for dimension $d=3$:
    \[\|u\|_{L^3(\Omega)}\leqslant C \|\nabla u\|^{\frac{1}{2}}_{L^2(\Omega)}\|u\|^{\frac{1}{2}}_{L^2(\Omega)} + C \|u\|_{L^2(\Omega)},\forall u \in H^1(\Omega).\] 
    Hence, one can have the following estimate:
    \begin{equation}
        \sum_{i\in\{A,I,P\}} \|\bar{u}_i\|^2_{V_2(Q_T)}\leqslant C\|\bar{u}_V\|^2_{L^2(Q_T)}.
    \end{equation}
    Then apply $L^{2,3+2\beta}_C(Q_T)$ estimate with $\beta=\min\{\frac{1}{3},\alpha\}$ where $\alpha$ satisfies~\eqref{hypo:init}, we shall use equation~\eqref{special:term:cts:up} as a representative and the rest will follow the same process:
    \begin{align}
        \|u_P\|^2_{\Hold{\beta}}\leqslant &C\|\nabla u_P\|^2_{L^{2,3+2\beta}_C(Q_T)} \notag\\
        \leqslant& C(\|\bar{u}_P\|^2_{L^2(Q_T)}+ \|\hat{u}_C \bar{u}_A\|^2_{L^{2,1+2\beta}_C(Q_T)} +\|u_I^{(1)} \bar{u}_V\|^2_{L^{2,1+2\beta}_C(Q_T)}\notag\\
        &+\|u_V^{(2)} \bar{u}_I\|^2_{L^{2,1+2\beta}_C(Q_T)} + \|\bar{u}_P\|^2_{L^{2,1+2\beta}_C(Q_T)}\notag\\
        \leqslant & C (\|\bar{u}_P\|^2_{L^2(Q_T)} + \|\hat{u}_C\|^2_{L^{2,1+2\beta}_C(Q_T)} \|\bar{u}_A\|^2_{L^{\infty}(Q_T)} + \|\bar{u}_V\|^2_{L^{\infty}(Q_T)}\|u_I^{(1)}\|^2_{L^{2,1+2\beta}_C(Q_T)}\notag\\
        &+\|\bar{u}_I\|^2_{L^{2,1+2\beta}_C(Q_T)}\|u_V^{(2)}\|_{L^{\infty}(Q_T)} + \|\bar{u}_P\|^2_{L^{2,1+2\beta}_C(Q_T)}\notag\\
        \leqslant& C^*\|\hat{u}_C\|^2_{L^3(Q_T)} \|\bar{u}_A\|^2_{L^{\infty}(Q_T)} + C\|\bar{u}_V\|^2_{L^{\infty}(Q_T)}, \label{camp:lemma51:1}
    \end{align}
    where the last inequality is due to the embedding and $L^{2,1+2\beta}_C(Q_T)$ gradient estimates.
    Similarly, we have:
    \begin{align}
        \|u_A\|^2_{\Hold{\beta}}\leqslant& C^*\|\hat{u}_C\|^2_{L^3(Q_T)} \|\bar{u}_A\|^2_{L^{\infty}(Q_T)} + C\|\bar{u}_V\|^2_{L^{\infty}(Q_T)},\\
        \|u_I\|^2_{\Hold{\beta}}\leqslant& C\|\bar{u}_V\|^2_{L^{\infty}(Q_T)}.\label{camp:lemma51:3}
    \end{align}
    We choose $T = \tau_3$ such that $C^*\|\hat{u}_C\|^2_{L^3(Q_{\tau})}<\frac{1}{2}$, sum up estimates~\eqref{camp:lemma51:1} -~\eqref{camp:lemma51:3} and obtain the following estimate:
    \begin{equation}
        \sum_{i\in\{A,I,P\}} \|\bar{u}_i\|_{L^{\infty}(Q_{\tau_3})}\leqslant C \|\bar{u}_V\|_{L^{\infty}(Q_{\tau_3})}.
    \end{equation}

    Hence, similar to estimate~\eqref{contraction:uv}, we pick $\tau_4\leqslant \tau_3$ and obtain that $M_4$ is a contraction mapping on $L^{\infty}(Q_{\tau_4})$. Then by~\cref{holderlemma} and bootstrap argument, one can extend this existence uniqueness result to arbitrary $T>0$.
    \begin{remark*}
        One can see that $u_V$ is actually in $\Hold{\beta}$, where $\beta:=\min\{\frac{1}{3},\alpha\}$ and $\alpha$ satisfies~\eqref{hypo:init}.
    \end{remark*}
    
    This completes the proof of lemma ~\ref{3d:lemma1}
\end{proof}

\begin{lem}\label{3d:lemma2}
    For arbitrary $T>0$, spatial dimension $\sd=3$, for some $\chi_{11}$ and $\chi_{21}$ satisfying $(\chi_{11}+\chi_{21})^2< 4 d_C^{(0)}d_N^{(0)}$,the following system: 
    \begin{flalign}
        \partial_t u_C - \nabla \cdot (d_C(x,t)\nabla u_C)= -\nabla\cdot (\chi_{11} B(u_C)&\nabla u_N + B(u_C) \nabla g_1)\notag\\
        & + \alpha_{11}u_N u_C + \mu_C u_C (1-\frac{u_C}{K_C})-\delta_C u_C,\label{lemma52:uc}\\
        \partial_t u_N - \nabla \cdot (d_N(x,t)\nabla u_N)= -\nabla\cdot (\chi_{21} B(u_N)&\nabla u_C +B(u_N)\nabla g_2)\notag\\
        &-\alpha_{21} u_N u_C + \mu_N u_N (1-\frac{u_N}{K_N})-\delta_N u_N, 
    \end{flalign}
    where $g_i:= \chi_{i2}u_A+\chi_{i3}u_I+\chi_{i4}u_V,\,\forall i\in\{1,2\}$, and $u_A, u_I, u_V$ are defined as lemma~\ref{lemma2} depending on $u_C$
    with boundary conditions:
    \begin{equation}
    \nabla_{\vec{\nu}} u_C (x,t) = \nabla_{\vec{\nu}} u_N (x,t)= 0,\,\forall (x,t)\in\Omega\times(0,T),
    \end{equation}
    and initial conditions:
     \begin{equation}
     (u_C(x,0), u_N(x,0)) = (u_{C_0}(x),u_{N_0}(x)),\, x\in\Omega\label{lemma52:init}
     \end{equation}
     satisfying $\eqref{hypo:init}$, has a weak solution in $V_2(Q_T)\cap L^3(Q_T)$, of the form of:
\begin{multline}
    \int_{0}^T \left<\partial_t u_C,\psi \right>_H dt + \int_{0}^T \int_{\Omega} d_C(x,t)\nabla u_C \cdot \nabla \psi dxdt + \frac{\mu_C}{K_C}\int_{0}^T\int_{\Omega} (u_C)^2 \psi dx dt\\
    = \int_{0}^T \int_{\Omega} \chi_{11} B(u_C) \nabla u_N \cdot \nabla \psi dxdt + \int_{0}^T \int_{\Omega}  B(u_C) \nabla g_1 \cdot \nabla \psi dxdt \\
    + \alpha_{11}\int_{0}^T \int_{\Omega} u_Nu_C\psi dxdt + (\mu_C-\delta_C)\int_{0}^T\int_{\Omega} u_C \psi dx dt\notag
\end{multline}
\begin{multline}\label{lemma52:weaksolution:un}
    \int_{0}^T \left<\partial_t u_N,\psi \right>_H dt + \int_{0}^T \int_{\Omega} d_N(x,t)\nabla u_N \cdot \nabla \psi dxdt + \frac{\mu_N}{K_N}\int_{0}^T\int_{\Omega} (u_N)^2 \psi dx dt\\
    = \int_{0}^T \int_{\Omega} \chi_{21} B(u_N) \nabla u_C \cdot \nabla \psi dxdt + \int_{0}^T \int_{\Omega}  B(u_N) \nabla g_2\cdot \nabla \psi dxdt \\
    -  \alpha_{21}\int_{0}^T \int_{\Omega} u_Nu_C\psi dxdt + (\mu_N-\delta_N)\int_{0}^T\int_{\Omega} u_N \psi dx dt
\end{multline}
for arbitrary test function, $\psi\in L^2(0,T;H^1(\Omega))\cap L^3(Q_T)$.  
\end{lem}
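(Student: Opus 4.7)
The plan is to obtain existence via Schaefer's fixed point theorem applied to a linearized version of the $(u_C,u_N)$ subsystem, where all pointwise nonlinearities (the truncation $B$, the logistic square, and the cross reaction $u_Nu_C$) are frozen while the cross--diffusion pairing between $\nabla u_C$ and $\nabla u_N$ is kept implicit. Specifically, given $(\tilde{u}_C,\tilde{u}_N)\in (L^3(Q_T))^2$, first invoke Lemma \ref{3d:lemma1} with $\hat{u}_C:=\tilde{u}_C$ to obtain the quadruple $(u_V,u_A,u_I,u_P)$ with the regularity stated there, which in turn supplies the Campanato--space control on the drift $g_1,g_2$ that is needed to close the estimates. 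Then define the mapping $M\colon (L^3(Q_T))^2\to (L^3(Q_T))^2$ by sending $(\tilde{u}_C,\tilde{u}_N)$ to the unique weak solution $(u_C,u_N)$ of the linear problem in which $B(u_C),B(u_N)$ and the logistic terms are replaced by $B(\tilde{u}_C)u_C,B(\tilde{u}_N)u_N$ and $\tfrac{\mu_C}{K_C}\tilde{u}_C u_C$ etc., so that the resulting linear system is solvable by standard Galerkin / dual--pairing arguments; the coupled $L^2$--energy estimate, combined with Cauchy's inequality and the hypothesis $(\chi_{11}+\chi_{21})^2<4d_C^{(0)}d_N^{(0)}$, provides coercivity exactly as in the computation leading to \eqref{ucun:v2}.

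For the fixed--point machinery, the set $\{u : u=\lambda M[u],\,\lambda\in[0,1]\}$ is uniformly bounded in $(V_2(Q_T)\cap L^3(Q_T))^2$ by Part 6 of Lemma \ref{ap:v2}, and the time derivatives $\partial_t u_C,\partial_t u_N$ are uniformly bounded in $L^2(0,T;H^*(\Omega))$ by Part 7; consequently an Aubin--Lions argument yields compactness of $M$ in $L^2(Q_T)$, which by interpolation with the uniform $L^3$ bound upgrades to strong convergence in $L^p(Q_T)$ for every $p<3$. Continuity of $M$ in the $L^3$ topology is verified by an energy--difference estimate for two inputs $\tilde{u}^{(1)},\tilde{u}^{(2)}$, exploiting the Lipschitz property $|B(a)-B(b)|\leqslant|a-b|$ of the truncation together with the uniform $V_2(Q_T)$ bounds on $\nabla u_j^{(2)}$ to dominate every linearized cross term. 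Applying Schaefer's theorem then produces a fixed point $(u_C,u_N)\in (V_2(Q_T)\cap L^3(Q_T))^2$.

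Passing to the limit in the weak formulation \eqref{lemma52:weaksolution:un} requires identifying the nonlinear products $B(u_C^{(k)})\nabla u_N^{(k)}$, $\tfrac{\mu_C}{K_C}(u_C^{(k)})^2$, and $\alpha_{11}u_N^{(k)}u_C^{(k)}$ in the limit; the key observation is that the test--function class $L^2(0,T;H^1(\Omega))\cap L^3(Q_T)$ is precisely what makes $\iint (u_C)^2\psi$ integrable via H\"older with exponents $(3/2,3)$, while $\iint u_Nu_C\psi$ is controlled by the embedding $V_2(Q_T)\hookrightarrow L^{10/3}(Q_T)$ valid in $d=3$. The main obstacle I anticipate is identifying the cross--diffusion limit $B(u_C^{(k)})\nabla u_N^{(k)}\rightharpoonup B(u_C)\nabla u_N$ and analogously for $B(u_N^{(k)})\nabla u_C^{(k)}$, which would be handled via the classical weak--strong convergence principle: boundedness of $B$, strong $L^p$--convergence of $u_C^{(k)}\to u_C$ for $p<3$ (hence pointwise a.e. along a subsequence), Lipschitz continuity of $B$, and dominated convergence give strong $L^q$--convergence of $B(u_C^{(k)})$ for any $q<\infty$, which couples with the weak $L^2$--convergence of $\nabla u_N^{(k)}$ to identify the product. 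No uniqueness is asserted, so the argument terminates upon verifying that the limit satisfies the integral identities, which is consistent with the weaker conclusion of Theorem \ref{exist_3d}.
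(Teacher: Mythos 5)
Your overall skeleton (freeze the nonlinearities, use \cref{3d:lemma1} to generate the drift $g_1,g_2$, run a fixed-point theorem in $(L^3(Q_T))^2$, then identify the nonlinear products by weak--strong convergence) is close in spirit to the paper, but the paper inserts an extra regularization layer that your one-shot Schaefer scheme drops, and the places where that layer does real work are exactly where your sketch has gaps. The paper replaces the reaction coupling by $u_N^{(\varepsilon)}\frac{u_C^{(\varepsilon)}}{1+\varepsilon u_C^{(\varepsilon)}}$, runs Leray--Schauder on an explicitly constructed convex set $\D$ of \emph{non-negative} functions with $\|\cdot\|_{L^3}\leqslant K_0$ on a short time interval, and only afterwards derives $\varepsilon$-independent bounds (using \eqref{hypo:reaction}) and passes $\varepsilon\to0$. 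Two of your steps do not close as stated. First, continuity of your map $M$ is asserted but not actually reachable from what you invoke: the drifts $\nabla g_1,\nabla g_2$ depend on the input $\tilde u_C$ through the entire subsystem of \cref{3d:lemma1}, so continuity of $M$ in $L^3$ requires a quantitative stability estimate of that subsystem with respect to $\hat u_C$ (the paper proves $\sum_{i\in\{V,A,I,P\}}\|\tilde u^{(\varepsilon)}_{i,n}-\tilde u^{(\varepsilon)}_i\|^2_{V_2(Q_T)}\leqslant C\|\tilde u^{(\varepsilon)}_{C,n}-\tilde u^{(\varepsilon)}_C\|^2_{L^2(Q_T)}$ as claim~\ref{continuity:claim}, see \eqref{continuous:v2:uc}); the Lipschitz property of $B$ and the $V_2$ bounds you cite say nothing about the terms $\nabla(\tilde g_{1,n}-\tilde g_1)$, which are the dominant ones. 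Second, Schaefer is being applied on $(L^3(Q_T))^2$, so you need compactness of $M$ in the $L^3(Q_T)$ topology; Aubin--Lions plus interpolation with a mere $L^3$ bound gives, as you say yourself, only strong convergence in $L^p$ for $p<3$, which does not make $M$ compact on the chosen ambient space. The missing ingredient is the embedding $V_2(Q_T)\hookrightarrow L^4(0,T;L^3(\Omega))$ of \cref{v2:embedding}, interpolated against the Aubin--Lions convergence in $L^2(0,T;L^3(\Omega))$ exactly as in \eqref{l3:conv}.

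There are two further points you should not gloss. The domain of your map carries no sign constraint, yet \cref{3d:lemma1} requires $\hat u_C\geqslant0$, and the a priori bound you invoke for the set $\{u=\lambda M[u]\}$ (Part 6 of \cref{ap:v2}) rests on the non-negativity of $u_C,u_N$ together with the cancellation between $+\alpha_{11}u_Nu_C^2$ and $-\alpha_{21}u_N^2u_C$ via \eqref{hypo:reaction}; the paper builds non-negativity into $\D$ and propagates it by the maximum principle, and you need to do the same (and check the cancellation survives your particular freezing pattern and the $\lambda$-scaling, which changes the initial data and drift by a factor $\lambda$). Finally, solvability of your linearized problems is not quite ``standard Galerkin'': the frozen zeroth-order coefficients $\tilde u_N,\tilde u_C$ lie only in $L^3(Q_T)$, so the energy estimate needs the Gagliardo--Nirenberg absorption $\int_\Omega\tilde u_Nu_C^2\,dx\leqslant\epsilon\|u_C\|^2_{H^1(\Omega)}+C\|\tilde u_N\|^2_{L^3(\Omega)}\|u_C\|^2_{L^2(\Omega)}$ and a Gr\"onwall argument with the integrable weight $\|\tilde u_N(\cdot,t)\|^2_{L^3(\Omega)}$; this is precisely the technical burden the paper's truncation $\frac{u_C}{1+\varepsilon u_C}$ was introduced to avoid, so if you drop the regularization you must supply these estimates explicitly. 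With those four items (subsystem stability, genuine $L^3$ compactness, non-negativity of the fixed-point set, and solvability plus uniform-in-$\lambda$ structure of the frozen problems) your direct route would likely be viable and somewhat shorter than the paper's two-layer construction; without them the proposal does not yet constitute a proof.
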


\begin{proof} We shall first investigate an approximate system. 
    
    $\bullet$ \textbf{An approximate system:}
    
    For arbitrary $\varepsilon\in (0,1)$, we have the following system for $(u_C^{(\varepsilon)},u_N^{(\varepsilon)})$:
\begin{flalign}
    \partial_t u^{(\varepsilon)}_C - \nabla \cdot (d_C(x,t)\nabla u^{(\varepsilon)}_C)=-\nabla\cdot (\chi_{11} B(u^{(\varepsilon)}_C)&\nabla u^{(\varepsilon)}_N +B(u^{(\varepsilon)}_C) \nabla g^{(\varepsilon)}_1)\label{approx:system:uc}\\ 
    &+ \alpha_{11}u^{(\varepsilon)}_N \frac{u^{(\varepsilon)}_C}{1+\varepsilon u^{(\varepsilon)}_C} + \mu_C u^{(\varepsilon)}_C -\frac{\mu_C}{K_C}(u^{(\varepsilon)}_C)^2-\delta_C u^{(\varepsilon)}_C, \notag\\
    \partial_t u^{(\varepsilon)}_N - \nabla \cdot (d_N(x,t)\nabla u^{(\varepsilon)}_N)=-\nabla\cdot (\chi_{21} B(u^{(\varepsilon)}_N)&\nabla u^{(\varepsilon)}_C + B(u^{(\varepsilon)}_N)\nabla g^{(\varepsilon)}_2)\label{approx:system:un}\\ 
    & -\alpha_{21}u^{(\varepsilon)}_N \frac{u^{(\varepsilon)}_C}{1+\varepsilon u^{(\varepsilon)}_C}  + \mu_N u^{(\varepsilon)}_N -\frac{\mu_N}{K_N}(u^{(\varepsilon)}_N)^2-\delta_N u^{(\varepsilon)}_N,\notag
\end{flalign}
where $g_i^{(\varepsilon)}:= \chi_{i2} u_A^{(\varepsilon)}+\chi_{i3} u_I^{(\varepsilon)}+\chi_{i4} u_V^{(\varepsilon)}$, $\forall i\in \{1,2\}$, and $u_A^{(\varepsilon)},u_I^{(\varepsilon)}, u_V^{(\varepsilon)}$ are the solution of lemma~\ref{3d:lemma1} with given $\hat{u}_C = u_C^{(\varepsilon)}$, with boundary conditions:
 \begin{equation}\label{eps:bdry}
 \nabla_{\vec{\nu}} u^{(\varepsilon)}_C (x,t) = \nabla_{\vec{\nu}} u^{(\varepsilon)}_N (x,t)= 0,\,\forall (x,t)\in\Omega\times(0,T),
 \end{equation}
and initial conditions:
\begin{equation}\label{eps:init}
    (u^{(\varepsilon)}_C(x,0), u^{(\varepsilon)}_N(x,0)) = (u_{C_0}(x),u_{N_0}(x)),\, \forall x\in\Omega.
\end{equation}

$\bullet$ \textbf{$\varepsilon$-independent Apriori estimates:}

We establish some apriori estimate on the approximate system which is independent of $\varepsilon$ by the following claims:
\begin{claim}
    \begin{equation}
        \appuc\geqslant 0,\,\text{and}\,\appun \geqslant 0,\,\forall (x,t)\in Q_T.\label{app:estimate:pos}
    \end{equation}
\end{claim}
\begin{claimproof}
    Similar to lemma~\ref{ap:pos}, one can obtain this non-negativity by the maximum principle.
\end{claimproof}

\begin{claim}
    \begin{align}
    \|u_C^{(\varepsilon)}\|_{L^3(Q_T)}+\|u_C^{(\varepsilon)}\|_{V_2(Q_T)}+\|\partial_t u_C^{(\varepsilon)}\|_{L^2(0,T;H^*(\Omega))}\leqslant C \label{app:estimate:uc} \\
    \|u_N^{(\varepsilon)}\|_{L^3(Q_T)}+\|u_N^{(\varepsilon)}\|_{V_2(Q_T)}+\|\partial_t u_N^{(\varepsilon)}\|_{L^2(0,T;H^*(\Omega))}\leqslant C  \label{app:estimate:un}
    \end{align}
    where $C$ is a constant only depending on known data and being independent of $\varepsilon$.
\end{claim}
\begin{claimproof}
    Multiplying equation~\eqref{approx:system:uc} and ~\eqref{approx:system:un} by $\appuc$ and $\appun$, respectively, and integrating over the spatial domain $\Omega$, we obtain the following inequalities:
    \begin{flalign}
        &\frac{1}{2}\frac{d}{dt} \int_{\Omega}(\appuc)^2 dx + d^{(0)}_C\int_{\Omega}|\nabla u^{(\varepsilon)}_C|^2 dx + \frac{\mu_C}{K_C}\int_{\Omega} (u^{(\varepsilon)}_C)^3dx&&\notag\\
        \leqslant& \int_{\Omega} \chi_{11} B(u^{(\varepsilon)}_C)\nabla u^{(\varepsilon)}_N\cdot \nabla \appuc dx + B(u^{(\varepsilon)}_C)\nabla g^{(\varepsilon)}_2 \cdot \nabla \appun dx \notag\\
        &+ \underbrace{\alpha_{11}\int_{\Omega} (u^{(\varepsilon)}_C)^2 \frac{u^{(\varepsilon)}_N}{1+\varepsilon u^{(\varepsilon)}_C} dx}_{(\ref{appkey:epsilon}.1)}  + (\mu_C-\delta_C)\int_{\Omega}(u^{(\varepsilon)}_C)^2 dx,\label{appkey:epsilon}\\
    &\frac{1}{2}\frac{d}{dt} \int_{\Omega}(\appun)^2 dx + d^{(0)}_N\int_{\Omega}|\nabla u^{(\varepsilon)}_N|^2 dx +\alpha_{21} \int_{\Omega} (u^{(\varepsilon)}_N)^2 \frac{u^{(\varepsilon)}_C}{1+\varepsilon u^{(\varepsilon)}_C} dx + \frac{\mu_N}{K_N}\int_{\Omega} (u^{(\varepsilon)}_N)^3dx&&\notag\\
    \leqslant& \int_{\Omega} \chi_{21} B(u^{(\varepsilon)}_N)\nabla u^{(\varepsilon)}_C\cdot \nabla \appun dx + B(u^{(\varepsilon)}_N)\nabla g^{(\varepsilon)}_2 \cdot \nabla \appun dx + (\mu_N-\delta_N)\int_{\Omega}(u^{(\varepsilon)}_N)^2 dx,\label{appkey:epsilon2}
    \end{flalign}

    Notice that
    \begin{flalign}
        (\ref{appkey:epsilon}.1)&\leqslant \int_{\Omega}\alpha_{11}(\appuc)^{\frac{3}{2}}\frac{(\appuc)^{\frac{1}{2}}\appun}{\sqrt{1+\varepsilon \appuc}} dx &&\notag\\
        &\leqslant \alpha_{21} \int_{\Omega}\frac{\appuc}{1+\varepsilon \appuc} (\appun)^2 dx + \frac{\alpha_{11}^2}{4\alpha_{21}}\int_{\Omega}(\appuc)^3 dx\notag
    \end{flalign}
    Since by~\eqref{hypo:reaction}, we have $\alpha_{11}^2<\frac{4\alpha_{21} \mu_C}{K_C}$, then there exists a constant $\theta>0$ such that:
    \[\alpha_{11}^2 \leqslant (\frac{\theta -1}{\theta})\frac{4\alpha_{21}\mu_C}{K_C},\]
    then we have 
    \[(\ref{appkey:epsilon}.1)\leqslant\alpha_{21} \int_{\Omega}\frac{\appuc}{1+\varepsilon \appuc} (\appun)^2 dx + (\frac{\theta -1}{\theta})\frac{\mu_C}{K_C}\int_{\Omega}(\appuc)^3 dx.\]
    Hence, summing up~\eqref{appkey:epsilon} and~\eqref{appkey:epsilon2} with~\eqref{h2}, we obtain:
    \begin{equation}
        \|u_C^{(\varepsilon)}\|_{L^3(Q_T)}+\|u_C^{(\varepsilon)}\|_{V_2(Q_T)}+ \|u_N^{(\varepsilon)}\|_{L^3(Q_T)}+\|u_N^{(\varepsilon)}\|_{V_2(Q_T)}\leqslant C,
    \end{equation}
    where the process will be the same as lemma~\ref{ap:v2}.

    Next, we shall prove $\|\partial_t\appuc\|_{L^2(0,T;H^*(\Omega))}+\|\partial_t\appun\|_{L^2(0,T;H^*(\Omega))}\leqslant C$. 
    
    Given $v\in \{v: \|v\|_{H^1(\Omega)}\leqslant 1\}$ and denoted the dual pairing between $H^*(\Omega)$ and $H^1(\Omega)$ by $\left<u,v \right>_H$, then we have:
    \begin{flalign}
        &\left<\partial_t\appun,v \right>_H \notag\\
        =& \left<\nabla\cdot(d_N(x,t)\nabla \appun),v \right>_H - \left<\chi_{21}\nabla \cdot(B(\appun)\nabla \appuc,v \right>_H - \left<\nabla \cdot(B(\appun)\nabla g_2^{(\varepsilon)}),v \right>_H\notag\\
        & -\left<\alpha_{21}\appun\frac{\appuc}{1+\varepsilon\appuc},v \right>_H +(\mu_N-\delta_N)\left<\appun,v \right>_H +\frac{\mu_N}{K_N}\left<-(\appun)^2,v \right>_H\notag&&\\
        \leqslant& d_N^{(1)}\|\appun\|_{H^1(\Omega)}+ \chi_{21} \|\appuc\|_{H^1(\Omega)}+\|g_2^{(\varepsilon)}\|_{H^1(\Omega)} + \alpha_{21}\|\appuc\|_{L^{3}(\Omega)}\|\appun\|_{L^{3}(\Omega)}\|v\|_{L^{3}(\Omega)} \\
        &+ (\mu_N-\delta_N) \|\appun\|_{L^2(\Omega)} + \frac{\mu_N}{K_N}\|\appun\|^2_{L^3(\Omega)}\|v\|_{L^{3}(\Omega)}\notag\\
        \leqslant& C (\|\appun\|_{H^1(\Omega)}+\|\appuc\|_{H^1(\Omega)}+\|\appuc\|^2_{L^3(\Omega)}+\|\appun\|^2_{L^3(\Omega)})
    \end{flalign}
    where the second to last inequality is due to H\"older inequality and the last one is due to an embedding $H^1(\Omega)\hookrightarrow L^3(\Omega)$ when $\sd =3$. Hence, by the same process in part 7 of~\cref{ap:v2}, integrating over time, and an embedding $V_2(Q_T)\hookrightarrow L^4(0,T;L^3(\Omega))$ for dimension $\sd = 3$ as it is shown in~\cref{v2:embedding}, we obtain the following two estimates :
    \begin{equation}\label{dual:help}
        \|\partial_t \appun\|_{L^2(0,T;H^*(\Omega))} \leqslant C,
    \end{equation}
   Similarly, one can obtain the following estimate:
    \begin{equation}\label{dual:uc}
        \|\partial_t \appuc\|_{L^2(0,T;H^*(\Omega))}\leqslant C,
    \end{equation}
    where $C$ in estimate~\eqref{dual:help} and~\eqref{dual:uc} are constants independent of $\varepsilon$. 
\end{claimproof}

$\bullet$ \textbf{Existence of the approximate system:}

We shall use Leray-Schauder fixed point theorem on the following mapping $M_5$:

Given $(\tilde{u}^{(\varepsilon)}_C,\tilde{u}^{(\varepsilon)}_N) \in \D:=\{(u,v):\|u\|_{L^3(Q_T)}+\|v\|_{L^3(Q_T)}\leqslant K_0\,\text{and}\,u,v\geqslant 0\}$, where $K_0$ will be specified by~\eqref{K:cond:start} -~\eqref{K:cond:end} later, and denote $\tilde{g}^{(\varepsilon)}_i:= \chi_{i2}\tilde{u}^{(\varepsilon)}_A+\chi_{i3}\tilde{u}^{(\varepsilon)}_I+\chi_{i4}\tilde{u}^{(\varepsilon)}_V$ as the solution of lemma~\ref{3d:lemma1} with given $\hat{u}_C = \tilde{u}^{(\varepsilon)}_C$, and using the same notation for the solution, $(\appuc,\appun)$, they satisfy the following system:
\begin{flalign}
    \partial_t u^{(\varepsilon)}_C - \nabla \cdot (d_C(x,t)\nabla u^{(\varepsilon)}_C)=-\nabla\cdot (\chi_{11} B(&\tilde{u}^{(\varepsilon)}_C)\nabla u^{(\varepsilon)}_N +B(u^{(\varepsilon)}_C) \nabla \tilde{g}^{(\varepsilon)}_1) \label{Schaefer:equ1}\\ 
    & + \alpha_{11}\tilde{u}^{(\varepsilon)}_N \frac{u^{(\varepsilon)}_C}{1+\varepsilon u^{(\varepsilon)}_C} + \mu_C u^{(\varepsilon)}_C -\frac{\mu_C}{K_C}(u^{(\varepsilon)}_C)^2-\delta_C u^{(\varepsilon)}_C, \notag\\
    \partial_t u^{(\varepsilon)}_N - \nabla \cdot (d_N(x,t)\nabla u^{(\varepsilon)}_N)=-\nabla\cdot (\chi_{21} B(&\tilde{u}^{(\varepsilon)}_N)\nabla u^{(\varepsilon)}_C + B(u^{(\varepsilon)}_N)\nabla \tilde{g}^{(\varepsilon)}_2)\label{Schaefer:equ2}\\ 
    & -\alpha_{21}\tilde{u}^{(\varepsilon)}_N \frac{u^{(\varepsilon)}_C}{1+\varepsilon u^{(\varepsilon)}_C}  + \mu_N u^{(\varepsilon)}_N -\frac{\mu_N}{K_N}(u^{(\varepsilon)}_N)^2-\delta_N u^{(\varepsilon)}_N,\notag 
\end{flalign}
with boundary condition~\eqref{eps:bdry} and initial condition~\eqref{eps:init}. By Galerkin method and energy estimate, this system has a unique solution, which implies the mapping $M_5$ is a well-defined mapping, i.e. $M_5[(\tilde{u}^{(\varepsilon)}_C,\tilde{u}^{(\varepsilon)}_N)]= (u^{(\varepsilon)}_C,u^{(\varepsilon)}_N)$. We recall that to apply Leray-Schauder fixed point theorem, one need to show the mapping $M_5$ maps from a nonempty, convex, bounded, closed subset $\D$ in a Banach space to a subset of $\D$ and the mapping is continuous and compact. It is obvious that $\D$ is nonempty, convex, bounded and closed, so we shall show that $\|M_5[u]\|_{(L^3(Q_T))^2}\leqslant K_0,\,\text{and}\,M_5[u]\geqslant 0,\,\forall\,u\in\D$ then compactness and continuity of the mapping:

$\circ$ \textbf{$M_5[\D]\subset\D$:}

Apply $L^2$ energy estimate on equation~\eqref{Schaefer:equ1} and~\eqref{Schaefer:equ2},  we have the following estimate:
\begin{align}
   &\frac{d}{dt} \int_{\Omega}(u^{(\varepsilon)}_C)^2+(u^{(\varepsilon)}_N)^2dx + \int_{\Omega}\frac{d^{(0)}_C}{\gamma}|\nabla u^{(\varepsilon)}_C|^2+\frac{d^{(0)}_N}{\gamma}|\nabla u^{(\varepsilon)}_N|^2dx +\frac{2\mu_C}{K_C}\int_{\Omega}(u^{(\varepsilon)}_C)^3 dx +\frac{2\mu_N}{K_N}\int_{\Omega}(u^{(\varepsilon)}_N)^3 dx \notag\\
   \leqslant& \int_{\Omega} \frac{3\gamma}{d_C^{(0)}}(|\chi_{12}\nabla \tilde{u}^{(\varepsilon)}_A|^2+|\chi_{13}\nabla\tilde{u}^{(\varepsilon)}_I|^2+|\chi_{14}\nabla\tilde{u}^{(\varepsilon)}_V|^2) +\frac{3\gamma}{ d_N^{(0)}} (|\chi_{22}\nabla \tilde{u}^{(\varepsilon)}_A|^2+|\chi_{23}\nabla\tilde{u}^{(\varepsilon)}_I|^2+|\chi_{24}\nabla\tilde{u}^{(\varepsilon)}_V|^2) dx \notag\\
   &+ \max\{2(\mu_C -\delta_C),2(\mu_N -\delta_N),\frac{\alpha_{11}^2}{\varepsilon\kappa} ,\frac{\alpha_{21}^2}{\varepsilon\kappa}\}\int_{\Omega}(u^{(\varepsilon)}_C)^2+(u^{(\varepsilon)}_N)^2dx + \kappa \int_{\Omega} (\tilde{u}^{(\varepsilon)}_N)^2 dx,\label{bounded:equ}
\end{align}
where $\gamma$ is defined the same as it in~\eqref{const:gamma} due to $(\chi_{11}+\chi_{21})^2< 4 d_C^{(0)}d_N^{(0)}$ and $\kappa<< 1$ is a small enough constant, which will be specified by~\eqref{cond:kappa} later. And for several estimates on $\int_{\Omega}|\nabla\tilde{g}^{(\varepsilon)}_1|^2dx$ and $\int_{\Omega}|\nabla\tilde{g}^{(\varepsilon)}_2|^2dx$, we shall show the process on equation~\eqref{special:term:cts:ui} and~\eqref{special:term:cts:up} as an example, and the rest estimates will be the same. By the virtue of the unique, bounded solution of system in~\cref{3d:lemma1} whenever given a $\tilde{u}^{(\varepsilon)}_C\in\D$, one can have uniform bounds, denoted by $(\tilde{K}_V,\tilde{K}_A,\tilde{K}_I,\tilde{K}_P)$, for all corresponding unique solutions such that $(\tilde{u}^{(\varepsilon)}_V,\tilde{u}^{(\varepsilon)}_A,\tilde{u}^{(\varepsilon)}_I,\tilde{u}^{(\varepsilon)}_P)\leqslant (\tilde{K}_V,\tilde{K}_A,\tilde{K}_I,\tilde{K}_P)$, uniform in the sense of $\tilde{u}^{(\varepsilon)}_C$. Here, we use $(\tilde{u}_V,\tilde{u}_A,\tilde{u}_I,\tilde{u}_P)$ as the notation of the solution of system in~\cref{3d:lemma1} when given $\hat{u}_C = \tilde{u}_C^{(\varepsilon)}$ and obtain estimates by applying $L^2$ energy estimate as following:
\begin{align}
    &\frac{d}{dt}\int_{\Omega}  \tilde{u}_I^2+\tilde{u}_P^2 dx +2 d_I^{(0)}\int_{\Omega}|\nabla \tilde{u}_I|^2dx+ 2 d_P^{(0)}\int_{\Omega}|\nabla \tilde{u}_P|^2dx\notag\\
     \leqslant &\int_{\Omega} 2\mu_I \tilde{u}_I \tilde{u}_P dx+\int_{\Omega} 2\alpha_{61} \tilde{u}_C^{(\varepsilon)} \tilde{u}_A \tilde{u}_P + 2\alpha_{62}\tilde{u}_V\tilde{u}_I\tilde{u}_P dx\notag\\
     \leqslant& \kappa \int_{\Omega} (\tilde{u}_C^{(\varepsilon)} )^2 dx + (\frac{\alpha_{61}^2 \tilde{K}_A^2}{\kappa}+\alpha_{62}\tilde{K}_V + \mu_I)\int_{\Omega} \tilde{u}_I^2 +\tilde{u}_P^2 dx.
\end{align}
Similarly,
\begin{align}
    &\frac{d}{dt}\int_{\Omega}  \tilde{u}_A^2dx +2 d_A^{(0)}\int_{\Omega}|\nabla \tilde{u}_A|^2dxdx\leqslant  \kappa\int_{\Omega} (\tilde{u}_C^{(\varepsilon)} )^2 dx + \frac{\mu_A^2}{\kappa}\int_{\Omega}  \tilde{u}_A^2dx,\\
    &\frac{d}{dt}|\nabla \tilde{u}_V|^2 \leqslant C_V |\nabla \tilde{u}_V|^2 + \alpha_{31}\tilde{K}_V|\nabla \tilde{u}_P|^2 + (\alpha_{32}\tilde{K}_V+\alpha_{33}\tilde{K}_A)|\nabla \tilde{u}_I|^2 + \alpha_{33}\tilde{K}_I|\nabla \tilde{u}_A|^2,
\end{align}
where $C_V:=(2\alpha_{31}\tilde{K}_P + \alpha_{31}\tilde{K}_V + 2\alpha_{32}\tilde{K}_I+\alpha_{32}\tilde{K}_V+\alpha_{33}\tilde{K}_I+\alpha_{33}\tilde{K}_A+2\mu_V)$. Then, by Gr\"onwall's inequality,
\begin{multline}
    \int_{Q_T} |\nabla \tilde{u}_I|^2 +|\nabla \tilde{u}_P|^2 dxdt\leqslant \frac{\kappa e^{(\frac{\alpha_{61}^2 \tilde{K}_A^2}{\kappa}+\alpha_{62}\tilde{K}_V + \mu_I) T}}{\min\{2d_I^{(0)},2d_P^{(0)}\}}\int_{Q_T}(\tilde{u}_C^{(\varepsilon)} )^2 dxdt\\
     + \frac{e^{(\frac{\alpha_{61}^2 \tilde{K}_A^2}{\kappa}+\alpha_{62}\tilde{K}_V + \mu_I) T}}{\min\{2d_I^{(0)},2d_P^{(0)}\}} (\|\tilde{u}_{I_0}\|^2_{L^2(\Omega)}+\|\tilde{u}_{P_0}\|^2_{L^2(\Omega)}) 
\end{multline}
\begin{flalign}
    \int_{Q_T}|\nabla \tilde{u}_A|^2 dxdt&\leqslant \frac{1}{2d_A^{(0)}}\kappa e^{(\frac{\mu_A^2}{\kappa})T}\int_{Q_T}(\tilde{u}_C^{(\varepsilon)} )^2 dxdt + \frac{e^{(\frac{\mu_A^2}{\kappa})T}}{2d_A^{(0)}}\|\tilde{u}_{A_0}\|_{L^2(\Omega)}^2\\
   \sup_{0<t<T} \int_{\Omega} |\nabla \tilde{u}_V|^2 dx  &\leqslant e^{C_V T}\|\nabla \tilde{u}_{V_0}\|_{L^2(\Omega)}^2 + C_R e^{C_V T}\int_{Q_T}|\nabla \tilde{u}_P|^2 + |\nabla \tilde{u}_I|^2 + |\nabla \tilde{u}_A|^2 dxdt
\end{flalign}
where $C_R:=\max\{\alpha_{31}\tilde{K}_V,\alpha_{32}\tilde{K}_V+\alpha_{33}\tilde{K}_A,\alpha_{33}\tilde{K}_I\} $. Since we shall only discuss the scales of $\kappa$ and $T$, for constants independent of $\kappa$ and $T$ will be denoted as generic $C$. Applying Gr\"onwall's inequality on~\eqref{bounded:equ}, we have the following estimate:

\begin{align}
    \int_{Q_T} (u^{(\varepsilon)}_C)^3dxdt + \int_{Q_T} (u^{(\varepsilon)}_N)^3dxdt &\leqslant (1+T) (C e^{(\frac{C}{\kappa}+C)T})\kappa \int_{Q_T}(\tilde{u}_C)^2 +(\tilde{u}_N)^2 dxdt\notag\\
    &+Ce^{(\frac{C}{\kappa})T}(\|u_{C_0}\|^2_{L^2(\Omega)} +\|u_{N_0}\|^2_{L^2(\Omega)})+ Te^{CT}\|\nabla u_{V_0}\|^2_{L^2(\Omega)}\notag\\ 
    &+ (1+T)C\kappa e^{(\frac{C}{\kappa}+C)T} (\|u_{A_0}\|^2_{L^2(\Omega)} +\|u_{I_0}\|^2_{L^2(\Omega)} +\|u_{P_0}\|^2_{L^2(\Omega)} )\notag\\
    &\leqslant (1+T) (C_c e^{(\frac{C_c}{\kappa}+C_c)T})\kappa (\|\tilde{u}_C\|_{L^3(Q_T)}^2+\|\tilde{u}_N\|_{L^3(Q_T)}^2) + C^{\sharp}(\kappa,T)
\end{align} 
where $C_c$ is only dependent on known data and independent of $\kappa$ and $T$ and $$C^{\sharp}(\kappa,T)\rightarrow C_c(\|u_{C_0}\|^2_{L^2(\Omega)} +\|u_{N_0}\|^2_{L^2(\Omega)}) + C_c\kappa (\|u_{A_0}\|^2_{L^2(\Omega)} +\|u_{I_0}\|^2_{L^2(\Omega)} +\|u_{P_0}\|^2_{L^2(\Omega)} )$$ as $T\rightarrow 0$. Notice that for each $\kappa>0$, $(1+T) (C e^{(\frac{C}{\kappa}+C)T})$ is a monotonic non-decreasing continuous function of $T$, so does $C^{\sharp}(\kappa,T)$. Hence, we shall choose $\kappa << 1$ satisfying:
\begin{equation}\label{cond:kappa}
2 C_c \kappa K_0^2 + C_c(\|u_{C_0}\|^2_{L^2(\Omega)} +\|u_{N_0}\|^2_{L^2(\Omega)}) + C_c\kappa (\|u_{A_0}\|^2_{L^2(\Omega)} +\|u_{I_0}\|^2_{L^2(\Omega)} +\|u_{P_0}\|^2_{L^2(\Omega)} )< \frac{K_0^3}{8}
\end{equation}
with $K_0$ predefined such that all three following conditions are satisfied:
\begin{align}
    \frac{K_0^3}{24}&>C_c(\|u_{C_0}\|^2_{L^2(\Omega)} +\|u_{N_0}\|^2_{L^2(\Omega)});\label{K:cond:start}\\
    \frac{K_0^3}{24}&>C_c (\|u_{A_0}\|^2_{L^2(\Omega)} +\|u_{I_0}\|^2_{L^2(\Omega)} +\|u_{P_0}\|^2_{L^2(\Omega)})\\
    \frac{K_0^3}{24}&> 2 C_c K_0^2.\label{K:cond:end}
\end{align}
Therefore, by continuity, there exists a $T = \tau^*$ such that $\|u^{(\varepsilon)}_C\|^3_{L^3(Q_{\tau^*})}+\|u^{(\varepsilon)}_N\|^3_{L^3(Q_{\tau^*})} < \frac{K_0^3}{8}$ which implies $\|u^{(\varepsilon)}_C\|_{L^3(Q_{\tau^*})}+\|u^{(\varepsilon)}_N\|_{L^3(Q_{\tau^*})} < K_0$.

For non-negativity part, as we derived~\eqref{app:estimate:pos}, one can use the maximum principle to conclude the non-negativity result.

$\circ$ \textbf{Compactness of $M_5$:}
Since we already know the apriori estimate from~\eqref{app:estimate:pos} -~\eqref{app:estimate:un}, with the same process, one can obtain the following estimates:
\begin{align}
    \|u^{(\varepsilon)}_C\|_{V_2(Q_T)}+ \|u^{(\varepsilon)}_C\|_{L^3(Q_T)} +\|\partial_t u_C\|_{L^2(0,T;H^*(\Omega))}\leqslant C(\varepsilon) \label{app:conv:uc}\\
    \|u^{(\varepsilon)}_N\|_{V_2(Q_T)}+ \|u^{(\varepsilon)}_N\|_{L^3(Q_T)} +\|\partial_t u_N\|_{L^2(0,T;H^*(\Omega))}\leqslant C(\varepsilon) \label{app:conv:un}
\end{align}
where $C(\varepsilon)$ are some constants depending on known data and $\varepsilon$. With these estimates, the Aubin-Lion lemma yields that for any bounded sequence $(\tilde{u}_{C,n}^{(\varepsilon)},\tilde{u}_{N,n}^{(\varepsilon)})_{n\in\N}\subset (L^3(Q_T))^2$, there exists a convergent subsequence $(\tilde{u}_{C,n_j}^{(\varepsilon)},\tilde{u}_{N,n_j}^{(\varepsilon)})_{j\in\N}$ converging to $(v_C,v_N)$ in the $(L^2(0,T;L^3(\Omega)))^2$ sense. 
\begin{claim}
    The subsequence $(\tilde{u}_{C,n_j}^{(\varepsilon)},\tilde{u}_{N,n_j}^{(\varepsilon)})_{j\in\N}$ is convergent in $L^3(Q_T)$ sense.
\end{claim}
\begin{claimproof}
    We recall an important embedding to prove this claim:
    \[L^2(0,T;H^1(\Omega))\cap L^{\infty}(0,T;L^2(\Omega))\hookrightarrow L^4(0,T;L^3(\Omega)),\,\text{for dimension}\,d = 3.\]

    Since $(\tilde{u}_{C,n_j}^{(\varepsilon)},\tilde{u}_{N,n_j}^{(\varepsilon)})_{j\in\N}$ in the $(L^2(0,T;L^3(\Omega)))^2$ sense, we denote $\bar{V}_{i,j} = \tilde{u}_{i,n_j}^{(\varepsilon)} - v_i$ for $i\in\{C,N\}$, and by H\"older inequality, we have:
    \begin{equation}
        \|\bar{V}_{i,j}\|^3_{L^3(Q_T)}\leqslant \|\|\bar{V}_{i,j}\|_{L^3(\Omega)}\|_{L^2(0,T)}\|\|\bar{V}_{i,j}\|^2_{L^3(\Omega)}\|_{L^2(0,T)} = \|\bar{V}_{i,j}\|_{L^2(0,T;L^3(\Omega))} \|\bar{V}_{i,j}\|^2_{L^4(0,T;L^3(\Omega))}\label{l3:conv}
    \end{equation}
    
    By \eqref{l3:conv}, the subsequence $(\tilde{u}_{C,n_j}^{(\varepsilon)},\tilde{u}_{N,n_j}^{(\varepsilon)})_{j\in\N}$  is convergent in $L^3(Q_T)$ sense.

\end{claimproof}

$\circ$ \textbf{Continuity of $M_5$:}

Given two sequences that $\tilde{u}_{C,n}^{(\varepsilon)}\rightarrow \tilde{u}_{C}^{(\varepsilon)}$ and $\tilde{u}_{N,n}^{(\varepsilon)}\rightarrow \tilde{u}_{N}^{(\varepsilon)}$ are convergent in $L^3(Q_T)$ sense. We first claim that:
\begin{claim}
    $(\nabla \tilde{g}^{(\varepsilon)}_{i,n})_{n\in\N}$ converges to $\nabla \tilde{g}^{(\varepsilon)}_{i}$ in $(L^2(Q_T))^3$ sense, for $i\in\{1,2\}$, where $\tilde{g}^{(\varepsilon)}_{i,n}: = \chi_{i2} \tilde{u}^{(\varepsilon)}_{A,n}+\chi_{i3} \tilde{u}^{(\varepsilon)}_{I,n}+\chi_{i4} \tilde{u}^{(\varepsilon)}_{V,n}$ and $\tilde{u}^{(\varepsilon)}_{A,n},\tilde{u}^{(\varepsilon)}_{I,n},\tilde{u}^{(\varepsilon)}_{V,n}$ are the solution of lemma~\ref{3d:lemma1} with given $\hat{u}_C = \tilde{u}^{(\varepsilon)}_{C,n}$, and $\tilde{g}^{(\varepsilon)}_{i}: = \chi_{i2} \tilde{u}^{(\varepsilon)}_{A}+\chi_{i3} \tilde{u}^{(\varepsilon)}_{I}+\chi_{i4} \tilde{u}^{(\varepsilon)}_{V}$ and $\tilde{u}^{(\varepsilon)}_{A},\tilde{u}^{(\varepsilon)}_{I},\tilde{u}^{(\varepsilon)}_{V}$ are the solution of lemma~\ref{3d:lemma1} with given $\hat{u}_C = \tilde{u}^{(\varepsilon)}_{C}$. \label{continuity:claim}
\end{claim}
\begin{claimproof}
    Since $\tilde{u}_{C,n}^{(\varepsilon)}\rightarrow \tilde{u}_{C}^{(\varepsilon)}$ convergent in $L^3(Q_T)$ sense, this implies it is convergent in $L^2(Q_T)$ sense and $\{\tilde{u}_{C,n}^{(\varepsilon)}\}_{n\in\N}\cup \{\tilde{u}^{(\varepsilon)}_{C}\}$ is uniformly $L^2(Q_T)$ bounded. Hence, we have:
    \[\sum_{i\in \{V,A,I,P\}}\|u^{(\varepsilon)}_{i,n}\|_{V_2(Q_T)} + \|u^{(\varepsilon)}_{i,n}\|_{L^{\infty}(Q_T)}\leqslant C,\]
    where $C$ is independent of $n$.

    We will prove that for any $n\in \N$, 
    \begin{equation}\label{continuous:v2:uc}
        \sum_{i\in \{V,A,I,P\}} \|\tilde{u}^{(\varepsilon)}_{i,n} - \tilde{u}^{(\varepsilon)}_i\|^2_{V_2(Q_T)} \leqslant C \|\tilde{u}_{C,n}^{(\varepsilon)}-\tilde{u}_{C}^{(\varepsilon)}\|^2_{L^2(Q_T)}.
    \end{equation}
    This estimate holds due to a simple $L^2(Q_T)$ energy estimate. We shall only show two estimates on $-\alpha_{42} u_A \hat{u}_C$ in equation~\eqref{special:term:cts:ua} and $ +\alpha_{61}\hat{u}_C u_A$ in equation~\eqref{special:term:cts:up} as a representative and the rest terms will be similar:
    \begin{align}
        &-\alpha_{42} \int_{\Omega} (\tilde{u}^{(\varepsilon)}_{A,n} \tilde{u}^{(\varepsilon)}_{C,n} - \tilde{u}^{(\varepsilon)}_A \tilde{u}^{(\varepsilon)}_{C}) (\tilde{u}^{(\varepsilon)}_{A,n}- \tilde{u}^{(\varepsilon)}_A)dx\notag \\
        =& -\alpha_{42} \int_{\Omega} (\tilde{u}^{(\varepsilon)}_{A,n} - \tilde{u}^{(\varepsilon)}_A)^2 \tilde{u}^{(\varepsilon)}_{C}+\tilde{u}^{(\varepsilon)}_{A,n}( \tilde{u}^{(\varepsilon)}_{C,n} -\tilde{u}^{(\varepsilon)}_{C}) (\tilde{u}^{(\varepsilon)}_{A,n} - \tilde{u}^{(\varepsilon)}_A)dx \notag\\
        \leqslant& C(\int_{\Omega}( \tilde{u}^{(\varepsilon)}_{C,n} -\tilde{u}^{(\varepsilon)}_{C})^2dx + \int_{\Omega}(\tilde{u}^{(\varepsilon)}_{A,n} - \tilde{u}^{(\varepsilon)}_A)^2dx) \notag\\
        =& C (\|\tilde{u}^{(\varepsilon)}_{C,n} -\tilde{u}^{(\varepsilon)}_{C}\|_{L^2(\Omega)}^2+\|\tilde{u}^{(\varepsilon)}_{A,n} -\tilde{u}^{(\varepsilon)}_{A}\|_{L^2(\Omega)}^2),
    \end{align}
    where the last inequality is due to nonnegativity of $\tilde{u}^{(\varepsilon)}_C$, the uniform $L^{\infty}(Q_T)$ bound of $\tilde{u}^{(\varepsilon)}_{A,n}$ and Cauchy inequality;
    \begin{flalign}
        &\alpha_{61}\int_{\Omega} ( \tilde{u}^{(\varepsilon)}_{C,n}\tilde{u}^{(\varepsilon)}_{A,n} -\tilde{u}^{(\varepsilon)}_{C} \tilde{u}_A ) (\tilde{u}^{(\varepsilon)}_{P,n}- \tilde{u}_P)dx\notag &&\\
        =&\alpha_{61}\int_{\Omega}\tilde{u}^{(\varepsilon)}_A ( \tilde{u}^{(\varepsilon)}_{C,n} -\tilde{u}^{(\varepsilon)}_{C} ) (\tilde{u}^{(\varepsilon)}_{P,n}- \tilde{u}^{(\varepsilon)}_P)dx +\alpha_{61}\int_{\Omega} \tilde{u}^{(\varepsilon)}_{C,n}(\tilde{u}^{(\varepsilon)}_{A,n} - \tilde{u}_A ) (\tilde{u}^{(\varepsilon)}_{P,n}- \tilde{u}^{(\varepsilon)}_P)dx\notag\\
        \leqslant& C(\int_{\Omega} (\tilde{u}^{(\varepsilon)}_{C,n} -\tilde{u}^{(\varepsilon)}_{C} )^2 dx+\int_{\Omega} (\tilde{u}^{(\varepsilon)}_{P,n}- \tilde{u}^{(\varepsilon)}_P)^2dx) + C\|\tilde{u}^{(\varepsilon)}_{C,n}\|_{L^2(\Omega)}\|\tilde{u}^{(\varepsilon)}_{A,n} - \tilde{u}^{(\varepsilon)}_A \|_{L^4(\Omega)}\|\tilde{u}^{(\varepsilon)}_{P,n} - \tilde{u}^{(\varepsilon)}_P \|_{L^4(\Omega)} \notag\\
        \leqslant& C\|\tilde{u}^{(\varepsilon)}_{C,n} -\tilde{u}^{(\varepsilon)}_{C}\|^2_{L^2(\Omega)}+C\|\tilde{u}^{(\varepsilon)}_{C,n}\|^2_{L^2(\Omega)}\|\tilde{u}^{(\varepsilon)}_{A,n}- \tilde{u}^{(\varepsilon)}_A \|^2_{L^2(\Omega)}\notag\\
        &+ C\|\tilde{u}^{(\varepsilon)}_{P,n} - \tilde{u}^{(\varepsilon)}_P \|_{L^2(\Omega)}^2 + \frac{d_P^{(0)}}{2}\|\nabla(\tilde{u}^{(\varepsilon)}_{P,n} - \tilde{u}^{(\varepsilon)}_P) \|_{L^2(\Omega)}^2
    \end{flalign}
    where the first inequality is due to Cauchy inequality and general H\"older inequality, the second inequality is due to the uniform $L^{\infty}(Q_T)$ bound of $\tilde{u}^{(\varepsilon)}_{A,n}$ Gagliardo-Nirenberg inequality when dimension $n = 3$ and Young's equality. Hence, we have:
  \begin{multline}
    \sum_{i\in \{V,A,I,P\}} \frac{1}{2}\frac{d}{dt}\|\tilde{u}^{(\varepsilon)}_{i,n} - \tilde{u}^{(\varepsilon)}_i\|^2_{L^2(\Omega)} + \sum_{i\in \{A,I,P\}}d_i^{(0)}\|\nabla (\tilde{u}^{(\varepsilon)}_{i,n} - \tilde{u}^{(\varepsilon)}_i)\|^2_{L^2(\Omega)}\\
    \leqslant C \|\tilde{u}_{C,n}^{(\varepsilon)}-\tilde{u}_{C}^{(\varepsilon)}\|^2_{L^2(Q_T)} +  \sum_{i\in \{V,A,I,P\}}C\|\tilde{u}^{(\varepsilon)}_{i,n} - \tilde{u}^{(\varepsilon)}_i\|^2_{L^2(\Omega)},
  \end{multline}
    by Gr\"onwall's inequality, we get estimate~\eqref{continuous:v2:uc}.
\end{claimproof}
Proceed on showing the continuity of $M_5$, for any $n\in \N$, and define the differences $\bar{u}_C:= u^{(\varepsilon)}_{C,n}-u^{(\varepsilon)}_{C}$, $\bar{\tilde{u}}_C:= \tilde{u}^{(\varepsilon)}_{C,n}-\tilde{u}^{(\varepsilon)}_{C}$, $\bar{u}_N:= u^{(\varepsilon)}_{N,n}-u^{(\varepsilon)}_{N}$ and $\bar{\tilde{u}}_N:= \tilde{u}^{(\varepsilon)}_{N,n}-\tilde{u}^{(\varepsilon)}_{N}$. Apply the $L^2$ energy estimate on equations~\eqref{Schaefer:equ1} and~\eqref{Schaefer:equ2} for $\bar{u}_C$ and $\bar{u}_N$:
\begin{flalign}
    &\frac{1}{2}\frac{d}{dt} \int_{\Omega}\bar{u}_C^2 dx + d^{(0)}_C\int_{\Omega} |\nabla \bar{u}_C|^2 dx\notag&&\\
    =&-\int_{\Omega}\nabla\cdot (\chi_{11} (B(\tilde{u}^{(\varepsilon)}_{C,n})-B(\tilde{u}^{(\varepsilon)}_{C}))\nabla u^{(\varepsilon)}_{N} + B(\tilde{u}^{(\varepsilon)}_{C,n})\nabla \bar{u}_N) \bar{u}_C dx\notag\\
    &-\int_{\Omega}\nabla\cdot ((B(\tilde{u}^{(\varepsilon)}_{C,n})-B(\tilde{u}^{(\varepsilon)}_{C}))\nabla \tilde{g}^{(\varepsilon)}_{1,n}+B(u^{(\varepsilon)}_{C}) \nabla (\tilde{g}^{(\varepsilon)}_{1,n} - \tilde{g}^{(\varepsilon)}_{1}))\bar{u}_C dx\notag \\ 
    & + \int_{\Omega}\alpha_{11}(\bar{\tilde{u}}_{N} \frac{u^{(\varepsilon)}_{C,n}}{1+\varepsilon u^{(\varepsilon)}_{C,n}} +\tilde{u}^{(\varepsilon)}_{N} (\frac{u^{(\varepsilon)}_{C,n}}{1+\varepsilon u^{(\varepsilon)}_{C,n}} -\frac{u^{(\varepsilon)}_{C}}{1+\varepsilon u^{(\varepsilon)}_{C}}))\bar{u}_C+ (\mu_C -\delta_C) \bar{u}_C^2 -\frac{\mu_C}{K_C}(u^{(\varepsilon)}_{C,n}+u^{(\varepsilon)}_{C})\bar{u}_C^2 dx, \notag \\
    \leqslant & C\int_{\Omega} (B(\tilde{u}^{(\varepsilon)}_{C,n})-B(\tilde{u}^{(\varepsilon)}_{C}))^2 |\nabla u_N^{(\varepsilon)}|^2dx +  \eta \int_{\Omega} |\nabla \bar{u}_C|^2 dx + \int_{\Omega}\chi_{11}\nabla \bar{u}_N\cdot \nabla\bar{u}_C dx \notag\\
    & + C\int_{\Omega}(B(\tilde{u}^{(\varepsilon)}_{C,n})-B(\tilde{u}^{(\varepsilon)}_{C}))^2 |\nabla \tilde{g}^{(\varepsilon)}_{1,n}|^2 dx+ C\int_{\Omega}|\nabla (\tilde{g}^{(\varepsilon)}_{1,n} - \tilde{g}^{(\varepsilon)}_{1})|^2 dx \notag\\
    &+ C(\int_{\Omega} \bar{\tilde{u}}_N^2 dx + \int_{\Omega} \bar{u}_C^2 dx) + C \int_{\Omega}(\tilde{u}^{(\varepsilon)}_{N})^2 (\frac{u^{(\varepsilon)}_{C,n}}{1+\varepsilon u^{(\varepsilon)}_{C,n}} -\frac{u^{(\varepsilon)}_{C}}{1+\varepsilon u^{(\varepsilon)}_{C}})^2 dx \label{convergence:guild:uc}
\end{flalign}
where the first inequality is due to Cauchy inequality and non-negativity of $u^{(\varepsilon)}_{C,n}$ and $u^{(\varepsilon)}_{C}$ where $\eta$ is a small constant which will be defined later. Similar to estimate~\eqref{convergence:guild:uc}, $\bar{u}_N$ has the following estimate:
\begin{flalign}
    &\frac{1}{2}\frac{d}{dt}\int_{\Omega}\bar{u}_N^2 dx + d^{(0)}_C\int_{\Omega} |\nabla \bar{u}_N|^2 dx\notag\\
    \leqslant&  C\int_{\Omega} (B(\tilde{u}^{(\varepsilon)}_{N,n})-B(\tilde{u}^{(\varepsilon)}_{N}))^2 |\nabla u_C^{(\varepsilon)}|^2dx +  \eta \int_{\Omega} |\nabla \bar{u}_N|^2 dx + \int_{\Omega}\chi_{21}\nabla \bar{u}_C\cdot \nabla\bar{u}_N dx \notag\\
     &+ C\int_{\Omega}(B(\tilde{u}^{(\varepsilon)}_{N,n})-B(\tilde{u}^{(\varepsilon)}_{N}))^2 |\nabla \tilde{g}^{(\varepsilon)}_{2,n}|^2 dx+ C\int_{\Omega}|\nabla (\tilde{g}^{(\varepsilon)}_{2,n} - \tilde{g}^{(\varepsilon)}_{2})|^2 dx \notag\\
    &+ C(\int_{\Omega} \bar{\tilde{u}}_N^2 dx + \int_{\Omega} \bar{u}_N^2 dx) + C \int_{\Omega}(\tilde{u}^{(\varepsilon)}_{N})^2 (\frac{u^{(\varepsilon)}_{C,n}}{1+\varepsilon u^{(\varepsilon)}_{C,n}} -\frac{u^{(\varepsilon)}_{C}}{1+\varepsilon u^{(\varepsilon)}_{C}})^2 dx,&& \label{convergence:guild:un}
\end{flalign}
Summing up estimates~\eqref{convergence:guild:uc} and~\eqref{convergence:guild:un} and applying Gr\"onwall's inequality, with $\chi_{11}$ and $\chi_{21}$ satisfying $(\chi_{11}+\chi_{21})^2< 4 d_C^{(0)}d_N^{(0)}$ and $\gamma$ in inequality~\eqref{const:gamma}, we pick $\eta = \frac{1}{2\gamma}$ we have the following estimate:
\begin{flalign}
    &\sup_{0<t<T}\frac{1}{2}\int_{\Omega}\bar{u}_C^2 +\bar{u}_N^2 dx + \frac{d^{(0)}_C}{2\gamma}\int_{Q_T} |\nabla \bar{u}_C|^2 dx+ \frac{d^{(0)}_N}{2\gamma}\int_{Q_T} |\nabla \bar{u}_N|^2 dx&&\notag\\
    \leqslant &C\int_{Q_T} (B(\tilde{u}^{(\varepsilon)}_{C,n})-B(\tilde{u}^{(\varepsilon)}_{C}))^2 |\nabla u_N^{(\varepsilon)}|^2 +C\int_{Q_T} (B(\tilde{u}^{(\varepsilon)}_{N,n})-B(\tilde{u}^{(\varepsilon)}_{N}))^2 |\nabla u_C^{(\varepsilon)}|^2 dx\notag\\
    & + C\int_{Q_T}(B(\tilde{u}^{(\varepsilon)}_{C,n})-B(\tilde{u}^{(\varepsilon)}_{C}))^2 |\nabla \tilde{g}^{(\varepsilon)}_{1,n}|^2 dx + C\int_{Q_T}(B(\tilde{u}^{(\varepsilon)}_{N,n})-B(\tilde{u}^{(\varepsilon)}_{N}))^2 |\nabla \tilde{g}^{(\varepsilon)}_{2,n}|^2 dx  \notag\\
    &+ C \int_{Q_T}(\tilde{u}^{(\varepsilon)}_{N})^2 (\frac{u^{(\varepsilon)}_{C,n}}{1+\varepsilon u^{(\varepsilon)}_{C,n}} -\frac{u^{(\varepsilon)}_{C}}{1+\varepsilon u^{(\varepsilon)}_{C}})^2 dx  \notag\\
    &+ C\int_{Q_T}|\nabla (\tilde{g}^{(\varepsilon)}_{1,n} - \tilde{g}^{(\varepsilon)}_{1})|^2 dx+ C\int_{Q_T}|\nabla (\tilde{g}^{(\varepsilon)}_{2,n} - \tilde{g}^{(\varepsilon)}_{2})|^2 dx + C \int_{Q_T} \bar{\tilde{u}}_N^2 dx.\label{ultimate:converge}
\end{flalign}
Since $\tilde{u}_{C,n}^{(\varepsilon)}\rightarrow \tilde{u}_{C}^{(\varepsilon)}$ and $\tilde{u}_{N,n}^{(\varepsilon)}\rightarrow \tilde{u}_{N}^{(\varepsilon)}$ are convergent in $L^2(Q_T)$ sense, 
\begin{itemize}[label=$\diamond$]
    \item the first five terms on the right-hand side of estimate~\eqref{ultimate:converge} are convergent to zero:\\
    $B(z)$ and $\frac{z}{1+\varepsilon z}$ are bounded, Lipschitz continuous functions about $z$. Hence $B(\tilde{u}^{(\varepsilon)}_{C,n})$, $B(\tilde{u}^{(\varepsilon)}_{N,n})$, $\frac{u^{(\varepsilon)}_{C,n}}{1+\varepsilon u^{(\varepsilon)}_{C,n}}$ converge to $B(\tilde{u}^{(\varepsilon)}_{C})$, $B(\tilde{u}^{(\varepsilon)}_{N})$, $\frac{u^{(\varepsilon)}_{C}}{1+\varepsilon u^{(\varepsilon)}_{C}}$ strongly in $L^2(Q_T)$, respectively. Then any subsequence of those sequences must be convergent in $L^2(Q_T)$ sense. Also, there exists a subsequence of $L^2(Q_T)$ convergent sequence, which is convergent almost everywhere. Lastly, due to $\nabla u_C^{(\varepsilon)}$, $\nabla u_N^{(\varepsilon)}$, $\nabla g_{1,n}^{(\varepsilon)}$,$\nabla g_{2,n}^{(\varepsilon)}$ are $(L^2(Q_T))^3$, $\tilde{u}_N^{(\varepsilon)}$ is $L^2(Q_T)$ and the upper bounds of $B(z)$ and $\frac{z}{1+\varepsilon z}$, dominated convergent theorem yields the result;
    \item the sixth and seventh term is convergent to zero:\\
    By claim~\ref{continuity:claim}, we have shown this result.
\end{itemize}

Therefore, $(M_5[\tilde{u}^{(\varepsilon)}_{C,n}])_{N,n},\,(M_5[\tilde{u}^{(\varepsilon)}_{N,n}])_{n\in\N}$ is convergent in $V_2(Q_T)$ sense. By the continuous embedding of $V_2(Q_T)\hookrightarrow L^4(0,T;L^3(\Omega))\hookrightarrow L^3(Q_T)$ as it is shown in~\cref{v2:embedding}, then $M_5[\tilde{u}^{(\varepsilon)}_{C,n}],\,M_5[\tilde{u}^{(\varepsilon)}_{N,n}]$ is convergent in $L^3(Q_T)$ sense. This completes the proof of $M_5$ is continuous.

By Leray-Schauder fixed point theorem, we know there exists a fixed point on $Q_{\tau^*}$. Then by continuity argument and bootstrap argument, one can extend this result to any $T>0$, which solves the approximate system~\eqref{approx:system:uc} -~\eqref{eps:init}.

$\bullet$ \textbf{Existence of a solution of lemma~\ref{3d:lemma2}:}

Lastly, we shall let $\varepsilon\rightarrow 0$. By the virtue of estimates~\eqref{app:estimate:pos} -~\eqref{app:estimate:un}, we have the following convergent subsequence $\{u_C^{(\varepsilon_j)}\}_{j},\,\text{and}\,\{u_N^{(\varepsilon_j)}\}_{j}$, with $\varepsilon_j\rightarrow 0$ as $j\rightarrow \infty$, satisfying:
\[\begin{cases}
    u_C^{(\varepsilon_j)}\rightarrow u_C,\,u_N^{(\varepsilon_j)}\rightarrow u_N\,&\text{strongly in}\,L^2(Q_T)\,\text{sense};\\
    B(u_C^{(\varepsilon_j)}) \rightarrow B(u_C), \,B(u_N^{(\varepsilon_j)}) \rightarrow B(u_N)&\text{strongly in}\,L^2(Q_T)\,\text{sense};\\
    g_1^{(\varepsilon_j)} \rightarrow g_1,\,g_2^{(\varepsilon_j)} \rightarrow g_2&\text{strongly in}\,L^2(Q_T)\,\text{sense};\\
    \partial_t u_C^{(\varepsilon_j)}\rightharpoonup \partial_t u_C,\,\partial_t u_N^{(\varepsilon_j)}\rightharpoonup \partial_t u_N &\text{weakly in}\,L^2(0,T;H^*(\Omega))\,\text{sense};\\
    u_C^{(\varepsilon_j)}\rightharpoonup u_C,\,u_N^{(\varepsilon_j)}\rightharpoonup u_N &\text{weakly in}\,L^2(0,T;H^1(\Omega))\,\text{sense};\\
    u_C^{(\varepsilon_j)}\rightharpoonup u_C,\,u_N^{(\varepsilon_j)}\rightharpoonup u_N&\text{weakly in}\,L^3(Q_T)\,\text{sense};\\
    B(u_C^{(\varepsilon_j)})\nabla u_N^{(\varepsilon_j)}\rightharpoonup B(u_C) \nabla u_N &\text{weakly in}\,(L^2(Q_T))^3\,\text{sense};\\
    B(u_N^{(\varepsilon_j)})\nabla u_C^{(\varepsilon_j)}\rightharpoonup B(u_N) \nabla u_C &\text{weakly in}\,(L^2(Q_T))^3\,\text{sense};\\
    \frac{u_C^{(\varepsilon_j)}}{1+\varepsilon_j u_C^{(\varepsilon_j)}} \rightarrow u_C &\text{almost everywhere on}\,Q_T;\\
    \frac{u_C^{(\varepsilon_j)}}{1+\varepsilon_j u_C^{(\varepsilon_j)}}u_N^{(\varepsilon_j)} \rightharpoonup u_C u_N &\text{weakly in}\,L^2(Q_T)\,\text{sense};\\
\end{cases}\]
Hence, a weak solution of the form~\eqref{lemma52:weaksolution:un} is found.

This completes the proof of lemma~\ref{3d:lemma2}.
\end{proof}

Combining lemma~\ref{3d:lemma1} and lemma~\ref{3d:lemma2}, we proved the existence of the solution of the system~\eqref{equ:1} -~\eqref{equ:6} with boundary condition~\eqref{bc} and the initial condition~\eqref{ic} when spatial dimension $n=3$. This complete the proof of theorem~\ref{exist_3d}.

\section{Conclusion}
In this work, we extend a cancer invasion model to explicitly incorporate host normal cells, a modification that introduces cross-diffusion and results in a cross-diffusion hybrid differential equations system. The model integrates diffusion, chemotaxis, haptotaxis, recruitment by cancer cells, and the logistic growth and natural degradation of the normal cell population. A key feature of this system is that the diffusion coefficients for all components (except non-diffusive vitronectin) are dependent on both space and time. We prove the global existence and uniqueness of a solution for dimension $\sd\leqslant 2$ under mild and practical conditions on the given data. Notably, selecting an appropriate range for the constants $\chi_{11}$ and $\chi_{21}$ is essential to ensure ellipticity, while obtaining bounded solutions requires a technical restriction on these constants. This result shows the mathematical model is well-defined for dimension $\sd\leqslant 2$. The proof strategy for showing well-posedness can be applied to other parabolic and ODE hybrid model. 

For dimension $\sd=3$, we establish $V_2(Q_T)$ apriori estimate for all components and show the existence of a weaker solution of the model. However, some open questions remain, such as the uniqueness of the solution when dimension $\sd=3$. These questions require a further investigation.
\newpage
\section{Appendix}
\subsection{Proof of \cref{holderlemma}}
\begin{proof}
    This proof will be combined by two parts. First we will show the local estimate for some $0<\tau\leqslant T$. Then using the principle of induction, we extend this estimate to $T$.

    Since we know that the solution $u$ is $L^{\infty}(Q_T)$ and $f,g$ are in $C^{\alpha,\frac{\alpha}{2}}(\bar{Q}_T)$, there exists $M,M_f,M_g>0$ such that $\|u\|_{L^{\infty}(Q_T)}\leqslant M,\|f\|_{L^{\infty}(Q_T)}\leqslant M_f,\|g\|_{L^{\infty}(Q_T)} \leqslant M_g$.

    \textbf{Part 1: $u\in C^{\alpha,\frac{\alpha}{2}}(\bar{Q}_\tau)$, for some $0<\tau\leqslant T$.}
    \begin{claim}\label{localholder}
        $\forall u_0\in C^{\alpha}(\bar{\Omega})$, there exists $0<\tau\leqslant T$, such that $\|u\|_{C^{\alpha,\frac{\alpha}{2}}(\bar{Q}_{\tau})}\leqslant C$, where $C$ is independent of $\tau$.
    \end{claim}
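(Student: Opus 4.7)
The plan is to treat equation~\eqref{holdereq} as a parameterized family of ODEs in $t$ (with $x$ acting only as a parameter, since no spatial derivatives of $u$ appear), and derive H\"older regularity in each variable separately. The $L^\infty$ bound $\|u\|_{L^\infty(Q_T)} \leqslant M$ is given as a hypothesis, and $M_f := \|f\|_{L^\infty(Q_T)}$, $M_g := \|g\|_{L^\infty(Q_T)}$ are finite since $f,g \in C^{\alpha,\alpha/2}(\bar{Q}_T)$.

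First I would establish time regularity. For fixed $x \in \Omega$, integrating~\eqref{holdereq} between $t_1 < t_2$ yields
\[
|u(x,t_2) - u(x,t_1)| \leqslant \int_{t_1}^{t_2}\bigl(a_1 M_f M + a_2 M_g + a_3 M^2\bigr)\,ds \leqslant C(t_2 - t_1),
\]
which gives Lipschitz (hence $\alpha/2$-H\"older for $\tau \leqslant 1$) control in time uniformly in $x$ and in $\tau \leqslant T$, with constant depending only on the known $L^\infty$ bounds.

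Next, for spatial H\"older control, I would introduce the difference $w(t) := u(x_1,t) - u(x_2,t)$ for two points $x_1,x_2 \in \bar{\Omega}$. Subtracting the equations and using the identity $u(x_1,t)^2 - u(x_2,t)^2 = (u(x_1,t)+u(x_2,t))w(t)$, I obtain the linear ODE
\[
\frac{d w}{dt} = \bigl[a_1 f(x_1,t) - a_3(u(x_1,t)+u(x_2,t))\bigr]w + a_1\bigl(f(x_1,t)-f(x_2,t)\bigr)u(x_2,t) + a_2\bigl(g(x_1,t)-g(x_2,t)\bigr),
\]
with initial value $w(0) = u_0(x_1) - u_0(x_2)$. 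The bracket coefficient is bounded by some $K$ depending only on $M, M_f, a_1, a_3$, while the forcing is bounded by $C(M, M_g)(\|f\|_{C^{\alpha,\alpha/2}} + \|g\|_{C^{\alpha,\alpha/2}})|x_1-x_2|^\alpha$ and $|w(0)| \leqslant \|u_0\|_{C^\alpha(\bar\Omega)}|x_1-x_2|^\alpha$. Applying Gr\"onwall's inequality gives
\[
|w(t)| \leqslant e^{Kt}\bigl(\|u_0\|_{C^\alpha(\bar\Omega)} + C t\bigr)|x_1-x_2|^\alpha.
\]
Choosing $\tau$ small enough that $e^{K\tau}(\|u_0\|_{C^\alpha(\bar\Omega)} + C\tau) \leqslant 2\|u_0\|_{C^\alpha(\bar\Omega)} + 1$, the spatial H\"older seminorm on $\bar Q_\tau$ is bounded by a constant depending only on $\|u_0\|_{C^\alpha}$, $\|f\|_{C^{\alpha,\alpha/2}}$, $\|g\|_{C^{\alpha,\alpha/2}}$, $M$, and $a_i$, and crucially \emph{not} on $\tau$ itself.

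Combining the two estimates with the definition of the parabolic H\"older seminorm yields $\|u\|_{C^{\alpha,\alpha/2}(\bar Q_\tau)} \leqslant C$ independently of $\tau \in (0,\tau_0]$ for some $\tau_0$ depending only on the data. The second part of the lemma (bootstrapping from $\tau_0$ to $T$) will then follow by the standard induction argument alluded to in Remark at~\cref{bootstrap:arg}: on each slab $[k\tau_0,(k+1)\tau_0]$, the estimate just obtained applies with initial data $u(\cdot,k\tau_0) \in C^\alpha(\bar\Omega)$ (already H\"older from the previous step), and since the constant depends only on the uniform $L^\infty$ bound of $u$ rather than its H\"older norm at the previous slab's endpoint, the H\"older constants do not accumulate uncontrollably. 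The main technical point is ensuring the constant $C$ in the spatial estimate truly depends only on $\|u_0\|_{C^\alpha}$ and not on $\tau$; this is precisely what forces the short-time restriction and motivates the inductive extension.
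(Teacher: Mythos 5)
Your proposal is correct, but it follows a genuinely different route from the paper. The paper estimates the full parabolic difference quotient at once: it integrates the equation in time, splits $u(x,t)-u(y,s)$ into four terms, and bounds the terms containing the unknown seminorm by $\tau(2M+M_f)\|u\|_{C^{\alpha,\frac{\alpha}{2}}(\bar{Q}_\tau)}$, so the smallness of $\tau$ is used to \emph{absorb} the H\"older norm of $u$ into the left-hand side (an argument that tacitly presumes this norm is finite a priori). You instead decouple the two variables: Lipschitz control in $t$ comes for free from the $L^\infty$ bounds on the right-hand side, and for the spatial modulus you observe that $w(t)=u(x_1,t)-u(x_2,t)$ solves a \emph{linear} ODE whose coefficient is bounded by $K=a_1M_f+2a_3M$ and whose forcing and initial datum are $O(|x_1-x_2|^\alpha)$, so Gr\"onwall gives $|w(t)|\leqslant e^{Kt}\left(\|u_0\|_{C^{\alpha}(\bar{\Omega})}+Ct\right)|x_1-x_2|^{\alpha}$ directly; the triangle inequality then recombines the two estimates into the mixed seminorm. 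What your approach buys: there is no self-referential absorption step, and the bound in fact holds on all of $[0,T]$ with constant $e^{KT}(\|u_0\|_{C^{\alpha}(\bar{\Omega})}+CT)$, so the short-time restriction and the subsequent bootstrap of~\cref{bootstrap:arg} become optional rather than essential. What the paper's version buys is a constant with no exponential-in-time factor and a proof pattern (smallness of $\tau$ plus absorption) that is reused verbatim in the contraction and uniqueness arguments elsewhere in the paper. One minor point to tidy up: since $u$ is only an $L^\infty$ (weak) solution, you should state the ODE for $w$ in integrated form, $w(t)=w(0)+\int_0^t[\cdots]\,ds$, and apply the integral version of Gr\"onwall; this changes nothing in the estimate.
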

    \begin{claimproof}
        Integrate \eqref{holdereq} over $(0,t)$, we have
    \begin{equation}
        u(x,t) - u_0(x) = \int_0^t a_1 f (x,\xi) u(x,\xi) + a_2 g(x,\xi) - a_3 u(x,\xi)^2 d \xi,\;\forall x\in \Omega.
    \end{equation}

    Consider arbitrary two distinct points $(x,t),(y,s) \in \bar{Q}_\tau:= \bar{\Omega}\times [0,\tau]$ for some $\tau>0$, then taking the difference of $u(x,t)$ and $u(y,s)$, we have:
    \begin{align}
        u(x,t)-u(y,s) &= u_0(x)-u_0(y) + \int_0^t a_1 f (x,\xi) u(x,\xi) + a_2 g(x,\xi) - a_3 u(x,\xi)^2 d \xi \notag\\
        & - \int_0^s a_1 f (y,\xi) u(y,\xi) + a_2 g(y,\xi) - a_3 u(y,\xi)^2 d \xi \notag\\
        & = u_0(x)-u_0(y) +  a_1 \underbrace{\int_0^t (f (x,\xi) u(x,\xi)- f (y,\xi) u(y,\xi)) d\xi}_{(\ref{holderlong}.1)} \notag\\
        & + a_2 \underbrace{\int_0^t g(x,\xi)-g(y,\xi) d\xi}_{(\ref{holderlong}.2)}  - a_3 \underbrace{\int_0^t u(x,\xi)^2 - u(y,\xi)^2  d \xi}_{(\ref{holderlong}.3)} \notag\\
        & +\underbrace{\int_s^t a_1 f (y,\xi) u(y,\xi) + a_2 g(y,\xi) - a_3 u(y,\xi)^2 d \xi}_{(\ref{holderlong}.4)} \label{holderlong}
    \end{align}
    Then we state the estimate for each term in equation \eqref{holderlong} divided by $\delta := \max\{|x-y|^{\alpha},|t-s|^{\frac{\alpha}{2}}\}$:
    \begin{flalign}
        \frac{(\ref{holderlong}.1)}{\delta} &\leqslant \int_0^t u(x,\xi)\frac{(f(x,\xi)-f(y,\xi))}{\max\{|x-y|^{\alpha},|t-s|^{\frac{\alpha}{2}}\}}d\xi + \int_0^t f(y,\xi)\frac{(u(x,\xi)-u(y,\xi))}{\max\{|x-y|^{\alpha},|t-s|^{\frac{\alpha}{2}}\}} d\xi \notag &&\\
        &\leqslant \tau M \|f\|_{C^{\alpha,\frac{\alpha}{2}}(\bar{Q}_\tau)} + \tau M_f \|u\|_{C^{\alpha,\frac{\alpha}{2}}(\bar{Q}_\tau)};
    \end{flalign}

    Similarly,
    \begin{flalign}
        \frac{(\ref{holderlong}.2)}{\delta}&\leqslant\tau M_g \|g\|_{C^{\alpha,\frac{\alpha}{2}}(\bar{Q}_\tau)}; && \notag \\
        \frac{(\ref{holderlong}.3)}{\delta}&\leqslant \int_{0}^t (u(x,\xi)+u(y,\xi)) \frac{(u(x,\xi)-u(y,\xi))}{\max\{|x-y|^{\alpha},|t-s|^{\frac{\alpha}{2}}\}} d\xi\leqslant 2\tau M \|u\|_{C^{\alpha,\frac{\alpha}{2}}(\bar{Q}_\tau)}; \notag
    \end{flalign}

    for the last term,
    \begin{flalign}
        \frac{(\ref{holderlong}.4)}{\delta}&\leqslant (a_1M_f M + a_2M_g +a_3 M^2) \frac{(t-s)}{\max\{|x-y|^{\alpha},|t-s|^{\frac{\alpha}{2}}\}}\notag\\
        &\leqslant (a_1M_f M + a_2M_g +a_3 M^2) (t-s)^{1-\frac{\alpha}{2}}&&\notag\\
        &\leqslant (a_1M_f M + a_2M_g +a_3 M^2) \tau^{1-\frac{\alpha}{2}}\leqslant C,\notag
    \end{flalign} 
    as long as $\tau\leqslant T$.

    Hence, combining all previous estimates on each term, we have:
    \begin{align}
        \frac{(u(x,t)-u(y,s))}{\max\{|x-y|^{\alpha},|t-s|^{\frac{\alpha}{2}}\}} &\leqslant 2\tau M \|u\|_{C^{\alpha,\frac{\alpha}{2}}(\bar{Q}_\tau)} + \tau M_f \|u\|_{C^{\alpha,\frac{\alpha}{2}}(\bar{Q}_\tau)}  + \|u_0\|_{C^{\alpha}(\bar{\Omega})} \notag \\
        &+\tau M \|f\|_{C^{\alpha,\frac{\alpha}{2}}(\bar{Q}_\tau)}+\tau M_g \|g\|_{C^{\alpha,\frac{\alpha}{2}}(\bar{Q}_\tau)} +C\notag\\
        &\leqslant \tau (2M+M_f)\|u\|_{C^{\alpha,\frac{\alpha}{2}}(\bar{Q}_\tau)} +C
    \end{align}

    Lastly, we take the supremum by running over arbitrary distinct two points $(x,t), (y,s) \in Q_\tau$ and pick $\tau = \min\{\frac{1}{2}\frac{1}{(2M+M_f)},T\}$, i.e. we have 
    \[\|u\|_{C^{\alpha,\frac{\alpha}{2}}(\bar{Q}_\tau)} = \sup_{(x,t)\neq(y,s)\in \bar{Q}_\tau}\frac{(u(x,t)-u(y,s))}{\max\{|x-y|^{\alpha},|t-s|^{\frac{\alpha}{2}}\}}\leqslant 2C\]
    \end{claimproof}
    \begin{claim} (Bootstrap argument)\label{bootstrap:arg}
        Given $u(\cdot,0) = u_0\in C^{\alpha}(\bar{\Omega})$ and $u$ satisfies equation~\eqref{holdereq}, suppose $T^*:=\sup\{\tau: \|u\|_{C^{\alpha,\frac{\alpha}{2}}(\bar{Q}_\tau)}\leqslant C\}$, then $T^* \equiv T$.
    \end{claim}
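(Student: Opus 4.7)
\textbf{Proof proposal for Claim \ref{bootstrap:arg}.}

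The plan is a standard contradiction-plus-continuation argument, relying on the key observation that the local time step $\tau$ produced in Claim \ref{localholder} depends only on the global $L^\infty$ bounds $M, M_f$ (which are uniform on $\bar Q_T$), and not on the specific ``initial time'' at which the local estimate is invoked. Thus the step $\tau = \min\{\tfrac{1}{2(2M+M_f)}, T\}$ serves as a \emph{uniform} continuation step.

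Suppose for contradiction that $T^* < T$. Since $\|u\|_{C^{\alpha,\alpha/2}(\bar Q_{\tau'})} \leqslant C$ for every $\tau' < T^*$, one has in particular the spatial trace estimate $u(\cdot, T^*) \in C^\alpha(\bar\Omega)$ with $\|u(\cdot, T^*)\|_{C^\alpha(\bar\Omega)} \leqslant C$. Now I would restart the ODE \eqref{holdereq} at time $T^*$ using $u(\cdot, T^*)$ as the new initial datum. Since $f,g$ remain in $C^{\alpha,\alpha/2}(\bar{Q}_T)$ (in particular on $\bar\Omega \times [T^*, T]$), Claim \ref{localholder} applies verbatim on the shifted interval $[T^*, T^* + \tau]$ (intersected with $[0,T]$), yielding a bound $\|u\|_{C^{\alpha,\alpha/2}(\bar\Omega \times [T^*, T^*+\tau])} \leqslant C'$, for some $C'$ depending only on the known data and the (uniformly bounded) Hölder norm of $u(\cdot, T^*)$.

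The next step is to patch the two Hölder estimates across the junction time $T^*$. For arbitrary $(x,t_1)$ with $t_1 \leqslant T^*$ and $(y,t_2)$ with $t_2 \geqslant T^*$, I would use the triangle inequality
\[
|u(x,t_1) - u(y,t_2)| \leqslant |u(x,t_1) - u(x,T^*)| + |u(x,T^*) - u(y,T^*)| + |u(y,T^*) - u(y,t_2)|,
\]
together with the elementary inequality $a^{\alpha/2}+b^{\alpha/2}\leqslant 2(a+b)^{\alpha/2}$ (valid for $a,b\geqslant 0$, $\alpha\leqslant 2$), to conclude
\[
\|u\|_{C^{\alpha,\alpha/2}(\bar Q_{T^*+\tau})} \leqslant 2\max\{C, C'\}.
\]
This contradicts the supremum definition of $T^*$, proving $T^* \geqslant T$. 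Combined with the trivial bound $T^* \leqslant T$, we obtain $T^* = T$.

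The main, and really only, obstacle is verifying that the local constant produced in Claim \ref{localholder} does not deteriorate as the starting time drifts toward $T$ — but this is exactly why the proof of that claim was careful to quantify $\tau$ purely through the \emph{global} bounds $M, M_f, M_g$ and through $\|u_0\|_{C^\alpha(\bar\Omega)}$: at each restart the effective ``initial'' Hölder norm $\|u(\cdot, T^*)\|_{C^\alpha(\bar\Omega)}$ is controlled by the Hölder bound on the previous step, and only finitely many steps (at most $\lceil T/\tau \rceil$) are required to cover $[0,T]$.
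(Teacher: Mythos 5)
Your proposal is correct and takes essentially the same route as the paper's own proof (its contradiction/continuation ``Option 2''): from the bound up to $T^*$ extract the trace $u(\cdot,T^*)\in C^{\alpha}(\bar{\Omega})$, re-apply Claim~\ref{localholder} with the step $\tau$ that depends only on the global $L^{\infty}$ bounds $M,M_f$, and contradict the maximality of $T^*$. Your extra details---restarting the ODE at $T^*$, patching the two H\"older estimates across the junction, and noting that only finitely many uniform steps are needed so the constant stays finite---merely make explicit what the paper leaves implicit.
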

    \begin{claimproof}
        Option 1: (Direct proof) By the principle of induction, we need to show that if $\|u\|_{C^{\alpha,\frac{\alpha}{2}}(\bar{\Omega}\times [0,\tau))}\leqslant C$, where $C$ is independent of $\tau$, then $\|u(\cdot,\tau)\|_{C^{\alpha}(\bar{\Omega})}\leqslant C $. 
        
        Notice that
        \begin{equation}
            \|u(\cdot,\tau)\|_{C^{\alpha}(\bar{\Omega})}\leqslant \|u\|_{C^{\alpha,\frac{\alpha}{2}}(\bar{\Omega}\times [0,\tau])} =\|u\|_{C^{\alpha,\frac{\alpha}{2}}(\bar{\Omega}\times [0,\tau))}\leqslant C
        \end{equation}
        Combined with claim~\ref{localholder} and given $u_0\in C^{\alpha}(\bar{\Omega})$, we can conclude $T^*=T$ by induction.

        Option 2: (Proof by contradiction) Suppose $T^* < T$, otherwise there is nothing needs to be proved. Notice that
        \begin{equation}
        \|u(\cdot,T^*)\|_{C^{\alpha}(\Omega)}\leqslant \|u\|_{C^{\alpha,\frac{\alpha}{2}}(\bar{Q}_{T^*})} = \|u\|_{C^{\alpha,\frac{\alpha}{2}}(\bar{\Omega}\times [0,T^*))}\leqslant C.
        \end{equation} 
        Hence, by claim~\ref{localholder}, we know there exists $0<\tau\leqslant T-T^*$, which contradicts to the maximality of $T^*$. So it is impossible to have $T^*<T$.
    \end{claimproof}
\end{proof}
\newpage
\printbibliography{}

\end{document}